\let\chooseClass1   %Applied Categorical Structures journal
\let\chooseClass2   %extarticle or article for ourselves
\let\chooseClass4   %article, 12pt, for arXiv
\journalname{Applied Categorical Structures}
\def\@seccntformat#1{\csname the#1\endcsname.\quad}
\renewcommand\section{\@startsection {section}{1}{\z@}%
                                   {-3.5ex \@plus -1ex \@minus -.2ex}%
                                   {2.3ex \@plus.2ex}%
                                   {\normalfont\large\bfseries}}
\renewcommand\subsection{\@startsection{subsection}{2}{\z@}%
                        {3.25ex plus 1ex minus .2ex}{-.5em}%
                        {\normalfont\normalsize\bfseries}}
\renewcommand\subsubsection{\@startsection{subsubsection}{3}{\z@}%
                        {3.25ex plus 1ex minus .2ex}{-.5em}%
                        {\normalfont\normalsize\it}}
\newtheoremstyle{boldhead}%     name
{\topsep}%                      abovespace
{\topsep}%                      belowspace
{\slshape}%                     bodyfont
{}%                             indentation=noindent
{\bfseries}%                    headfont
{.}%                            headpunctuation
{ }%                            headspace=interword space
{\thmname{#1}\thmnumber{ #2}\thmnote{ (#3)}}%   custom head specification
\newtheoremstyle{boldremark}%   name
{\topsep}%                      abovespace
{\topsep}%                      belowspace
{\upshape}%                     bodyfont
{}%                             indentation=noindent
{\bfseries}%                    headfont
{.}%                            headpunctuation
{ }%                            headspace=interword space
{\thmname{#1}\thmnumber{ #2}\thmnote{ (#3)}}%   custom head specification
\theoremstyle{boldhead}
\newtheorem{theorem}[subsection]{Theorem}
\newtheorem{lemma}[subsection]{Lemma}
\newtheorem{proposition}[subsection]{Proposition}
\theoremstyle{boldremark}
\newtheorem{definition}[subsection]{Definition}
\newtheorem{example}[subsection]{Example}
\newtheorem{remark}[subsection]{Remark}
\numberwithin{equation}{section}
\let\QED\qed
\newcommand\QED{}
\newcommand\1{{1\mkern-5mu {\mathrm I}}}
\newcommand\1{{\mathds 1}}
\newlength{\mylabelwidths}
\newenvironment{myitemize}{\begin{list}{}{%
\setlength{\labelwidth}{\mylabelwidths}%
\setlength{\leftmargin}{\mylabelwidths}\addtolength{\leftmargin}{0.5em}%
\setlength{\itemsep}{-0\baselineskip}}}%
{\end{list}}
\def\rhaha{\raise.20ex\hbox{$\rightharpoonup$}\kern-1em\lower.20ex\hbox{$\rightharpoondown$}}%.24ex
\def\lhaha{\raise.20ex\hbox{$\leftharpoonup$}\kern-1em\lower.20ex\hbox{$\leftharpoondown$}}%.24ex
\def\dhaha{\kern.01em\downharpoonleft\kern-.26em\downharpoonright\kern.01em}%-.24em.02em
\def\uhaha{\kern.01em\upharpoonleft\kern-.26em\upharpoonright\kern.01em}%-.24em.02em
\newcommand\DD{{\mathbb D}}
\newcommand\NN{{\mathbb N}}
\newcommand\ZZ{{\mathbb Z}}
\newcommand{\cc}{{\mathcal C}}
\newcommand{\cd}{{\mathcal D}}
\newcommand{\co}{{\mathcal O}}
\newcommand{\cv}{{\mathcal V}}
\newcommand{\cw}{{\mathcal W}}
\newcommand{\cz}{{\mathcal Z}}
\newcommand{\sfj}{{\mathsf j}}
\newcommand{\sfh}{{\mathsf h}}
\newcommand{\sfv}{{\mathsf v}}
\newcommand{\sfw}{{\mathsf w}}
\newcommand{\beps}{{\boldsymbol\eps}}
\newcommand{\bfeta}{{\boldsymbol\eta}}
\newcommand{\bv}{{\mathbf v}}
\newcommand{\bull}{{\scriptscriptstyle\bullet}}
\newcommand{\bw}{{\mathbf w}}
\newcommand{\sk}{{\mathsf{sk}}}
\newcommand{\tdt}{\otimes\dots\otimes}
\newcommand{\sS}[2]{\vphantom{#2}#1 #2}
\newcommand{\n}[1]{\nobreakdash-\hspace{0pt}}
\newcommand{\ainf}[1]{$A_\infty$\nobreakdash-\hspace{0pt}}
\newcommand{\lesscup}{\bigsqcup\mkern-16mu{\sss<}\mkern5mu}
\let\con\triangleright
\let\eps\varepsilon
\let\ge\geqslant
\let\kk\Bbbk
\let\le\leqslant
\let\mb\mathbf
\let\rto\xrightarrow
\let\sss\scriptstyle
\let\tens\otimes
\let\ttt\textstyle
\let\und\underline
\let\wt\widetilde
\newcommand\sff{{\mathsf f}}
\newcommand\sfg{{\mathsf g}}
\newcommand{\coder}{\textup{-coder}}
\newcommand{\modul}{\textup{-mod}}
\DeclareMathOperator\Ab{Ab}
\DeclareMathOperator{\Alg}{Alg}
\DeclareMathOperator{\Bbar}{Bar}
\DeclareMathOperator{\sfBar}{{\sf Bar}}
\DeclareMathOperator{\cAlg}{cAlg}
\DeclareMathOperator{\cCoalg}{cCoalg}
\DeclareMathOperator{\CCoalg}{CCoalg}
\DeclareMathOperator{\Cobar}{Cobar}
\DeclareMathOperator{\sfCobar}{{\sf Cobar}}
\DeclareMathOperator{\Coalg}{Coalg}
\DeclareMathOperator\dg{\mathbf{dg}}
\DeclareMathOperator\End{End}
\DeclareMathOperator\ev{ev}
\DeclareMathOperator\gr{\mathbf{gr}}
\DeclareMathOperator\id{id}
\DeclareMathOperator\inj{in}
\DeclareMathOperator\Ker{Ker}
\DeclareMathOperator{\chCoalg}{chCoalg}
\DeclareMathOperator{\fhCoalg}{fhCoalg}
\DeclareMathOperator{\hCoalg}{hCoalg}
\DeclareMathOperator\Ob{Ob}
\DeclareMathOperator{\oin}{\overline{\textup{in}}}
\newcommand{\op}{{\operatorname{op}}}
\DeclareMathOperator\opr{\overline{\textup{pr}}}
\DeclareMathOperator\pr{pr}
\DeclareMathOperator{\restr}{restr}
\DeclareMathOperator\Set{\mathcal Set}
\DeclareMathOperator{\Tw}{Tw}
\DeclareMathOperator{\ucAlg}{ucAlg}
\DeclareMathOperator{\ucCoalg}{ucCoalg}
\newcommand{\defref}[1]{Definition~\ref{#1}}
\newcommand{\exaref}[1]{Example~\ref{#1}}
\newcommand{\lemref}[1]{Lemma~\ref{#1}}
\newcommand{\propref}[1]{Proposition~\ref{#1}}
\newcommand{\remref}[1]{Remark~\ref{#1}}
\newcommand{\secref}[1]{Section~\ref{#1}}
\begin{document}
\title{Curved homotopy coalgebras}
\author{Volodymyr Lyubashenko}

\ifx\chooseClass1
\dedication{To the memory of Ukrainian mathematician Yuriy Victorovych Bodnarchuk}
\institute{V. Lyubashenko \at
              Institute of Mathematics NASU,
			  3 Tereshchenkivska st.,
			  Kyiv-4, 01601 MSP, Ukraine \\
              Tel.: +380-44-2357819,
              Fax: +380-44-2352010\\
              \email{lub@imath.kiev.ua}
}
\date{Received: date / Accepted: date}
\fi

\maketitle

\allowdisplaybreaks[1]

\begin{abstract}
We describe the category of homotopy coalgebras, concentrating on properties of relatively cofree homotopy coalgebras, morphisms and coderivations from an ordinary coalgebra to a relatively cofree homotopy coalgebra, morphisms and coderivations between coalgebras of latter type.
Cobar- and bar-constructions between counit-complemented curved coalgebras, unit-complemented curved algebras and curved homotopy coalgebras are described.
Using twisting cochains an adjunction between cobar- and bar-constructions is derived under additional assumptions.
\ifx\chooseClass1
\keywords{homotopy coalgebra \and curved algebra \and curved coalgebra}
\subclass{16T15 \and 18D10 \and 18D15}
\fi
\end{abstract}

\ifx\chooseClass1
	\else
\begin{flushright}
\it To the memory of Ukrainian mathematician Yuriy Victorovych Bodnarchuk
\end{flushright}
\fi

\subsection{Introduction}
In this article we discuss curved algebras in sufficiently general monoidal category $(\cv,\tens,\1)$ \textit{e.g.} of graded modules over a graded commutative ring.
Attempts to construct a dual notion lead us to curved homotopy coalgebras.
The notion of a homotopy coalgebra is studied in detail here.
It is a particular case of homotopy comonoid defined by Leinster \cite[Definition~2.2]{math/9912084}.
This concept under the name of quasi-comonoid is important also for Pridham \cite[Definition 1.4]{0908.0116}.

Definition of homotopy coalgebras is quite simple: it is a lax Monoidal functor $C$ from the category $\co_\sk^\op$ to $\cv$, where $\co_\sk$ is the category of finite ordered sets \(n=\{1<2<\dots<n\}\), $n\ge0$, and their non-decreasing maps.
Thus $\co_\sk$ coincides with the algebraist's $\Delta$.
A different notation following \cite{BesLyuMan-book} is chosen in order not to confuse with another $\Delta$, the category of non-empty finite ordered sets.
Certainly, an ordinary counital coassociative coalgebra gives rise to a homotopy coalgebra.

Existence of cobar- and bar-constructions depends on the chosen supply of coalgebras.
For conilpotent coalgebras these constructions are presented in \cite{Lyu-curved-coalgebras}.
The cofree conilpotent coalgebras are tensor objects \(XT=\coprod_{n\in\NN}X^{\tens n}\) equipped with the cut comultiplication.
The main example of a homotopy coalgebra \(C=X\tilde{T}\) uses \(C(1)=X\hat{T}=\prod_{n\in\NN}X^{\tens n}\).
This homotopy coalgebra is cofree with respect to ordinary coalgebras in $\cv$.
It is the main ingredient of curved homotopy coalgebras defined in this article.

Curved (dg-)algebras and curved (dg-)coalgebras were defined by Positselski over a field \cite{0905.2621} and with some topology over a local ring \cite{1202.2697}.
In order to make his cobar- and bar-constructions work in $\cv$ we assume that the image of the unit \(\eta:\1\rMono A\) of an algebra admits a direct complement in $\cv$, similarly, the kernel of the counit \(\eps:C\rEpi \1\) admits a direct complement in $\cv$.
Choosing one of these complements we obtain unit-complemented algebras and counit-complemented coalgebras.
We provide cobar-construction as a functor from counit-complemented curved coalgebras to unit-complemented curved algebras and bar-construction as a functor from unit-complemented curved algebras to curved homotopy coalgebras.
The latter target is chosen in order that an analogue of adjunction holds true.
Namely, two bimodules over the categories of curved coalgebras and curved algebras are isomorphic to the bimodule of twisting cochains.
Using additional grading it is possible for some class of coalgebras such that \(XT\simeq X\tilde{T}\) to redefine the bar-construction as a functor to ordinary (not homotopy) curved coalgebras, so that cobar- and bar-constructions were adjoint to each other.

\subsection{Conventions}
There are two reasons to write down an operator (mapping, functor) on the right of its argument.
First of all, some of our mappings are homogeneous of certain degree and the Koszul rule does not allow arbitrary placing of symbols of certain degree requiring a sign.
We use right internal homs as in \ainf-category setting of \cite{BesLyuMan-book}.
In particular, composition of two morphisms \(f:X\to Y\) and \(g:Y\to Z\) is denoted \(fg=f\cdot g\).
The tensor product of two morphisms $f$, $g$ of graded modules of degree $\deg f$, $\deg g$ acts on homogeneous elements as
\[ (x\tens y).(f\tens g) =(-1)^{\deg y\cdot\deg f} xf\tens yg =(-1)^{y\cdot f} xf\tens yg.
\]

Secondly, the shift or translation functor $[n]$ taking a graded module $M$ to itself with the shifted grading $M[n]$ is usually written on the right.
Among the two possible ways to write down the tensor module, $TM$ and $MT$, the latter is preferable because it is clear what mean $M[n]T$ and $MT[n]$, however ambiguous $TM[n]$ requires additional parentheses.
Besides we do not follow the `operator on the right' rule strictly and expressions similar to $f(x)$ occur in the article as well.

Note that Monoidal categories and functors \cite[Definitions 2.5, 2.6]{BesLyuMan-book} that we speak about in this article are the same as unbiased monoidal categories and functors \cite[Definition~3.1.1]{math.CT/0305049}.
It is safe to view Monoidal categories as strongly or even strictly monoidal.
In the terminology `lax/colax Monoidal categories and functors' we also follow \cite[Definitions 2.5, 2.6]{BesLyuMan-book}.
Discrepancy between these terms and other existing terminology (oplax) should not confuse the reader, for we indicate the direction of structure morphisms explicitly.
\textit{E.g.} our colax Monoidal category is identified with a lax monoidal category in the sense of Leinster \cite[Definition~3.1.1]{math.CT/0305049}.

The set of natural numbers $\NN$ consists of non-negative integers.
Each $n\in\NN$ can be viewed as the standard set of $n$ elements which is chosen to be \(\{1,2,\dots,n\}\).

\subsection{The ground category}
In this article $\cv$ denotes a closed symmetric Monoidal additive category with countable coproducts and products.
We assume that $\cv$ is idempotent-\hspace{0pt}split (Karoubian) and the tensor product preserves countable coproducts.
For some results we make a weaker assumption: \((\cv,\tens,\lambda)\) is a colax Monoidal category, defined as an opposite to a lax Monoidal category \((\cv^\op,\tens,\lambda^\op)\), \textit{cf.} \cite[Definition~2.5]{BesLyuMan-book}.
For the sake of convenience we use the structure morphisms \(\lambda^\phi:\tens^{j\in J}\tens^{i\in\phi^{-1}j}X_i\to\tens^{i\in I}X_i\), \(\phi:I\to J\in\co_\sk\), for colax Monoidal categories, assuming them invertible in Monoidal case.

Define a symmetric strict monoidal category $\cz$ with \(\Ob\cz=\ZZ$.
The sets of morphisms $\cz(m,n)$ are empty if $m\ne n$ and \(\cz(n,n)=\mu_2\overset{\text{def}}=\{1,-1\}\) is the group of two elements for $n\in\ZZ$.
The tensor product on objects is the addition, the tensor product on morphisms \(\tens:\cz(a,a)\times\cz(b,b)\to\cz(a+b,a+b)\) is the multiplication \(\mu_2\times\mu_2\to\mu_2\) in the group $\mu_2$.
Finally, the symmetry is chosen as
\[ c_{a,b} =(-1)^{ab} \in \cz(a+b,b+a), \qquad a,b\in\ZZ.
\]

We assume given a symmetric monoidal functor \(\1[-]:\cz\to\cv\), \(n\mapsto\1[n]\).
For simplicity we suppose that this monoidal functor is strict.
In particular \(\1[0]=\1\) is the unit object of $\cv$.
We require that \(-1\in\cz(n,n)\) were represented by $-\id_{\1[n]}$.
Let us derive some consequences of this structure.

First of all, there are functors \([n]=\_\tens\1[n]:\cv\to\cv\), \(X\mapsto X[n]=X\tens\1[n]\), $n\in\ZZ$.
Together they form a translation structure \cite[Definition~13.2]{BesLyuMan-book}, that is, a monoidal functor
\[ [-]: \ZZ \rMono \cz \rTTo^{\1[-]} \cv \rTTo^{R^\tens} \End\cv,
\]
where $\ZZ$ is the discrete monoidal subcategory of $\cz$ with the set of objects $\ZZ$, \(X(YR^\tens)=X\tens Y\).
Almost the same notion is called a weak action of the group $\ZZ$ on a category $\cv$ \cite[D\'efinition~1.2.2]{MR1453167}.
For the sake of simplicity we act as if the monoidal functor $[-]$ were strict.

Let us use the closedness of $\cv$.
The identity map $\id_{X[n]}$ admits a presentation
\[ \id_{X[n]} =\bigl\langle X\tens\1[n] \rTTo^{1\tens\sigma^n} X\tens\und\cv(X,X[n]) \rTTo^\ev X[n] \bigr\rangle
\]
for a unique morphism
\[ \sigma^n =\sigma_X^n: \1[n] \to \und\cv(X,X[n]) \in \cv.
\]
One easily shows that for all $a,b\in\ZZ$
\ifx\chooseClass1
\begin{multline*}
\bigl\langle \1[a+b] =\1[a]\tens\1[b] \rTTo^{\sigma_X^a\tens\sigma_{X[a]}^b} \und\cv(X,X[a])\tens\und\cv(X[a],X[a+b])
\\
\rto m \und\cv(X,X[a+b]) \bigr\rangle =\sigma_X^{a+b}.
\end{multline*}
	\else
\[ \bigl\langle \1[a+b] =\1[a]\tens\1[b] \rTTo^{\sigma_X^a\tens\sigma_{X[a]}^b} \und\cv(X,X[a])\tens\und\cv(X[a],X[a+b]) \rto m \und\cv(X,X[a+b]) \bigr\rangle =\sigma_X^{a+b}.
\]
\fi
Notice also that $\sigma_X^n$ is dinatural in $X$: for all \(f:X\to Y\in\cv\)
\begin{diagram}[LaTeXeqno]
\1[n] &\rTTo^{\sigma_X^n} &\und\cv(X,X[n])
\\
\dTTo<{\sigma_Y^n} &= &\dTTo>{\und\cv(X,f[n])}
\\
\und\cv(Y,Y[n]) &\rTTo^{\und\cv(f,Y[n])} &\und\cv(X,Y[n])
\label{dia-sigma-dinatural}
\end{diagram}

Equip the category $\Ab^\ZZ$ of graded abelian groups $A=(A^n)_{n\in\ZZ}$ with the usual monoidal product \((A\tens B)^n=\coprod_{l+m=n}A^l\tens_\ZZ B^m\) and the signed symmetry \(c:a\tens b\mapsto(-1)^{ab}b\tens a\).
Consider the functor
\begin{equation}
-^\bull: \cv \to \Ab^\ZZ, \qquad X \mapsto X^\bull =(X^n)_{n\in\ZZ}, \qquad X^n =\cv(\1[-n],X).
\label{eq-V-AbZ}
\end{equation}
Equipped with the natural transformation
\[ \tens: X^n\tens_\ZZ Y^m =\cv(\1[-n],X)\tens_\ZZ\cv(\1[-m],Y) \to \cv(\1[-n-m],X\tens Y) =(X\tens Y)^{n+m},
\]
it becomes lax symmetric monoidal.
The graded abelian group $\kk=\1^\bull$, \(\kk^n=\1^\bull=\cv(\1[-n],\1)\) is actually a graded ring, graded commutative in the sense that \(ba=(-1)^{ab}ab\) for all \(a,b\in\kk^\bull\).
Moreover, the graded abelian groups coming from $\cv$ are actually commutative $\kk$\n-bimodules (whose category is denoted $\gr=\gr_\kk$), so lax monoidal functor~\eqref{eq-V-AbZ} has the latter category as the target: \(-^\bull:\cv\to\gr\).
We use the term `$\kk$\n-modules' instead of longer term `commutative $\kk$\n-bimodules'.

Being closed $\cv$ is enriched into itself (or rather $\und\cv$ is enriched into $\cv$).
The lax monoidal functor $-^\bull$ makes $\cv$ enriched in $\gr$.
Denote this $\gr$\n-enriched category \(\overline\cv\).
Thus for each pair $X,Y$ of objects of $\cv$ there is a graded $\kk$\n-module \(\overline\cv(X,Y)=(\und\cv(X,Y)^n)_{n\in\ZZ}\).
Elements of \(\und\cv(X,Y)^n=\cv(\1[-n],\und\cv(X,Y))\) are called morphisms $X\to Y$ of degree $n$.
For instance, \(\und\cv(X,Y)^0=\cv(X,Y)\) and \(\sigma_X^n\in\und\cv(X,X[n])^{-n}\) are morphisms of degree $-n$.
The $\gr$\n-category structure of $\overline\cv$ includes, in particular, the composition of morphisms of certain degrees.
\textit{E.g.} equation~\eqref{dia-sigma-dinatural} for \(f:X\to Y\in\cv\) can be presented as
\[ \bigl(X \rTTo^{\sigma_X^n} X[n] \rTTo^{f[n]} Y[n] \bigr) =\bigl(X \rTTo^f Y \rTTo^{\sigma_Y^n} Y[n] \bigr) \in \overline\cv,
\]
which can be converted to
\[ f[n] =\bigl( X[n] \rTTo^{\sigma_X^{-n}} X \rTTo^f Y \rTTo^{\sigma_Y^n} Y[n] \bigr) \in \overline\cv.
\]
Therefore, there are natural isomorphisms (of degree 0)
\[ \sigma^{-n}(1^{\tens(a-1)}\tens\sigma^n\tens1^{\tens(n-a)}): (\tens_{i=1}^kX_i)[n] \to X_1\tdt X_{a-1}\tens X_a[n]\tens X_{a+1}\tdt X_k
\]
which we use explicitly or implicitly.
Given \(X,Y,Z\in\Ob\cv\) and \(k\in\ZZ\) we have the following diagram.
More precisely, for each \(p:X\tens Y\to Z\in\cv\) there is a unique \(p^!:Y\to\und\cv(X,Z)\in\cv\) which makes the small triangle commutative
\begin{diagram}
X\tens\und\cv(X,Z)[k] &\rTTo^{(1\tens\sigma^{-k})\sigma^k} &(X\tens\und\cv(X,Z))[k] &\rTTo^{\ev[k]} &Z[k]
\\
&= &\uTTo<{(1\tens p^!)[k]} &\ruTTo^{\ttt=\quad}_{p[k]} &
\\
\uTTo<{1\tens p^![k]} &&(X\tens Y)[k] &&\uTTo>q
\\
&\ruTTo^{(1\tens\sigma^{-k})\sigma^k} &&= &
\\
X\tens Y[k] &&\rEq &&X\tens W
\end{diagram}
where \(W=Y[k]\).
Thus for an arbitrary \(q:X\tens W\to Z[k]\in\cv\) there is a unique \(q^!:W\to\und\cv(X,Z)[k]\in\cv\) which makes the exterior commutative.
Therefore, $\und\cv(X,Z)[k]$ is isomorphic to $\und\cv(X,Z[k])$ and the isomorphism $i$ satisfies
\begin{diagram}
X\tens\und\cv(X,Z)[k] &\rTTo^{(1\tens\sigma^{-k})\sigma^k} &(X\tens\und\cv(X,Z))[k]
\\
\dTTo<{1\tens i}>\wr &= &\dTTo>{\ev[k]}
\\
X\tens\und\cv(X,Z[k]) &\rTTo^\ev &Z[k]
\end{diagram}
so we have natural bijections
\begin{equation}
\und\cv(X,Z)^k =\cv(\1[-k],\und\cv(X,Z)) \simeq \cv(\1,\und\cv(X,Z)[k]) \simeq \cv(\1,\und\cv(X,Z[k])) \simeq \cv(X,Z[k]).
\label{eq-V(XZ)k-V(XZk)}
\end{equation}
Let us consider the case when \(-^\bull:\cv\to\gr\) is an equivalence.

\begin{example}
Let $\cv=\gr=\gr\text-\kk\modul$.
For any graded \(\kk\)-module $M$ and an integer $a$ denote by $M[a]$ the same module with the cohomological grading shifted by $a$: \(M[a]^k=M^{a+k}\).
The unit object of $\gr$ is $\1=\kk$ and its shifts are \(\1[a]=\kk[a]\).
We identify $M[a]$ with \(M\tens\kk[a]\) via \(x\mapsto x\tens1\) where \(1\in\kk[a]^{-a}\).
Then \(\sigma^a:M\to M[a]\) is the ``identity map'' \(M^k\ni x\mapsto x\in M[a]^{k-a}\) of degree \(\deg\sigma^a=-a\).
Write elements of $M[a]$ as \(m\sigma^a\).
When \(f:V\to X\) is a homogeneous map of certain degree, the map \(f[a]:V[a]\to X[a]\) is defined as \(f[a]=(-1)^{a\deg f}\sigma^{-a}f\sigma^a=(-1)^{af}\sigma^{-a}f\sigma^a\).
In particular, the differential \(d:M\to M[-1]\) in a $\dg$\n-module $M$ induces the differential \(d[a]:M[a]\to M[a-1]\) in $M[a]$.
The degree 0 isomorphisms \(\sigma^{-a}\cdot(\sigma^a\tens1):(V\tens W)[a]\to(V[a])\tens W\),
\((v\tens w)\sigma^a\mapsto(-1)^{wa}v\sigma^a\tens w\), and \(\sigma^{-a}\cdot(1\tens\sigma^a):(V\tens W)[a]\to V\tens(W[a])\), \((v\tens w)\sigma^a\mapsto v\tens w\sigma^a\), are graded natural.
This means that for arbitrary homogeneous maps \(f:V\to X\), \(g:W\to Y\) the following squares commute:
\begin{diagram}[w=5em]
(V[a])\tens W &\lTTo^{\sigma^{-a}\cdot(\sigma^a\tens1)}_\sim &(V\tens W)[a]
&\rTTo^{\sigma^{-a}\cdot(1\tens\sigma^a)}_\sim &V\tens(W[a])
\\
\dTTo<{(f[a])\tens g} &&\dTTo<{(f\tens g)[a]} &&\dTTo>{f\tens(g[a])}
\\
(X[a])\tens Y &\lTTo^{\sigma^{-a}\cdot(\sigma^a\tens1)}_\sim &(X\tens Y)[a]
&\rTTo^{\sigma^{-a}\cdot(1\tens\sigma^a)}_\sim &X\tens(Y[a])
\end{diagram}
Actually, the second isomorphism is ``more natural'' than the first one, not only because it does not have a sign, but also because it suits better the right operator system of notations, accepted in this paper.
We often identify \((V\tens W)[a]\) with \(V\tens(W[a])\) via \(\sigma^{-a}\cdot(1\tens\sigma^a)\).
\end{example}

\section{Homotopy coalgebras}
%A page taken from \cite[Section Lax comonads]{?}.
We describe the category of homotopy coalgebras, concentrating on properties of relatively cofree homotopy coalgebras, morphisms and coderivations from an ordinary coalgebra to a relatively cofree homotopy coalgebra, morphisms and coderivations between relatively cofree homotopy coalgebras.

\subsection{Homotopy comonoids}
A coalgebra (=comonoid) in \((\cv,\tens,\lambda)\) is defined as an algebra in \((\cv^\op,\tens,\lambda^\op)\), or as a colax Monoidal functor \(C:\1\to\cv\), \textit{cf.} \cite[Definition~2.25]{BesLyuMan-book}.
Equivalently, it is an object $C$ of $\cv$ equipped with a morphism \(\Delta_I:C\to C^{\tens I}\) for each $I\in\Ob\co_\sk$ such that \(\Delta_{\mb1}=\id\) and for every map $\phi:I\to J\in\co_\sk$ the following equation holds:
\begin{equation}
\Delta_I = \bigl( C \rTTo^{\Delta_J} C^{\tens J} \rTTo^{\tens^{j\in J}\Delta_{\phi^{-1}j}\;} \tens^{j\in J}C^{\tens\phi^{-1}j} \rTTo^{\lambda^\phi} C^{\tens I} \bigr).
\label{eq-Delta-CCCC}
\end{equation}
A lax Monoidal functor between colax Monoidal categories \((F,\varphi):(\cc,\tens,\lambda)\to(\cd,\tens,\lambda)\) is the opposite to the colax Monoidal functor between lax Monoidal categories \((F^\op,\varphi^\op):(\cc^\op,\tens,\lambda^\op)\to(\cd^\op,\tens,\lambda^\op)\), see \cite[Definition~2.26]{BesLyuMan-book}.
The following result of \cite{BesLyuMan-book} treats algebras in lax Monoidal categories, however, we cite it in dual form:

\begin{proposition}[Proposition~2.27 of \cite{BesLyuMan-book}]
\label{pro-coalgebra-lax-Monoidal-functor}
A coalgebra $C$ in a colax Monoidal category $\cv$ defines a lax Monoidal functor
\begin{multline*}
\hfill (F,\varphi^I): (\co_\sk^\op,\sqcup_I,\id) \to (\cv,\tens^I,\lambda^\phi), \qquad F(J)=C^{\tens J}, \hfill
\\
(\phi^\op:J\to I)\in\co_\sk^\op \leftrightarrow (\phi:I\to J)\in\co_\sk
\ifx\chooseClass1
\hfill \\ \hfill
\fi
\mapsto \Delta_C^\phi = \bigl(C^{\tens J} \rTTo^{\tens^{j\in J}\Delta_{\phi^{-1}j}} \tens^{j\in J}C^{\tens\phi^{-1}j} \rto{\lambda^\phi} C^{\tens I} \bigr).
\end{multline*}
Let \(n_i\in\NN=\Ob\co_\sk^\op\) for \(i\in I\in\Ob\co_\sk\).
The natural transformation \(\varphi^I:\tens^{i\in I}C^{\tens n_i}\to C^{\tens\sum_{i\in I}n_i}\) is defined as \(\lambda^\psi:\tens^{i\in I}C^{\tens \psi^{-1}i}\to C^{\tens N}\) for \(N=\sqcup_{i\in I}\mb n_i\), \(\psi:N\to I\in\co_\sk\) such that \(|\psi^{-1}i|=n_i\).
\end{proposition}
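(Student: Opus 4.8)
The statement asserts two things: that $F$ is a functor $\co_\sk^\op\to\cv$, and that together with the morphisms $\varphi^I$ it constitutes a lax Monoidal functor. Since a coalgebra in $\cv$ is by definition an algebra in $\cv^\op$, and a lax Monoidal functor into $\cv$ is the opposite of a colax Monoidal functor into $\cv^\op$, the whole assertion is the image under the duality $\cv\leftrightarrow\cv^\op$ of Proposition~2.27 of \cite{BesLyuMan-book}; one legitimate route is simply to invoke that result verbatim. For a self-contained argument I would instead verify the two parts directly, reducing everything in both cases to the coassociativity axiom~\eqref{eq-Delta-CCCC} of $C$ together with the naturality and coherence of the structure morphisms $\lambda^\phi$.

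\emph{Functoriality.} For the identity $\id_J$ each fibre $\id_J^{-1}j$ is a singleton, so $\Delta_{\id_J^{-1}j}=\Delta_{\mb1}=\id$ and $\lambda^{\id_J}=\id$, whence $\Delta_C^{\id_J}=\id_{C^{\tens J}}$. For composability I take $\phi:I\to J$ and $\psi:J\to K$ in $\co_\sk$, so that $\psi^\op\cdot\phi^\op=(\phi\psi)^\op$ in $\co_\sk^\op$ and the claim becomes $\Delta_C^{\phi\psi}=\Delta_C^\psi\cdot\Delta_C^\phi$. The plan is to expand the right-hand side, slide $\lambda^\psi$ to the right past $\tens^{j\in J}\Delta_{\phi^{-1}j}$ using naturality of $\lambda^\psi$, and independently expand $\Delta_C^{\phi\psi}$ by applying~\eqref{eq-Delta-CCCC} to each fibrewise restriction $\phi_k:(\phi\psi)^{-1}k\to\psi^{-1}k$. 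Both sides then acquire the common prefactor $\tens^{k\in K}\bigl[\Delta_{\psi^{-1}k}\cdot(\tens^{j\in\psi^{-1}k}\Delta_{\phi^{-1}j})\bigr]$, and the desired identity reduces to the coherence relation $(\tens^{k\in K}\lambda^{\phi_k})\cdot\lambda^{\phi\psi}=\lambda^\psi\cdot\lambda^\phi$ of the colax Monoidal structure of $\cv$.

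\emph{Lax Monoidal structure.} Here I would first note that $\varphi^I=\lambda^\psi$ is well defined: since $\co_\sk$ consists of ordered sets and non-decreasing maps, the map $\psi:N\to I$ with $|\psi^{-1}i|=n_i$ is the unique such map, so $\lambda^\psi$ involves no choice. The unit condition $\varphi^{\mb1}=\id$ and the nullary condition $\varphi^{\mb0}=\id_{\1}$ follow from the corresponding axioms of $\lambda$ for maps onto a singleton and the empty set. The remaining coherence—compatibility of $\varphi$ with the grouping morphisms $\lambda^\phi$ of the target and with the disjoint union of the strict source $\co_\sk^\op$, together with naturality of $\varphi^I$ in its arguments—unwinds, after substituting $\varphi^I=\lambda^\psi$, into further instances of the very same associativity relation $(\tens\lambda^{\phi_k})\cdot\lambda^{\phi\psi}=\lambda^\psi\cdot\lambda^\phi$ and of the naturality squares for $\lambda$.

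\emph{Main obstacle.} No step is deep; the entire content is bookkeeping. The one point demanding care is organizing the reindexing of iterated tensor products along composites of maps in $\co_\sk$ and matching it against the coherence axiom of $\lambda$ in exactly the right form: tracking which restriction $\phi_k$ produces which $\lambda^{\phi_k}$, and checking that the naturality slide of $\lambda^\psi$ is applied to the correct family of morphisms $\Delta_{\phi^{-1}j}$. Once the source $\co_\sk^\op$ is treated as strictly monoidal and the fibres are indexed consistently, both the functoriality and the lax Monoidal axioms collapse to the two structural identities of the coalgebra $C$ and the single coherence identity of $\lambda$.
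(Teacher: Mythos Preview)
Your proposal is correct. The paper does not supply its own proof of this proposition at all: it is stated with the attribution ``Proposition~2.27 of \cite{BesLyuMan-book}'' and then simply used, the only additional comment being that in the strict case it reduces to Mac~Lane's \cite[Proposition~VII.5.1]{MacLane}. So there is nothing to compare against beyond the citation itself, which you already mention as one legitimate route.

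Your direct verification goes further than the paper by actually unpacking the argument. The outline is sound: functoriality reduces to naturality of $\lambda^\psi$ plus the coherence identity $(\tens^{k\in K}\lambda^{\phi_k})\cdot\lambda^{\phi\psi}=\lambda^\psi\cdot\lambda^\phi$ together with \eqref{eq-Delta-CCCC}, and the lax Monoidal axioms for $\varphi^I=\lambda^\psi$ are further instances of the same coherence. Your remark that the only subtlety is the bookkeeping of fibre indices is accurate; nothing is missing.
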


When $\cv$ is a strict monoidal category, a coalgebra $C$ in $\cv$ gives rise to a strict monoidal functor, as proven by Mac Lane \cite[Proposition~VII.5.1]{MacLane}.
\propref{pro-coalgebra-lax-Monoidal-functor} gives reasons for

\begin{definition}[cf. Leinster \cite{math/9912084} Definition~2.2]
A \emph{homotopy comonoid} in a colax Monoidal category \((\cv,\tens,\lambda)\) is a lax Monoidal functor \((F,\varphi^I):(\co_\sk^\op,\sqcup_I,\id)\to(\cv,\tens^I,\lambda^\phi)\).
\end{definition}

When \((\cv,\tens,\lambda)\) is a Monoidal and homotopical category, the above notion is a particular case of homotopy comonoid due to Leinster \cite[Definition~2.2]{math/9912084}, see detailed exposition in \cite{math.QA/0002180}.
In our case all morphisms of $\cv$ are declared to be homotopy equivalences.

A homotopy comonoid $(C,\chi)$ in a colax Monoidal category \((\cv,\tens,\lambda)\) is the collection of
\begin{myitemize}
\item[--] objects \(C(k)\in\Ob\cv\) for all $k\in\NN$;

\item[--] morphisms \(C(\phi^\op):C(J)\to C(I)\in\cv\) for all \(\phi:I\to J\in\co_\sk\) (with the corresponding \(\phi^\op:J\to I\in\co_\sk^\op\));

\item[--] morphisms \(\chi^I_{N_1,\dots,N_I}:\tens^{i\in I}C(N_i)\to C(\sqcup_{i\in I}N_i)\in\cv\) for all \(I,N_1,\dots,N_I\in\Ob\co_\sk\)
\end{myitemize}
such that
\begin{myitemize}
\item[--] \(C:\co_\sk^\op\to\cv\) is a functor;

\item[--] \(\chi^I_{N_1,\dots,N_I}\) is natural in $N_1$, \dots, $N_I$, that is, for all families \(\phi_i:M_i\to N_i\in\co_\sk\), \(i\in I\),
\begin{diagram}[LaTeXeqno]
\tens^{i\in I}C(N_i) &\rTTo^{\chi^I_{N_1,\dots,N_I}} &C(\sqcup_{i\in I}N_i)
\\
\dTTo<{\tens^{i\in I}C(\phi_i^\op)} &= &\dTTo>{C((\sqcup_{i\in I}\phi_i)^\op)}
\\
\tens^{i\in I}C(M_i) &\rTTo^{\chi^I_{M_1,\dots,M_I}} &C(\sqcup_{i\in I}M_i)
\label{dia-CN-CN-CM-CM}
\end{diagram}

\item[--] \(\chi^{\mb1}_N=\id\);

\item[--] for every map $\phi:I\to J\in\co_\sk$ of and all families \((N_i)_{i\in I}\) of objects of $\co_\sk$ the following equation holds:
\begin{diagram}[LaTeXeqno]
\tens^{j\in J}\tens^{i\in\phi^{-1}j}C(N_i) &\rTTo^{\tens^{j\in J}\chi^{\phi^{-1}j}} &\tens^{j\in J}C(\sqcup_{i\in\phi^{-1}j}N_i) &\rTTo^{\chi^J} &C(\sqcup_{j\in J}\sqcup_{i\in\phi^{-1}j}N_i)
\\
\dTTo<{\lambda^\phi} &&= &&\dEq
\\
\tens^{i\in I}C(N_i) &&\rTTo^{\chi^I} &&C(\sqcup_{i\in I}N_i)
\label{dia-2phi-1phi-Monoidal-functor}
\end{diagram}
\end{myitemize}

\begin{example}\label{exa-lax-comonoid-tilde-T}
Let $(\cv,\tens^I,\lambda^\phi)$ be a colax Monoidal category with countable products.
A homotopy comonoid \((C,\chi)=X\tilde{T}\) is associated with an object $X$ of $\cv$ as follows.
The functor $X\tilde{T}$ is defined on objects \(I\in\Ob\co_\sk\) by
\[ X\tilde{T}(I) =\prod_{n\in\NN^I} X^{\tens\|n\|}, \qquad n=(n_1,\dots,n_I), \qquad \|n\| \overset{\text{def}}= \sum_{i\in I}n_i,
\]
and on morphisms $\phi:I\to J\in\co_\sk$ by
\begin{equation}
X\tilde{T}(\phi^\op)\pr_n =\pr_{\phi_*n}: X\tilde{T}(J) =\prod_{k\in\NN^J}X^{\tens\|k\|} \to X^{\tens\|\phi_*n\|} =X^{\tens\|n\|}, \qquad \forall\, n\in\NN^I,
\label{eq-XT(fop)prn}
\end{equation}
where \(\phi_*n\in\NN^J\) has the components
\begin{equation}
(\phi_*n)_j =\sum_{i\in\phi^{-1}j}n_i.
\label{eq-(f*n)j}
\end{equation}
Note that \(\|\phi_*n\|=\|n\|\).

For an arbitrary pair of composable maps \(I\rto\phi J\rto\psi K\in\co_\sk\) and an element $n\in\NN^I$ we have \(\psi_*\phi_*n=(\psi\circ\phi)_*n\in\NN^K\).
This implies for \(C=X\tilde{T}\)
\begin{multline*}
C(\psi^\op)C(\phi^\op)\pr_n =C(\psi^\op)\pr_{\phi_*n} =\pr_{\psi_*\phi_*n} =\pr_{(\phi\psi)_*n} =C((\phi\psi)^\op)\pr_n:
\ifx\chooseClass1
\\
\fi
C(K) \to X^{\tens\|n\|},
\end{multline*}
therefore, \(C(\psi^\op)C(\phi^\op)=C((\phi\psi)^\op)\) and $X\tilde{T}$ is a functor.

Let \(I,N_1,\dots,N_I\Ob\co_\sk\) and
\begin{equation}
n =n_1\oplus\dots\oplus n_I \in \NN^{N_1}\oplus\dots\oplus\NN^{N_I} =\NN^{\sqcup_{i\in I}N_i}
\label{eq-nnn-NNNNNN}
\end{equation}
The transformation $\chi^I$ is defined by
\begin{equation}
\chi^I_{N_1,\dots,N_I}\pr_n =\Bigl( \bigotimes_{i\in I} \prod_{n_i\in\NN^{N_i}}X^{\tens\|n_i\|} \rTTo^{\tens^{i\in I}\pr_{n_i}\;} \bigotimes_{i\in I} X^{\tens\|n_i\|} \rTTo^{\lambda^\psi} X^{\tens\|n\|} \Bigr),
\label{eq-chi-pr-X-X-X}
\end{equation}
where \(\psi:\mb n=\lesscup_{i\in I}\mb{n_i}\to I\) takes $\mb{n_i}$ to $i$.
The collection $\chi^I$ is natural in $N_1$, \dots, $N_I$.
In fact, for a family of maps \(\phi_i:N_i\to M_i\in\co_\sk\), $i\in I$, and the induced \(\sqcup_{i\in I}\phi_i:N=\sqcup_{i\in I}N_i\to\sqcup_{i\in I}M_i=M\) we have
\begin{diagram}[h=2.4em]
\bigotimes_{i\in I} \prod_{n_i\in\NN^{M_i}} X^{\tens\|m_i\|} &\rTTo^{\chi^I_{M_1,\dots,M_I}} &\prod_{m\in\NN^M} X^{\tens\|m\|}
\\
\dTTo<{\tens^{i\in I}X\tilde{T}(\phi_i^\op)} &= &\dTTo>{X\tilde{T}((\sqcup_{i\in I}\phi_i)^\op)}
\\
\bigotimes_{i\in I} \prod_{n_i\in\NN^{N_i}} X^{\tens\|n_i\|} &\rTTo^{\chi^I_{N_1,\dots,N_I}} &\prod_{n\in\NN^N} X^{\tens\|n\|}
\end{diagram}
In order to prove this, postcompose the equation with $\pr_n$ and notice that
\begin{gather*}
(\sqcup_{i\in I}\phi_i)_*n =\phi_{1*}n_1\oplus\dots\oplus \phi_{I*}n_I \in \NN^{M_1}\oplus\dots\oplus\NN^{M_I} =\NN^M,
\\
\begin{split}
&\bigl[\tens^{i\in I}X\tilde{T}(\phi_i^\op)\bigr] \chi^I_{N_1,\dots,N_I} \pr_n =\bigl[\tens^{i\in I}X\tilde{T}(\phi_i^\op)\bigr] \bigl(\tens^{i\in I}\pr_{n_i}\bigr) \lambda^\psi
\\
&=\bigl(\tens^{i\in I}\pr_{\phi_{i*}n_i}\bigr) \lambda^\psi =\chi^I_{M_1,\dots,M_I} \pr_{(\sqcup_{i\in I}\phi_i)_*n} =\chi^I_{M_1,\dots,M_I} X\tilde{T}((\sqcup_{i\in I}\phi_i)^\op) \pr_n.
\end{split}
\end{gather*}

Clearly, \(\chi^{\mb1}_N=\id\).
For every $\phi:I\to J\in\co_\sk$ and each family \(N_1,\dots,N_I\Ob\co_\sk\) we have to prove equation~\eqref{dia-2phi-1phi-Monoidal-functor} which takes the form of square (?) in the following diagram, where \(M_j=\sqcup_{i\in\phi^{-1}j}N_i\) and \(N=\sqcup_{i\in I}N_i=\sqcup_{j\in J}M_j\):
\begin{diagram}[h=2.4em]
\bigotimes_{j\in J} \bigotimes_{i\in\phi^{-1}j} \prod_{n_i\in\NN^{N_i}} X^{\tens\|n_i\|} &\rTTo^{\tens^{j\in J}\chi^{\phi^{-1}j}} &\bigotimes_{j\in J} \prod_{m_j\in\NN^{M_j}} X^{\tens\|m_j\|} &\rTTo^{\tens^{j\in J}\pr_{m_j}\;} &\bigotimes_{j\in J} X^{\tens\|m_j\|}
\\
\dTTo<{\lambda^\phi} &(?) &\dTTo>{\chi^J} &= &
\\
\bigotimes_{i\in I} \prod_{n_i\in\NN^{N_i}} X^{\tens\|n_i\|} &\rTTo^{\chi^I} &\prod_{n\in\NN^N} X^{\tens\|n\|} &&\dTTo>{\lambda^{\sqcup_{j\in J}\mb{m_j}\to J}}
\\
\dTTo<{\tens^{i\in I}\pr_{n_i}} &= &&\rdTTo^{\pr_n} &
\\
\bigotimes_{i\in I} X^{\tens\|n_i\|} &&\rTTo^{\lambda^{\sqcup_{i\in I}\mb{n_i}\to I}} &&X^{\tens\|n\|}
\end{diagram}
With notation~\eqref{eq-nnn-NNNNNN} and 
\[ m_j =\oplus_{i\in\phi^{-1}j}n_i \in \bigoplus_{i\in\phi^{-1}j}\NN^{N_i} =\NN^{M_j}
\]
both trapezia in the above diagram commute.
The top-right exterior path is equal to
\begin{multline*}
\Bigl[ \bigotimes_{j\in J} \bigotimes_{i\in\phi^{-1}j} \prod_{n_i\in\NN^{N_i}} X^{\tens\|n_i\|} \rTTo^{\tens^{j\in J}\tens^{i\in\phi^{-1}j}\pr_{n_i}\;} \bigotimes_{j\in J} \bigotimes_{i\in\phi^{-1}j} X^{\tens\|n_i\|}
\\
\hfill \rTTo^{\tens^{j\in J}\lambda^{\sqcup_{i\in\phi^{-1}j}\mb{n_i}\to\phi^{-1}j}} \bigotimes_{j\in J} X^{\tens\|m_j\|} \rTTo^{\lambda^{\sqcup_{j\in J}\mb{m_j}\to J}} X^{\tens\|n\|} \Bigr] \hskip\multlinegap
\\
=\Bigl[ \bigotimes_{j\in J} \bigotimes_{i\in\phi^{-1}j} \prod_{n_i\in\NN^{N_i}} X^{\tens\|n_i\|} \rTTo^{\tens^{j\in J}\tens^{i\in\phi^{-1}j}\pr_{n_i}\;} \bigotimes_{j\in J} \bigotimes_{i\in\phi^{-1}j} X^{\tens\|n_i\|} \rto{\lambda^\phi} \bigotimes_{i\in I} X^{\tens\|n_i\|}
\ifx\chooseClass1
\\
\fi
\rTTo^{\lambda^{\sqcup_{i\in I}\mb{n_i}\to I}} X^{\tens\|n\|} \Bigr]
\end{multline*}
which equals the left-bottom exterior path in the above diagram.
The equation here is the ``tetrahedron equation'' for the colax Monoidal category $(\cv,\tens^I,\lambda^\phi)$ written for maps \(\sqcup_{i\in I}\mb{n_i}\rto\psi I\rto\phi J\) (see \cite[Eq.~(2.5.4)]{BesLyuMan-book} with the opposite orientation of arrows).
Thus, $(X\tilde{T},\chi)$ is a homotopy coalgebra.
\end{example}

\begin{proposition}\label{pro-lax-coalgebra-ordinary}
Let $(C,\chi)$ be a homotopy coalgebra in a colax Monoidal category.
It comes from an ordinary coalgebra as in \propref{pro-coalgebra-lax-Monoidal-functor} iff
\begin{equation}
C(I) =C(1)^{\tens I}, \qquad \chi^I_{n_1,\dots,n_I} =\lambda^{\sqcup_{i\in I}\mb{n_i}\to I}.
\label{eq-CC-chi-lambda}
\end{equation}
\end{proposition}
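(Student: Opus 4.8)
The plan is to prove both implications of the biconditional by unwinding the definitions of \propref{pro-coalgebra-lax-Monoidal-functor} and of a homotopy coalgebra, with condition~\eqref{eq-CC-chi-lambda} serving as the bridge.

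For the forward direction, suppose $(C,\chi)$ arises from an ordinary coalgebra via \propref{pro-coalgebra-lax-Monoidal-functor}. By that proposition the underlying functor is $F(J)=C^{\tens J}$, so setting $C(1)$ to be the object $C$ of the ordinary coalgebra we immediately get $C(I)=C(1)^{\tens I}$, the first half of \eqref{eq-CC-chi-lambda}. For the second half I would read off the formula for $\varphi^I$ given explicitly at the end of \propref{pro-coalgebra-lax-Monoidal-functor}: the structure map $\varphi^I:\tens^{i\in I}C^{\tens n_i}\to C^{\tens\sum n_i}$ is defined to be $\lambda^\psi$ for the map $\psi:\sqcup_{i\in I}\mb{n_i}\to I$ with $|\psi^{-1}i|=n_i$. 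Since $\chi^I$ in the homotopy-coalgebra language is exactly this $\varphi^I$, we obtain $\chi^I_{n_1,\dots,n_I}=\lambda^{\sqcup_{i\in I}\mb{n_i}\to I}$, which is the second half of \eqref{eq-CC-chi-lambda}. This direction is essentially a matter of matching notation.

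For the converse, assume \eqref{eq-CC-chi-lambda} holds. I would define an ordinary coalgebra structure on $C\overset{\text{def}}=C(1)$ by setting $\Delta_I:C\to C^{\tens I}$ to be $\chi$ read in the opposite direction, or more precisely by extracting the comultiplications from the functor $C(\phi^\op)$ together with the hypothesis $C(I)=C(1)^{\tens I}$. The content to verify is the coassociativity/counitality condition~\eqref{eq-Delta-CCCC} for every $\phi:I\to J$. I would obtain this by feeding the specific family $n_i=1$ (so $N_i=\mb1$) into the homotopy-coalgebra axiom~\eqref{dia-2phi-1phi-Monoidal-functor}: under \eqref{eq-CC-chi-lambda} the maps $\chi^{\phi^{-1}j}$, $\chi^J$, $\chi^I$ all become instances of $\lambda$, and the commuting square reduces precisely to the cocycle condition \eqref{eq-Delta-CCCC}, using that the functoriality of $C$ and the naturality square~\eqref{dia-CN-CN-CM-CM} pin down the $C(\phi^\op)$ in terms of $\lambda^\phi$. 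The counit axiom $\Delta_{\mb1}=\id$ follows from $\chi^{\mb1}_N=\id$.

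The main obstacle I anticipate is the bookkeeping in the converse direction: condition~\eqref{eq-CC-chi-lambda} constrains the $\chi^I$ and the objects $C(I)$, but one must also confirm that the \emph{functor values} $C(\phi^\op)$ are forced to agree with the comultiplications $\Delta_C^\phi$ of \propref{pro-coalgebra-lax-Monoidal-functor} rather than being extra free data. The key is that naturality~\eqref{dia-CN-CN-CM-CM} and the Monoidal axiom~\eqref{dia-2phi-1phi-Monoidal-functor}, once $\chi$ is fixed to be $\lambda$, leave no freedom: they express each $C(\phi^\op)$ through $\lambda^\phi$ composed with the already-determined structure. I would therefore carry out the verification by choosing, for each $\phi:I\to J$, the family that isolates $C(\phi^\op)$ and checking it coincides with $\Delta_C^\phi$, after which \eqref{eq-Delta-CCCC} is automatic from the colax Monoidal coherence of $(\cv,\tens,\lambda)$.
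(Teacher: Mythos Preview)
Your forward direction matches the paper. For the converse your instinct is right --- naturality of $\chi$ is what pins down $C(\phi^\op)$ --- but two steps in your plan do not work as stated. First, axiom~\eqref{dia-2phi-1phi-Monoidal-functor} cannot yield~\eqref{eq-Delta-CCCC}: that diagram involves only the $\chi$'s and $\lambda^\phi$, never any $C(\phi^\op)$, so under hypothesis~\eqref{eq-CC-chi-lambda} it collapses to the cocycle identity for $\lambda$, which is automatic in any colax Monoidal category and says nothing about the comultiplications. Second, once $C(\phi^\op)$ has been determined, equation~\eqref{eq-Delta-CCCC} follows not from $\lambda$-coherence but from \emph{functoriality of $C$}.

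The paper's argument runs as follows. Set $\Delta_K=C((\con:K\to\mb1)^\op)$. Write $\phi:I\to J$ as $\sqcup_{j\in J}\phi_j$ with $\phi_j=\con:\phi^{-1}j\to\{j\}$, and apply naturality~\eqref{dia-CN-CN-CM-CM} with $N_j=\mb1$, $M_j=\phi^{-1}j$: since $\chi^J_{\mb1,\dots,\mb1}=\lambda^{\id_J}=\id$, one reads off $C(\phi^\op)=(\tens^{j\in J}\Delta_{\phi^{-1}j})\lambda^\phi$, exactly the formula of \propref{pro-coalgebra-lax-Monoidal-functor}. Then~\eqref{eq-Delta-CCCC} is the statement $C((\con:I\to\mb1)^\op)=C((\con:J\to\mb1)^\op)\cdot C(\phi^\op)$, which is $C$ applied to the factorization $\con=(I\rto\phi J\rto\con\mb1)$ --- pure functoriality, not a coherence of $\lambda$.
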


\begin{proof}
Necessity follows from \propref{pro-coalgebra-lax-Monoidal-functor}.

Assume that equations~\eqref{eq-CC-chi-lambda} hold.
Denote
\[ \Delta_K =C((\con:K\to1)^\op): C =C(1) \to C^{\tens K}.
\]
Given \(\phi:I\to J\in\co_\sk\) represent as \(\sqcup_{j\in J}\phi_j:\sqcup_{j\in J}\phi^{-1}j\to J\), where \(\phi_j=\con:M_j=\phi^{-1}j\to\{j\}=\mb1=N_j\).
Then \(\sqcup_{j\in J}N_j=J\) and \(\chi^J_{N_1,\dots,N_J}=\lambda^{\id_J}=\id\).
Naturality \eqref{dia-CN-CN-CM-CM} of $\chi^J$ implies that
\[ C(\phi^\op) =C((\sqcup_{i\in J}\phi_j)^\op) =\bigl[ \tens^{j\in J}C(\phi_j^\op) \bigr] \lambda^\phi =\bigl( \tens^{j\in J}\Delta_{\phi^{-1}j} \bigr) \lambda^\phi
\]
just as in \propref{pro-coalgebra-lax-Monoidal-functor}.
Being a functor, $C$ takes the equation \(\con=\bigl(I\rto\phi J\rto\con \mb1\bigr)\) to equation~\eqref{eq-Delta-CCCC}.
Thus \((C=C(1),\Delta_I)\) is an ordinary coalgebra.
\QED\end{proof}

\subsection{Morphisms of homotopy coalgebras}
We define morphisms of homotopy coalgebras, describe morphisms from an ordinary coalgebra to $X\tilde{T}$, $X\in\Ob\cv$, give a supply of morphisms between $X\tilde{T}$ and $Y\tilde{T}$.

\begin{definition}
A \emph{morphism between homotopy comonoids} \(t:(C,\chi)\to(G,\gamma)\) is a Monoidal transformation of lax Monoidal functors
\[ t: (C,\chi^I) \to (G,\gamma^I): (\co_\sk^\op,\sqcup_I,\id) \to (\cv,\tens^I,\lambda^\phi).
\]
The category of homotopy comonoids=coalgebras is denoted $\hCoalg$.
\end{definition}

In detail, $t$ is a family \(t(k):C(k)\to G(k)\in\cv\), $k\in\NN$, which is\\ (a) natural: for all \(\phi:I\to J\in\co_\sk\) and (b) monoidal: for every $I\in\Ob\co_\sk$, $n_i\in\NN$, $i\in I$,
\begin{equation}
\begin{diagram}[inline]
C(J) &\rTTo^{t(J)} &G(J)
\\
\dTTo<{C(\phi^\op)} &= &\dTTo>{G(\phi^\op)}
\\
C(I) &\rTTo^{t(I)} &G(I)
\end{diagram}
\quad,\quad
\begin{diagram}[width=5em,inline]
\tens^{i\in I}C(n_i) &\rTTo^{\chi^I} &C(n_1+\dots+n_I)
\\
\dTTo<{\tens^It} &= &\dTTo>t
\\
\tens^{i\in I}G(n_i) &\rTTo^{\gamma^I} &G(n_1+\dots+n_I)
\end{diagram}
\;.
\label{eq-dia-Monoidal-trans}
\end{equation}

\begin{example}
The construction of \exaref{exa-lax-comonoid-tilde-T} is a functor \(\tilde{T}:\cv\to\hCoalg\).
\end{example}

\begin{example}\label{exa-e:XT-tildeT}
Let $X\in\Ob\cv$.
The tensor object \(XT=\sqcup_{n\ge0}X^{\tens n}\) equipped with the cut comultiplication
\[ (x_1\dots x_n)\Delta =\sum_{k=0}^n x_1\dots x_k\tens x_{k+1}\dots x_n \in \bigsqcup_{k=0}^nX^{\tens k}\tens X^{\tens(n-k)} \subset XT\tens XT
\]
and the counit \(\eps=\pr_0:XT\to\kk\) is a coalgebra in $\cv$.
All comultiplications $\Delta_I$ on the summand indexed by $k\in\NN$ are found via
\[ \inj_k\Delta_I =\sum_{n_1+\dots+n_I=k} \Bigl( X^{\tens k} \rTTo^{(\lambda^{\sqcup_{i\in I}n_i\to I})^{-1}} \tens^{i\in I}X^{\tens n_i} \rTTo^{\tens^{i\in I}\inj_{n_i}\;} XT^{\tens I} \Bigr).
\]
They satisfy the identity for \(n=(n_1,\dots,n_I)\in\NN^I\)
\[ \bigl( XT \rTTo^{\Delta_I} XT^{\tens I} \rTTo^{\tens^{i\in I}\pr_{n_i}\;} \tens^{i\in I}X^{\tens n_i} \rTTo^{\lambda^{\sqcup_{i\in I}n_i\to I}} X^{\tens\|n\|} \bigr) =\pr_{\|n\|}.
\]

We claim that the family \(e(I):XT^{\tens I}\to X\tilde T(I)\) defined by the equation
\begin{equation}
e(I)\pr_n = \bigl( XT^{\tens I} \rTTo^{\tens^{i\in I}\pr_{n_i}\;} \tens^{i\in I}X^{\tens n_i} \rTTo^{\lambda^{\sqcup_{i\in I}n_i\to I}} X^{\tens\|n\|} \bigr),
\label{eq-e(I)prn}
\end{equation}
\(n=(n_1,\dots,n_I)\in\NN^I\), is a homotopy coalgebra morphism.
In fact, $e$ is natural, that is, the exterior of the following diagram commutes due to commutativity of the diagram obtained by postcomposing the exterior with \(\pr_n:X\tilde T(I)\to X^{\tens\|n\|}\) for $n\in\NN^I$:
\begin{diagram}[nobalance]
XT^{\tens J} &&&\rTTo^{e(J)} &&&X\tilde T(J)
\\
\dTTo>{\tens^{j\in J}\Delta_{\phi^{-1}j}} &\rdTTo(4,2)>{\tens^{j\in J}\pr_{(\phi_*n)_j}} &&&&\ldTTo(2,4)>{\pr_{\phi_*n}}
\\
&&\tens^{j\in J}\tens^{i\in\phi^{-1}j}X^{\tens n_i} &\rTTo_{\hspace*{-2em}\tens^{j\in J}\lambda^{\sqcup_{i\in\phi^{-1}j}n_i\to\phi^{-1}j}} &\tens^{j\in J}X^{\tens(\phi_*n)_j}
\\
&\ruTTo^{\tens^{j\in J}\tens^{i\in\phi^{-1}j}\pr_{n_i}} &\dTTo>{\lambda^\phi} &&\dTTo<{\lambda^{\sqcup_{j\in J}(\phi_*n)_j\to J}}
\\
\tens^{j\in J}XT^{\tens\phi^{-1}j} &\hspace*{3em} &\tens^{i\in I}X^{\tens n_i} &\rTTo_{\lambda^{\sqcup_{i\in I}n_i\to I}} &X^{\tens\|n\|} &&\dTTo<{X\tilde{T}(\phi^\op)}
\\
\dTTo<{\lambda^\phi} &\ruTTo>{\tens^{i\in I}\pr_{n_i}} &&&&\luTTo<{\pr_n}
\\
XT^{\tens I} &&&\rTTo^{e(I)} &&&X\tilde T(I)
\end{diagram}

Monoidality of $e$ is expressed by the exterior of the following diagram, where \(N=\sqcup_{i\in I}N_i\).
Its commutativity is proven by postcomposing with \(\pr_m:X\tilde T(N)\to X^{\tens\|m\|}\), where \(m=(m_n)_{n\in N}\in\NN^N\) is represented also as \(m^1\oplus\dots\oplus m^I\in\NN^{N_1}\oplus\dots\oplus\NN^{N_I}\), \(m^i=(m^i_{n_i})_{n_i\in N_i}\):
\begin{diagram}
\tens^{i\in I}XT^{\tens N_i} &&&\rTTo^{\lambda^{\sqcup_{i\in I}N_i\to I}} &&&XT^{\tens N}
\\
&\rdTTo>{\tens^{i\in I}\tens^{n_i\in N_i}\pr_{m^i_{n_i}}} &&&&\ldTTo<{\tens^{n\in N}\pr_{m_n}}
\\
&&\tens^{i\in I}\tens^{n_i\in N_i}X^{\tens m^i_{n_i}} &\rTTo^{\lambda^{\sqcup_{i\in I}N_i\to I}} &\tens^{n\in N}X^{\tens m_n}
\\
\dTTo<{\tens^{i\in I}e(N_i)} &&\dTTo>{\tens^{i\in I}\lambda^{\sqcup_{n_i\in N_i}m^i_{n_i}\to N_i}} &&\dTTo>{\lambda^{\sqcup_{n\in N}m_n\to N}} &&\dTTo>{e(N)}
\\
&&\tens^{i\in I}X^{\tens\|m^i\|} &\rTTo_{\lambda^{\sqcup_{i\in I}\|m^i\|\to I}} &X^{\tens\|m\|}
\\
&\ruTTo>{\tens^{i\in I}\pr_{m^i_{n_i}}} &&&&\luTTo<{\pr_m}
\\
\tens^{i\in I}X\tilde T(N_i) &&&\rTTo^{\chi^I} &&&X\tilde T(N)
\end{diagram}
Each trapezium here commutes.
Thus, $e$ is a morphism of homotopy coalgebras.
\end{example}

The following statement is obvious.

\begin{lemma}\label{lem-lax-coalgebra-C-to-G}
Let $(C,\chi)$ be a homotopy coalgebra in a colax Monoidal category.
Let morphisms \(t(n):C(n)\to G(n)\in\cv\), $n\ge0$, be invertible.
Then there is a unique homotopy coalgebra structure on the family \((G(n))_{n\in\NN}\) such that \(t:C\to G\) is an isomorphism of homotopy coalgebras.
\end{lemma}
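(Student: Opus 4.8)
The plan is \emph{transport of structure} along the isomorphisms $t(n)$. Since $t$ is required to be simultaneously natural and monoidal in the sense of~\eqref{eq-dia-Monoidal-trans}, and since each $t(n)$ is invertible, both defining squares can be solved for the structure morphisms of $G$; this forces the structure and so yields uniqueness, while for existence one conjugates the structure of $(C,\chi)$ and checks the axioms, each of which follows mechanically from the corresponding axiom for $(C,\chi)$.

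\emph{Uniqueness.} The naturality square of~\eqref{eq-dia-Monoidal-trans} reads $C(\phi^\op)\cdot t(I)=t(J)\cdot G(\phi^\op)$ for $\phi:I\to J$, so necessarily
\[ G(\phi^\op) =t(J)^{-1}\cdot C(\phi^\op)\cdot t(I). \]
The monoidality square reads $\chi^I\cdot t(\sqcup_{i\in I}N_i)=\bigl(\tens^{i\in I}t(N_i)\bigr)\cdot\gamma^I$; as $\tens^{i\in I}t(N_i)$ is invertible with inverse $\tens^{i\in I}t(N_i)^{-1}$, this forces
\[ \gamma^I_{N_1,\dots,N_I} =\bigl(\tens^{i\in I}t(N_i)^{-1}\bigr)\cdot\chi^I_{N_1,\dots,N_I}\cdot t(\sqcup_{i\in I}N_i). \]
Hence any structure on $(G(n))_{n\in\NN}$ making $t$ an isomorphism is given by these two formulas, proving uniqueness.

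\emph{Existence.} Define $G(\phi^\op)$ and $\gamma^I$ by the displayed formulas. Functoriality of $G$ (that $G(\id)=\id$ and $G(\psi^\op)G(\phi^\op)=G((\phi\psi)^\op)$) follows by cancelling the inner factors $t(J)\cdot t(J)^{-1}=\id$ and invoking functoriality of $C$. The normalisation $\gamma^{\mb1}_N=\id$ follows from $\chi^{\mb1}_N=\id$, the flanking $t(N)^{-1}$ and $t(N)$ cancelling. Naturality~\eqref{dia-CN-CN-CM-CM} of $\gamma^I$ in its arguments follows from naturality of $\chi^I$: substituting the definitions, the inner $t$-factors cancel in pairs, reducing the claim to the naturality square for $\chi^I$ together with functoriality of $\tens$.

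The only step carrying genuine content is the colax-functor equation~\eqref{dia-2phi-1phi-Monoidal-functor}, namely $\lambda^\phi\cdot\gamma^I=\bigl(\tens^{j\in J}\gamma^{\phi^{-1}j}\bigr)\cdot\gamma^J$. I would substitute the definition of $\gamma^I$ on the left and then move the factor $\tens^{i\in I}t(N_i)^{-1}$ past $\lambda^\phi$ using naturality of the colax structure morphism $\lambda^\phi$ with respect to the morphisms $t(N_i)^{-1}\colon G(N_i)\to C(N_i)$, obtaining $\lambda^\phi\cdot\bigl(\tens^{i\in I}t(N_i)^{-1}\bigr)=\bigl(\tens^{j\in J}\tens^{i\in\phi^{-1}j}t(N_i)^{-1}\bigr)\cdot\lambda^\phi$. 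Applying equation~\eqref{dia-2phi-1phi-Monoidal-functor}, already known for $(C,\chi)$, then rewrites the left side purely in terms of $\chi$; on the right side, expanding $\gamma^{\phi^{-1}j}$ and $\gamma^J$ and cancelling the matching factors $t(\sqcup_{i\in\phi^{-1}j}N_i)$ produces exactly the same expression. The bookkeeping of these tensor factors across $\lambda^\phi$ is the main (but entirely routine) obstacle. Once it is carried out, the two squares~\eqref{eq-dia-Monoidal-trans} commute by construction, so $t$ is an isomorphism of homotopy coalgebras, completing the proof.
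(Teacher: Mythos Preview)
Your proof is correct; it is the standard transport-of-structure argument, and the paper in fact gives no proof at all, introducing the lemma with the phrase ``The following statement is obvious.'' Your argument spells out precisely what the author had in mind: conjugate the functor values and the lax-monoidal structure morphisms by the isomorphisms $t(n)$, and observe that each axiom for $(G,\gamma)$ reduces, after cancelling matched $t$--$t^{-1}$ pairs (using naturality of $\lambda^\phi$ and functoriality of $\tens$), to the corresponding axiom for $(C,\chi)$.
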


\begin{proposition}
Let $\cv$ be Monoidal.
A homotopy coalgebra \((C,\chi)\) is isomorphic to an ordinary coalgebra iff all \(\chi^I_{\mb1,\dots,\mb1}\) are invertible.
\end{proposition}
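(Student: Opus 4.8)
The plan is to prove both directions using the transport statement \lemref{lem-lax-coalgebra-C-to-G} together with the characterization of ordinary coalgebras among homotopy coalgebras in \propref{pro-lax-coalgebra-ordinary}. Throughout I would keep in mind that $\chi^I_{\mb1,\dots,\mb1}$ is a morphism $\tens^{i\in I}C(\mb1)\to C(\sqcup_{i\in I}\mb1)=C(I)$, since $\sqcup_{i\in I}\mb1=I$ in $\co_\sk$. For necessity, suppose $t\colon(C,\chi)\to(D,\gamma)$ is an isomorphism of homotopy coalgebras with $(D,\gamma)$ ordinary. Then \propref{pro-lax-coalgebra-ordinary} gives $\gamma^I_{\mb1,\dots,\mb1}=\lambda^{\sqcup_{i\in I}\mb1\to I}=\lambda^{\id_I}=\id$. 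Feeding the all-$\mb1$ arguments into the monoidality square \eqref{eq-dia-Monoidal-trans} for $t$ yields $\chi^I_{\mb1,\dots,\mb1}\cdot t(I)=(\tens^{i\in I}t(\mb1))\cdot\gamma^I_{\mb1,\dots,\mb1}$, so $\chi^I_{\mb1,\dots,\mb1}=(\tens^{i\in I}t(\mb1))\cdot t(I)^{-1}$ is a composite of isomorphisms, hence invertible.

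For sufficiency, assume every $\chi^I_{\mb1,\dots,\mb1}$ is invertible. I would set $G(I)=C(1)^{\tens I}$ and take the invertible morphisms $t(I)=(\chi^I_{\mb1,\dots,\mb1})^{-1}\colon C(I)\to G(I)$, noting $t(\mb1)=\id$ because $\chi^{\mb1}=\id$. By \lemref{lem-lax-coalgebra-C-to-G} there is a unique homotopy coalgebra structure $\gamma$ on $(G(I))_{I}$ making $t\colon(C,\chi)\to(G,\gamma)$ an isomorphism, and its structure maps are forced by \eqref{eq-dia-Monoidal-trans} to be $\gamma^I_{N_1,\dots,N_I}=(\tens^{i\in I}t(N_i))^{-1}\cdot\chi^I_{N_1,\dots,N_I}\cdot t(\sqcup_{i\in I}N_i)$. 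Since $G(I)=G(\mb1)^{\tens I}$ holds by construction, the first half of condition \eqref{eq-CC-chi-lambda} is immediate, and it remains only to identify $\gamma^I_{N_1,\dots,N_I}$ with $\lambda^{\sqcup_{i\in I}N_i\to I}$.

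This identification is the crux, and it is exactly where the associativity coherence \eqref{dia-2phi-1phi-Monoidal-functor} enters. Substituting $t(N_i)^{-1}=\chi^{N_i}_{\mb1,\dots,\mb1}$ and $t(K)=(\chi^K_{\mb1,\dots,\mb1})^{-1}$, where $K=\sqcup_{i\in I}N_i$, into the displayed formula gives $\gamma^I_{N_1,\dots,N_I}=(\tens^{i\in I}\chi^{N_i}_{\mb1,\dots,\mb1})\cdot\chi^I_{N_1,\dots,N_I}\cdot(\chi^K_{\mb1,\dots,\mb1})^{-1}$. I would then apply \eqref{dia-2phi-1phi-Monoidal-functor} to the map $\psi\colon K\to I$ with $\psi^{-1}(i)=N_i$ and with every object in the family taken to be $\mb1$ (so that $\sqcup_{k\in N_i}\mb1=N_i$); it reads $(\tens^{i\in I}\chi^{N_i}_{\mb1,\dots,\mb1})\cdot\chi^I_{N_1,\dots,N_I}=\lambda^\psi\cdot\chi^K_{\mb1,\dots,\mb1}$. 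Hence the $\chi^K_{\mb1,\dots,\mb1}$ factors cancel and $\gamma^I_{N_1,\dots,N_I}=\lambda^\psi=\lambda^{\sqcup_{i\in I}N_i\to I}$, which is precisely the second half of \eqref{eq-CC-chi-lambda}; \propref{pro-lax-coalgebra-ordinary} then identifies $(G,\gamma)$ with an ordinary coalgebra (in the sense of \propref{pro-coalgebra-lax-Monoidal-functor}), and $t$ is the required isomorphism.

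The main obstacle is bookkeeping rather than conceptual: one must instantiate \eqref{dia-2phi-1phi-Monoidal-functor} at exactly the right $\phi=\psi$ and at the all-$\mb1$ family, and keep the identifications $\sqcup_{k\in N_i}\mb1=N_i$ and $\sqcup_{i\in I}\mb1=I$ straight so that the indices on both sides match. The Monoidal hypothesis on $\cv$ is what makes each $\lambda^\phi$ invertible and so lets $\gamma$ be read off as the genuine (co)multiplication data $\lambda^{\sqcup_{i\in I}N_i\to I}$ of an ordinary coalgebra, rather than merely a formal composite.
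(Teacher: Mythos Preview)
Your proof is correct and follows essentially the same approach as the paper: transport the structure along $t(I)=(\chi^I_{\mb1,\dots,\mb1})^{-1}$ via \lemref{lem-lax-coalgebra-C-to-G}, then use the coherence square \eqref{dia-2phi-1phi-Monoidal-functor} with all arguments equal to $\mb1$ to identify $\gamma^I_{N_1,\dots,N_I}$ with $\lambda^{\sqcup_{i\in I}N_i\to I}$, and conclude by \propref{pro-lax-coalgebra-ordinary}. The only cosmetic difference is that the paper applies the coherence to $(G,\gamma)$ after first observing $\gamma^n_{\mb1,\dots,\mb1}=\id$, whereas you apply it to $(C,\chi)$ and cancel the $\chi^K_{\mb1,\dots,\mb1}$ factor; these are the same computation.
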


\begin{proof}
Necessity is obvious.

Assume that all \(\chi^I_{\mb1,\dots,\mb1}\) are invertible.
Define a homotopy coalgebra structure $(G,\gamma)$ on the family \(G(n)=C(1)^{\tens n}\), $n\ge0$, so that the family \(t(n)=(\chi^n_{\mb1,\dots,\mb1})^{-1}:C(n)\to G(n)\) were an isomorphism of homotopy coalgebras, see \lemref{lem-lax-coalgebra-C-to-G}.
Then \(t(1)=\id\) and \(\gamma^n_{\mb1,\dots,\mb1}=\id\) due to the second diagram of \eqref{eq-dia-Monoidal-trans}.
Applying \eqref{eq-CC-chi-lambda} with $N_i=\mb1$ we obtain for an arbitrary \(\phi:I\to J\in\co_\sk\) that \(\gamma^I_{\phi^{-1}1,\dots,\phi^{-1}J}=\lambda^{\phi:I\to J}\).
By \propref{pro-lax-coalgebra-ordinary} $(G,\gamma)$ is an ordinary coalgebra.
\QED\end{proof}

\begin{proposition}\label{pro-functor-Coalg-lCoalg}
The functor $\Coalg\to\hCoalg$ from \propref{pro-coalgebra-lax-Monoidal-functor} is full and faithful.
\end{proposition}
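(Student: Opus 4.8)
The plan is to verify that the functor induces a bijection on each morphism set $\Coalg(C,D)\to\hCoalg(\Phi C,\Phi D)$, where $\Phi$ denotes the functor of \propref{pro-coalgebra-lax-Monoidal-functor}. On morphisms $\Phi$ sends a coalgebra morphism $f:C\to D$ to the Monoidal transformation whose component at $J\in\Ob\co_\sk$ is $f^{\tens J}:C^{\tens J}\to D^{\tens J}$; in particular its value at $1$ is $f$ itself. Faithfulness is then immediate, since one recovers $f$ from $\Phi f$ as the component at $1$, so $f\mapsto\Phi f$ is injective.

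For fullness, let $t:\Phi C\to\Phi D$ be an arbitrary morphism of homotopy coalgebras and set $f=t(1):C\to D$. I first claim that $t(k)=f^{\tens k}$ for every $k\in\NN$. This comes from the monoidality square of \eqref{eq-dia-Monoidal-trans} applied with index set $I=k$ and all $n_i=\mb1$: by \propref{pro-lax-coalgebra-ordinary} (equations~\eqref{eq-CC-chi-lambda}) the two horizontal arrows $\chi^k_{\mb1,\dots,\mb1}$ and $\gamma^k_{\mb1,\dots,\mb1}$ both equal $\lambda^{\id_k}=\id$, so the square collapses to the identity $t(k)=t(1)^{\tens k}=f^{\tens k}$.

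It remains to check that $f$ is a morphism of ordinary coalgebras and that $\Phi f=t$. For the former I would specialise the naturality square of \eqref{eq-dia-Monoidal-trans} to the maps $\con:I\to1$ of $\co_\sk$: the vertical arrows then read $C((\con:I\to1)^\op)=\Delta_I^C$ and $D((\con:I\to1)^\op)=\Delta_I^D$, while the horizontal arrows are $t(1)=f$ and $t(I)=f^{\tens I}$. Commutativity is precisely the equation $f\cdot\Delta_I^D=\Delta_I^C\cdot f^{\tens I}$ for every $I\in\Ob\co_\sk$, which is the defining condition for $f$ to be a morphism of coalgebras; the instance $I=\mb0$ encodes compatibility with the counits $\Delta_{\mb0}$, so no separate argument is needed. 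Since the components of $\Phi f$ are $f^{\tens k}=t(k)$, we conclude $\Phi f=t$, establishing fullness.

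I expect no serious obstacle in this argument; the only points demanding care are the identities $\chi^k_{\mb1,\dots,\mb1}=\id$ and $C((\con:I\to1)^\op)=\Delta_I^C$ furnished by \propref{pro-lax-coalgebra-ordinary} and \propref{pro-coalgebra-lax-Monoidal-functor}, together with the observation that the single family of naturality squares indexed by the maps $\con:I\to1$ already encodes all of the coalgebra-morphism equations at once, so that neither an induction on $I$ nor a separate treatment of the counit is required.
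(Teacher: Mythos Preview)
Your argument is correct and follows essentially the same route as the paper: faithfulness is immediate from the component at $1$, fullness is obtained by first using the monoidality square with all $n_i=\mb1$ (where $\chi^I=\gamma^I=\lambda^{\id}=\id$) to force $t(I)=t(1)^{\tens I}$, and then using naturality along $\con:I\to\mb1$ to conclude that $t(1)$ is a coalgebra morphism. The paper's proof is slightly terser but identical in substance.
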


\begin{proof}
The map on morphisms
\[ \Coalg(C,G) \to \hCoalg(C,G), \quad (t:C\to G) \mapsto (t(I)=t^{\tens I}:C^{\tens I} \to G^{\tens I})_{I\in\NN}
\]
is obviously injective.

Let \(t\in\hCoalg(C,G)\).
Then the second of diagrams~\eqref{eq-dia-Monoidal-trans} with \(n_1=\dots=n_I=1\) shows that \(t(I)=t(1)^{\tens I}\) since \(\chi^I=\lambda^{\id}=\id\) and \(\gamma^I=\lambda^{\id}=\id\).
Since \(C((\con:I\to\mb1)^\op)=\Delta^{\con:I\to\mb1}=\Delta_I:C\to C^{\tens I}\) and similarly for $G$, naturality of $t$ (the first of diagrams~\eqref{eq-dia-Monoidal-trans}) implies that $t(1)$ is a homomorphism of coalgebras.
\QED\end{proof}

\begin{proposition}\label{pro-lCoalg(CXT)-V(CX)}
Let $\cv$ be Monoidal, $C$ be ordinary coalgebra in $\cv$, and $X\in\Ob\cv$.
Then the map
\[ \hCoalg(C,X\tilde{T}) \to \cv(C,X), \qquad t \mapsto \check t =(C \rTTo^{t(1)} X\hat{T} \rTTo^{\pr_1} X),
\]
is bijective.
\end{proposition}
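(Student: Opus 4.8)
The plan is to exhibit the inverse map explicitly and then split the verification into injectivity and surjectivity, both reducing to the universal property of the product $X\tilde T(I)=\prod_{n\in\NN^I}X^{\tens\|n\|}$ together with the coassociativity of the comultiplication of $C$.

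For a given $f=\check t\in\cv(C,X)$ I would define a candidate morphism $t$ by prescribing, for each $I\in\Ob\co_\sk$ and each $n=(n_i)_{i\in I}\in\NN^I$, the component
\[ t(I)\pr_n = \bigl( C^{\tens I} \rTTo^{\tens^{i\in I}\Delta_{n_i}} \tens^{i\in I}C^{\tens n_i} \rTTo^{\tens^{i\in I}f^{\tens n_i}} \tens^{i\in I}X^{\tens n_i} \rTTo^{\lambda^{\sqcup_{i\in I}\mb{n_i}\to I}} X^{\tens\|n\|} \bigr), \]
where $\Delta_{n_i}=C((\con\colon n_i\to\mb1)^\op)$ as in \propref{pro-lax-coalgebra-ordinary}. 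Since $X\tilde T(I)$ is a product, this prescribes a unique $t(I)\colon C(I)=C^{\tens I}\to X\tilde T(I)$, and $\check t=t(1)\pr_1=\Delta_{\mb1}f=f$ because $\Delta_{\mb1}=\id$. Note the strong formal resemblance to the morphism $e$ of \exaref{exa-e:XT-tildeT}: the present $t$ is obtained from $e$ by inserting the comultiplications $\Delta_{n_i}$ and the map $f$.

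For injectivity I would recover the whole of $t$ intrinsically from an arbitrary $t\in\hCoalg(C,X\tilde T)$. First, applying the monoidality square of \eqref{eq-dia-Monoidal-trans} with all $n_i=\mb1$, and using that for the ordinary coalgebra $C$ one has $\chi^I_{\mb1,\dots,\mb1}=\lambda^{\id}=\id$ by \propref{pro-lax-coalgebra-ordinary}, gives $t(I)=t(1)^{\tens I}\cdot\gamma^I_{\mb1,\dots,\mb1}$; thus $t$ is determined by $t(1)$, and post-composing with $\pr_{(1,\dots,1)}$ together with the explicit formula \eqref{eq-chi-pr-X-X-X} for $\gamma=\chi$ of $X\tilde T$ (where $\lambda^{\id}=\id$) yields $t(I)\pr_{(1,\dots,1)}=\check t^{\tens I}$. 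Second, naturality of $t$ (the first square of \eqref{eq-dia-Monoidal-trans}) applied to $\con\colon m\to\mb1$ reads $t(1)\cdot X\tilde T(\con^\op)=\Delta_m\cdot t(m)$; post-composing with $\pr_{(1,\dots,1)}$, using $X\tilde T(\con^\op)\pr_{(1,\dots,1)}=\pr_{\con_*(1,\dots,1)}=\pr_m$ from \eqref{eq-XT(fop)prn} and \eqref{eq-(f*n)j} on the left and the previous identity on the right, produces $t(1)\pr_m=\Delta_m\cdot\check t^{\tens m}$ for every $m$. Hence $t$ is completely reconstructed from $\check t$, so $t\mapsto\check t$ is injective; moreover this computation shows that the reconstruction coincides with the candidate of the previous paragraph.

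It remains to check that the candidate $t$ really is a morphism of homotopy coalgebras, that is, that it satisfies the naturality and monoidality conditions of \eqref{eq-dia-Monoidal-trans}; this is the only genuine work and the expected main obstacle. Naturality in $\phi\colon I\to J$ follows by projecting the relevant square to $\pr_n$ and invoking the functoriality equation \eqref{eq-Delta-CCCC} for $\Delta$ together with the colax coherence of the maps $\lambda$, exactly as in the diagram chase establishing naturality of $e$ in \exaref{exa-e:XT-tildeT}. Monoidality is the harder chase: after projecting to $\pr_m$ it combines coassociativity of $\Delta$ (to split each $\Delta_{\|m^i\|}$ into $\tens\Delta_{m^i_{n_i}}$) with the ``tetrahedron'' coherence axiom for $(\cv,\tens^I,\lambda^\phi)$, in complete parallel with the monoidality computation for $e$ in \exaref{exa-e:XT-tildeT}; the inserted factors $f$ do not interfere, since they form a single horizontal layer $\tens^{i}f^{\tens n_i}$ that passes freely past the structural morphisms $\lambda$. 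Once both are verified, $t\in\hCoalg(C,X\tilde T)$ with $\check t=f$, giving surjectivity and completing the proof.
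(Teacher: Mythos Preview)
Your proposal is correct and follows essentially the same route as the paper. The only cosmetic difference is packaging: the paper writes the candidate as $t(1)=\hat\Delta\cdot\check t\hat T$ (equation~\eqref{eq-t(1)-C-XT}) and then $t(I)=t(1)^{\tens I}\chi^I$ (equation~\eqref{eq-t(I)-CCTXTXT(I)}), whereas you spell out the projections $t(I)\pr_n$ directly; your injectivity argument via monoidality at $n_i=\mb1$ and naturality at $\con\colon m\to\mb1$ is exactly the content of diagram~\eqref{dia-C-C-C-XT-XT-XT-X}, and the surjectivity verification in the paper is the diagram chase you describe.
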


\begin{proof}
First we prove that a morphism \(t:C\to X\tilde{T}\in\hCoalg\) is determined by $\check t$.
For an arbitrary $I\in\NN$ consider the commutative diagram
\begin{diagram}[LaTeXeqno]
C(1) &\rTTo^{t(1)} &X\tilde{T}(1)
\\
\dTTo<{C((\con:I\to\mb1)^\op)} &&\dTTo<{X\tilde{T}(\con^\op)} &\rdTTo^{\pr_I} &
\\
C(I) &\rTTo^{t(I)} &X\tilde{T}(I) &\rTTo^{\pr_{(1,\dots,1)}} &X^{\tens I}
\\
\uId<{\chi^I=\lambda^{\id}=\id} &&\uTTo<{\chi^I} &\ruTTo_{\pr_1^{\tens I}} &
\\
C(1)^{\tens I} &\rTTo^{t(1)^{\tens I}} &X\tilde{T}(1)^{\tens I}
\label{dia-C-C-C-XT-XT-XT-X}
\end{diagram}
where $C(1)=C$, \(X\tilde{T}(1)=X\hat{T}\), of course.
It follows that
\begin{equation}
\bigl( C \rTTo^{t(1)} X\hat{T} \rTTo^{\pr_I} X^{\tens I} \bigr) =\bigl( C \rTTo^{C((\con:I\to\mb1)^\op)}~\parallel_{\Delta_I} C(I) =C^{\tens I} \rTTo^{\check t^{\tens I}} X^{\tens I} \bigr).
\label{eq-CXTX-CC(I)CX}
\end{equation}
Denote by \(\hat\Delta:C\to C\hat{T}\) the only map such that \(\hat\Delta\pr_I=\Delta_I\) for all $I\in\NN$.
Thus,
\begin{equation}
t(1) =\bigl( C \rTTo^{\hat\Delta} C\hat{T} \rTTo^{\check t\hat{T}} X\hat{T} \bigr).
\label{eq-t(1)-C-XT}
\end{equation}
The monoidality of $t$ (the left bottom square of the diagram) gives
\begin{equation}
t(I) =\bigl( C^{\tens I} \rTTo^{t(1)^{\tens I}} X\hat{T}^{\tens I} \rto{\chi^I} X\tilde{T}(I) \bigr) =\bigl( C^{\tens I} \rTTo^{\hat\Delta^{\tens I}} C\hat{T}^{\tens I} \rTTo^{\und t\hat{T}^{\tens I}} X\hat{T}^{\tens I} \rto{\chi^I} X\tilde{T}(I) \bigr).
\label{eq-t(I)-CCTXTXT(I)}
\end{equation}
This proves injectivity of the map \(t\mapsto\check t\).
Bijectivity will be proven when we show that for an arbitrary \(\check t:C\to X\in\cv\) formulae~\eqref{eq-t(I)-CCTXTXT(I)} define a morphism of homotopy coalgebras \(t:C\to X\tilde{T}\).

Naturality of this $t$ means commutativity of the exterior of the following diagram for an arbitrary map \(\phi:I\to J\in\co_\sk\)
\begin{diagram}
C^{\tens J} &&\rTTo^{\hat\Delta^{\tens J}} &&C\hat{T}^{\tens J} &\rTTo^{\check t\hat{T}^{\tens J}} &X\hat{T}^{\tens J} &\rTTo^{\chi^J} &X\tilde{T}(J)
\\
&\rdTTo(4,2)_{\tens^{j\in J}\Delta_{(\phi_*n)_j}} &&&\dTTo>{\tens^{j\in J}\pr_{(\phi_*n)_j}} &&\dTTo<{\tens^{j\in J}\pr_{(\phi_*n)_j}} &\ldTTo(2,4)>{\pr_{\phi_*n}}
\\
\dTTo>{\tens^{j\in J}\Delta_{\phi^{-1}j}} &&&&\tens^{j\in J}C^{\tens(\phi_*n)_j} &\rTTo^{\tens^{j\in J}\check t^{\tens(\phi_*n)_j}} &\tens^{j\in J}X^{\tens(\phi_*n)_j}
\\
&&&\ruTTo(2,4)<{\tens^{j\in J}\lambda^{\sqcup_{i\in\phi^{-1}j}n_i\to\phi^{-1}j}} &\dTTo>{\lambda^{\sqcup_{j\in J}(\phi_*n)_j\to J}} &&\dTTo<{\lambda^{\sqcup_{j\in J}(\phi_*n)_j\to J}}
\\
\tens^{j\in J}C^{\tens\phi^{-1}j}\hspace*{-3.1em} &&&&C^{\tens\|n\|} &\rTTo^{\check t^{\tens\|n\|}} &X^{\tens\|n\|} &&\dTTo<{X\tilde{T}(\phi^\op)}
\\
&\rdTTo>{\tens^{j\in J}\tens^{i\in\phi^{-1}j}\Delta_{n_i}} &&&\uTTo>{\lambda^{\sqcup_{i\in I}n_i\to I}} &&\uTTo<{\lambda^{\sqcup_{i\in I}n_i\to I}} &\luTTo(2,4)>{\pr_n}
\\
\dTTo<{\lambda^\phi} &&\hspace*{-2.1em}\tens^{j\in J}\tens^{i\in\phi^{-1}j}C^{\tens n_i} &\rTTo^{\lambda^\phi\hspace*{-0.6em}} &\tens^{i\in I}C^{\tens n_i} &\rTTo^{\tens^{i\in I}\check t^{\tens n_i}} &\tens^{i\in I}X^{\tens n_i}
\\
&&&\ruTTo(4,2)^{\tens^{i\in I}\Delta_{n_i}} &\uTTo>{\tens^{i\in I}\pr_{n_i}} &&\uTTo<{\tens^{i\in I}\pr_{n_i}}
\\
C^{\tens I} &&\rTTo^{\hat\Delta^{\tens I}} &&C\hat{T}^{\tens I} &\rTTo^{\check t\hat{T}^{\tens I}} &X\hat{T}^{\tens I} &\rTTo^{\chi^I} &X\tilde{T}(I)
\end{diagram}
In order to prove this postcompose the equation between both exterior paths with \(\pr_n:X\tilde{T}(I)\to X^{\tens\|n\|}\) for an arbitrary \(n=(n_1,\dots,n_I)\in\NN^I\).
The obtained tadpole partitioned into commutative cells as in the above diagram clearly commutes.

Monoidality of $t$ is expressed by the exterior of the following commutative diagram
\begin{diagram}
\tens^{i\in I}C^{\tens n_i} &\rTTo^{\tens^{i\in I}\hat\Delta^{\tens n_i}} &\tens^{i\in I}C\hat{T}^{\tens n_i} &\rTTo^{\tens^{i\in I}\check t\hat{T}^{\tens n_i}} &\tens^{i\in I}X\hat{T}^{\tens n_i} &\rTTo^{\tens^{i\in I}\chi^{n_i}} &\tens^{i\in I}X\tilde{T}(n_i)
\\
\dTTo>{\lambda^{\sqcup_{i\in I}n_i\to I}} &&\dTTo>{\lambda^{\sqcup_{i\in I}n_i\to I}} &&\dTTo>{\lambda^{\sqcup_{i\in I}n_i\to I}} &&\dTTo>{\chi^I}
\\
C^{\tens\|n\|} &\rTTo^{\hat\Delta^{\tens\|n\|}} &C\hat{T}^{\tens\|n\|} &\rTTo^{\check t\hat{T}^{\tens\|n\|}} &X\hat{T}^{\tens\|n\|} &\rTTo^{\chi^{\|n\|}} &X\tilde{T}(\|n\|)
\end{diagram}
Therefore, \((t(I))_{I\in\NN}\) is a homotopy coalgebra morphism.
\QED\end{proof}

Thanks to the above proposition we call $X\tilde{T}$, $X\in\Ob\cv$, a relatively cofree homotopy coalgebra.

\begin{proposition}\label{pro-f-XT-Y-defines-f}
Let a morphism \(f=(f_k:X^{\tens k}\to Y)_{k\in\NN}:XT\to Y\in\cv\) have finite support, that is, there exists \(K\in\NN\) such that $f_k=0$ for all $k>K$.
For each $I\in\co_\sk$ define a morphism \(\hat{f}(I):X\tilde{T}(I)\to Y\tilde{T}(I)\) by the equations, $n\in\NN^I$, \([K]=\{0,1,\dots,K\}\subset\NN\),
\begin{multline}
\bigl\langle X\tilde{T}(I) \rTTo^{\hat{f}(I)} Y\tilde{T}(I) \rto{\pr_n} Y^{\tens\|n\|} \bigr\rangle
\\
\hskip\multlinegap =\sum_{k\in[K]^{\|n\|}} \bigl\langle X\tilde{T}(I) \rTTo^{X\tilde{T}((\sqcup_{i\in I}n_i\to I)^\op)} X\tilde{T}(\|n\|) \rto{\pr_k} X^{\tens\|k\|} \hfill
\\
\hfill \rTTo^{(\lambda^{\sqcup_{p\in\|n\|}k_p\to\|n\|})^{-1}} \tens^{p\in\|n\|}X^{\tens k_p} \rTTo^{\tens^{p\in\|n\|}f_{k_p}} Y^{\tens\|n\|} \bigr\rangle \quad
\\
\hskip\multlinegap =\sum_{k\in[K]^{\|n\|}} \bigl\langle X\tilde{T}(I) \rTTo^{\pr_{(\sqcup_{i\in I}n_i\to I)_*k}} X^{\tens\|k\|} \rTTo^{(\lambda^{\sqcup_{p\in\|n\|}k_p\to\|n\|})^{-1}} \tens^{p\in\|n\|}X^{\tens k_p}
\ifx\chooseClass1
\hfill \\
\fi
\rTTo^{\tens^{p\in\|n\|}f_{k_p}} Y^{\tens\|n\|} \bigr\rangle.
\label{eq-XT(I)-f(I)-YT(I)-pr-Y}
\end{multline}
By definition \eqref{eq-(f*n)j} for each \(k\in\NN^{\|n\|}\) there is \((\sqcup_{i\in I}n_i\to I)_*k\in\NN^I\) with coordinates
\begin{equation}
((\sqcup_{i\in I}n_i\to I)_*k)_i =\sum_{p\in n_i\hookrightarrow\|n\|} k_p, \qquad i\in I,
\label{eq-nIk-k-iI}
\end{equation}
\begin{multline*}
(\sqcup_{i\in I}n_i\to I)_*k
\ifx\chooseClass1
\\
\fi
=(k_1+\dots+k_{n_1},k_{n_1+1}+\dots+k_{n_1+n_2},k_{n_1+n_2+1}+\dots+k_{n_1+n_2+n_3},\dots).
\end{multline*}
Then the family \(\hat{f}:X\tilde{T}\to Y\tilde{T}\) is a morphism of homotopy coalgebras.
\end{proposition}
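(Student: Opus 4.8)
The plan is to exploit that every object $Y\tilde{T}(L)=\prod_{n\in\NN^L}Y^{\tens\|n\|}$ is a product, so a morphism into it is uniquely determined by its composites with the projections $\pr_n$. Consequently both conditions defining a morphism of homotopy coalgebras (naturality and monoidality, the two squares of \eqref{eq-dia-Monoidal-trans}) are equalities of morphisms into products and may be checked after postcomposition with each $\pr_n$. First I would record that the two right-hand sides of \eqref{eq-XT(I)-f(I)-YT(I)-pr-Y} coincide: this is immediate from the definition \eqref{eq-XT(fop)prn} of $X\tilde{T}$ on morphisms, which gives $X\tilde{T}((\sqcup_{i\in I}n_i\to I)^\op)\pr_k=\pr_{(\sqcup_{i\in I}n_i\to I)_*k}$. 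The finite-support hypothesis $f_k=0$ for $k>K$ is what makes the indexing set $[K]^{\|n\|}$ finite, so that the sum is a genuine finite sum and $\hat{f}(I)\pr_n$ factors through finitely many projections of $X\tilde{T}(I)$; thus $\hat{f}(I)$ is a well-defined morphism of $\cv$.

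For naturality I would fix $\phi\colon I\to J\in\co_\sk$ and prove $X\tilde{T}(\phi^\op)\hat{f}(I)=\hat{f}(J)\,Y\tilde{T}(\phi^\op)$ by postcomposing with $\pr_n$, $n\in\NN^I$. On the right, $Y\tilde{T}(\phi^\op)\pr_n=\pr_{\phi_*n}$, so $\hat{f}(J)\,Y\tilde{T}(\phi^\op)\pr_n=\hat{f}(J)\pr_{\phi_*n}$ expands, via \eqref{eq-XT(I)-f(I)-YT(I)-pr-Y}, into a sum over $k\in[K]^{\|\phi_*n\|}=[K]^{\|n\|}$. On the left one expands $\hat{f}(I)\pr_n$ first and then precomposes with $X\tilde{T}(\phi^\op)$. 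The two sums are matched by the transitivity of the canonical maps, namely that the composite $\sqcup_{i\in I}n_i\to I\xrightarrow{\phi}J$ equals $\sqcup_{j\in J}(\sqcup_{i\in\phi^{-1}j}n_i)\to J$, together with the functoriality $(\psi\phi)_*=\psi_*\phi_*$ of the pushforward established in \exaref{exa-lax-comonoid-tilde-T}. These identify the index $(\sqcup_{i\in I}n_i\to I)_*k$ appearing on one side with the index produced on the other, so the two finite sums agree term by term.

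For monoidality I would fix $I$ and $N_1,\dots,N_I\in\Ob\co_\sk$, set $N=\sqcup_{i\in I}N_i$, and prove the equality $(\tens^{i\in I}\hat{f}(N_i))\,\chi^I_{N_1,\dots,N_I}=\chi^I_{N_1,\dots,N_I}\,\hat{f}(N)$, where $\chi^I_{N_1,\dots,N_I}$ denotes the structure map of $Y\tilde{T}$ on the left and of $X\tilde{T}$ on the right. I would postcompose with $\pr_m\colon Y\tilde{T}(N)\to Y^{\tens\|m\|}$, writing $m=m^1\oplus\dots\oplus m^I\in\NN^{N_1}\oplus\dots\oplus\NN^{N_I}$ exactly as in \exaref{exa-e:XT-tildeT}. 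Unfolding $\chi^I$ through \eqref{eq-chi-pr-X-X-X} and the two copies of $\hat{f}$ through \eqref{eq-XT(I)-f(I)-YT(I)-pr-Y}, both composites become finite sums indexed by tuples $k$; the single sum over $k\in[K]^{\|m\|}$ coming from $\hat{f}(N)$ factors as the product of the sums over the $k^i\in[K]^{\|m^i\|}$ coming from the individual $\hat{f}(N_i)$, because $\|m\|=\sum_i\|m^i\|$ and $[K]^{\|m\|}=\prod_i[K]^{\|m^i\|}$. Matching the accompanying coherence isomorphisms then reduces to the associativity (``tetrahedron'') coherence of $\lambda^\phi$ for the colax Monoidal category, used for the appropriate refinement of $N$, precisely as in the verification carried out in \exaref{exa-e:XT-tildeT}.

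I expect the monoidality step to be the main obstacle. Naturality is essentially the functoriality $(\psi\phi)_*=\psi_*\phi_*$ repackaged, but monoidality requires simultaneously (i) bookkeeping the factorization of the index set $[K]^{\|m\|}$ along the partition $N=\sqcup_i N_i$ and (ii) threading the various structure isomorphisms $\lambda^\phi$ and projections so that the coherence identity applies in the right form; keeping the tensor factors and their indices aligned through these identifications is where the care is needed, though no genuinely new ingredient beyond \exaref{exa-lax-comonoid-tilde-T} is required.
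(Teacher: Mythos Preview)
Your proposal is correct and follows essentially the same route as the paper: both verify naturality and monoidality by postcomposing with a projection $\pr_n$ (respectively $\pr_m$), reduce naturality to the functoriality identity $\phi_*(\sqcup_{i\in I}n_i\to I)_*=(\sqcup_{j\in J}(\phi_*n)_j\to J)_*$, and reduce monoidality to the factorization $[K]^{\|m\|}\cong\prod_i[K]^{\|m^i\|}$ together with the coherence of the $\lambda^\phi$. The paper's proof is just a diagrammatic rendering of exactly the argument you outline, including the observation that the top trapezium commutes because $(\sqcup_{n\in N}m_n\to N)_*k$ decomposes as the direct sum of the $(\sqcup_{n_i\in N_i}m^i_{n_i}\to N_i)_*k^i$.
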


\begin{proof}
Equality of the two expressions for \(\hat{f}(I)\pr_n\) follows from definition \eqref{eq-XT(fop)prn}.

For an arbitrary map \(\phi:I\to J\in\co_\sk\) we have to prove naturality of $\hat{f}$, expressed by the exterior of the following diagram.
After postcomposing with \(\pr_n:Y\tilde{T}(I)\to Y^{\tens\|n\|}\), $n\in\NN^I$, the diagram becomes commutative:
\begin{diagram}
X\tilde{T}(J) &&&\rTTo_{\hat{f}(J)} &&&Y\tilde{T}(J)
\\
&\rdTTo>{\pr_{(\sqcup_{j\in J}(\phi_*n)_j\to J)_*k}} &&&&\ldTTo<{\pr_{\phi_*n}}
\\
\dTTo>{X\tilde{T}(\phi^\op)} &&\sum_{k\in[K]^{\|n\|}} \Bigl\langle X^{\tens\|k\|} &\rto[\hspace*{-0.7em}(\lambda^{\sqcup_{p\in\|n\|}k_p\to\|n\|})^{-1}\hspace*{-0.7em}]{} \tens^{p\in\|n\|}X^{\tens k_p} \rto[\hspace*{-0.7em}\tens^{p\in\|n\|}f_{k_p}\hspace*{-0.7em}]{} &Y^{\tens\|n\|} \Bigr\rangle &&\dTTo<{Y\tilde{T}(\phi^\op)}
\\
&\ruTTo>{\pr_{(\sqcup_{i\in I}n_i\to I)_*k}} &&&&\luTTo<{\pr_n}
\\
X\tilde{T}(I) &&&\rTTo_{\hat{f}(I)} &&&Y\tilde{T}(I)
\end{diagram}
Notice that
\[ \phi_*(\sqcup_{i\in I}n_i\to I)_* =(\sqcup_{i\in I}n_i\to I\rto\phi J)_* =(\sqcup_{j\in J}(\phi_*n)_j\to J)_*,
\]
which implies commutativity of the left triangle.

Monoidality of $\hat{f}$ means commutativity of the exterior of the following diagram.
Its postcomposition with \(\pr_m:Y\tilde{T}(N)\to Y^{\tens\|m\|}\), \(m=(m_n)\in\NN^N\), \(N=\sqcup_{i\in I}N_i\), partitioned into commutative cells, involves two sums of braced paths with equal number of summands, indexed by \((k^1,\dots,k^I)\in[K]^{\|m^1\|}\times\dots\times[K]^{\|m^I\|}\) and \(k\in[K]^{\|m\|}\) corresponding to each other, \(m=m^1\oplus\dots\oplus m^I\in\NN^{N_1}\oplus\dots\oplus\NN^{N_I}\):
\begin{diagram}[nobalance,h=2.2em,w=2.2em]
\tens^{i\in I}X\tilde{T}(N_i) &&&\rTTo^{\chi^I} &&&X\tilde{T}(N)
\\
&\rdTTo>{\overbrace{\sss\tens^{i\in I}\pr_{(\sqcup_{n_i\in N_i}m^i_{n_i}\to N_i)_*k^i}}} &&\vphantom{\sum^1_1} &&\ldTTo<{\overbrace{\sss\pr_{(\sqcup_{n\in N}m_n\to N)_*k}}}
\\
&\hspace*{-1.5em}\sum_{k^1\in[K]^{\|m^1\|}} \hspace*{-.5em}\dots\hspace*{-.5em}\sum_{k^I\in[K]^{\|m^I\|}}\hspace*{-1em} &\tens^{i\in I}X^{\tens\|k^i\|} &\rTTo^{\hspace*{-2.4em}\lambda^{\sqcup_{i\in I}\|k^i\|\to I}\hspace*{-2.4em}} &X^{\tens\|k\|} &\hspace*{-1em}\sum_{k\in[K]^{\|m\|}}\hspace*{-0em}
\\
&&\dTTo<{\tens^{i\in I}(\lambda^{\sqcup_{p\in\|m^i\|}k^i_p\to\|m^i\|})^{-1}} &\vphantom\sum &\dTTo>{(\lambda^{\sqcup_{p\in\|m\|}k_p\to\|m\|})^{-1}}
\\
\dTTo>{\tens^{i\in I}\hat{f}(N_i)} &&\hspace*{-2.3em}\tens^{i\in I}\tens^{p\in\|m^i\|}X^{\tens k^i_p} &\rTTo^{\hspace*{-2.4em}\lambda^{\sqcup_{i\in I}\|m^i\|\to I}\hspace*{-2.4em}} &\tens^{p\in\|m\|}X^{\tens k_p}\hspace*{-2.3em} &&\dTTo<{\hat{f}(N)}
\\
&&\dTTo<{\tens^{i\in I}\tens^{p\in\|m^i\|}f_{k^i_p}} &&\dTTo>{\tens^{p\in\|m\|}f_{k_p}}
\\
&&\underbrace{\tens^{i\in I}Y^{\tens\|m^i\|}} &\rTTo^{\hspace*{-2.4em}\lambda^{\sqcup_{i\in I}\|m^i\|\to I}\hspace*{-2.4em}} &\underbrace{Y^{\tens\|m\|}}
\\
&\ruTTo>{\tens^{i\in I}\pr_{m^i}} &&&&\luTTo<{\pr_m}
\\
\tens^{i\in I}Y\tilde{T}(N_i) &&&\rTTo^{\chi^I} &&&Y\tilde{T}(N)
\end{diagram}
Notice that \((\sqcup_{n\in N}m_n\to N)_*k\in\NN^N\) is the sum of \((\sqcup_{n_i\in N_i}m^i_{n_i}\to N_i)_*k^i\in\NN^{N_i}\) under identification \(\NN^N\simeq\NN^{N_1}\oplus\dots\oplus\NN^{N_I}\), since
\[ ((\sqcup_{n\in N}m_n\to N)_*k)_n =\hspace*{-1em} \sum_{p\in m_n\hookrightarrow\|m\|} k_p, \quad ((\sqcup_{n_i\in N_i}m^i_{n_i}\to N_i)_*k^i)_{n_i} =\hspace*{-1em} \sum_{p\in m^i_{n_i}\hookrightarrow\|m^i\|} k^i_p.
\]
Therefore, the top trapezium commutes.
Thus, $\hat{f}$ is a homotopy coalgebra morphism.
\QED\end{proof}

\begin{remark}
In the particular case $I=1$ \propref{pro-f-XT-Y-defines-f} gives for $n\in\NN$
\[ \hat{f}(1)\pr_n =\sum_{k\in[K]^n} \Bigl\langle X\hat{T} \rTTo^{\pr_{\|k\|}} X^{\tens\|k\|} \rTTo^{(\lambda^{\sqcup_{p\in n}k_p\to n})^{-1}} \tens^{p\in n}X^{\tens k_p} \rTTo^{\tens^{p\in n}f_{k_p}} Y^{\tens n} \Bigr\rangle.
\]
\end{remark}

\begin{example}\label{exa-XT-eX-X-tilde-T}
Given a morphism \(f:XT\to Y\) with finite support in the sense of \propref{pro-f-XT-Y-defines-f} such that $f_0=0$ one produces a homomorphism of coalgebras \(\hat{f}:XT\to YT\) given by the explicit formula (ignoring $\lambda$)
\begin{equation}
\inj_m\hat{f}\pr_n =\sum_{k_1+\dots+k_n=m} f_{k_1}\tdt f_{k_n}: X^{\tens m} \to Y^{\tens n}.
\label{eq-inm-f-prn-sum}
\end{equation}
We claim that there is a commutative square of homotopy coalgebra morphisms
\begin{diagram}
XT &\rTTo^{e_X} &X\tilde{T}
\\
\dTTo<{\hat{f}} &= &\dTTo>{\hat{f}}
\\
YT &\rTTo^{e_Y} &Y\tilde{T}
\end{diagram}
In fact, due to \propref{pro-lCoalg(CXT)-V(CX)} it suffices to verify that
\[ \bigl(XT \rTTo^{\hat{f}} YT \rTTo^{e_Y(1)} Y\hat{T} \rTTo^{\pr_1} Y\bigr) =\bigl(XT \rTTo^{e_X(1)} X\hat{T} \rTTo^{\hat{f}(1)} Y\hat{T} \rTTo^{\pr_1} Y\bigr).
\]
One easily finds that the both sides are equal to \(f:XT\to Y\).

Notice that the morphism \(XT\rTTo^{e_X} X\tilde{T}\rTTo^{\hat{f}} Y\tilde{T}\) exists without the assumption $f_0=0$.
Its component \(XT\rTTo^{e_X(1)} X\hat{T}\rTTo^{\hat{f}(1)} Y\hat{T}\rTTo^{\pr_n} Y^{\tens n}\) is
\[ \sum_{k\in[K]^n} \Bigl\langle XT \rTTo^{\pr_{\|k\|}} X^{\tens\|k\|} \rTTo^{(\lambda^{\sqcup_{p\in n}k_p\to n})^{-1}} \tens^{p\in n}X^{\tens k_p} \rTTo^{\tens^{p\in n}f_{k_p}} Y^{\tens n} \Bigr\rangle,
\]
which coincides with \eqref{eq-inm-f-prn-sum} for $k_p\ge0$.
Furthermore, the composition \(e_X\hat{f}\) coincides with the morphism \(\hat{f}:XT\to Y\tilde{T}\) obtained from \(f:XT\to Y\) via \propref{pro-lCoalg(CXT)-V(CX)}.
In fact,
\[ \sum_{k\in\NN} \bigl\langle XT \rTTo^{\pr_k} X^{\tens k} \rTTo^{f_k} Y \bigr\rangle =f.
\]
The morphism \(\hat{f}:XT\to Y\tilde{T}\) is well defined for an arbitrary \(f:XT\to Y\), not necessarily having a finite support.
\end{example}

We say that a morphism \(\phi:\prod_{\alpha\in A}M_\alpha\to N\in\cv\) is \emph{supported on a subset} $B\subset A$ if $\phi$ admits a presentation
\begin{equation}
\prod_{\alpha\in A}M_\alpha \rEpi \prod_{\alpha\in B}M_\alpha \rTTo^{\text`\phi\text'} N.
\label{eq-MM-phi-N}
\end{equation}
The first arrow is a split epimorphism, hence the morphism `$\phi$' is unique.
If $B$ is finite,
\[ \text`\phi\text' =\bigl( \prod_{\alpha\in B}M_\alpha \rto\sim \bigoplus_{\alpha\in B}M_\alpha \rto\sim \coprod_{\alpha\in B}M_\alpha \to N \bigr)
\]
reduces to a family of components \(\text`\phi\text'_\alpha:M_\alpha\to N\), $\alpha\in B$.

For instance, due to \eqref{eq-nIk-k-iI} map~\eqref{eq-XT(I)-f(I)-YT(I)-pr-Y} is supported on
\[ [Kn] =\{ l\in\NN^I \mid l\le Kn \} =\{ l\in\NN^I \mid \forall i\in I\; l_i\le Kn_i \}.
\]

\begin{proposition}\label{pro-f-supported-on[Kn]}
Let $K\in\NN$.
Let a morphism \(\hat{f}:X\tilde{T}\to Y\tilde{T}\in\hCoalg\) be such that for all \(n\in\NN^I\) the morphism \(X\tilde{T}(I)\rTTo^{\hat{f}(I)} Y\tilde{T}(I)\rto{\pr_n} Y^{\tens\|n\|}\) is supported on $[Kn]$.
Then there is a unique morphism \(f:XT\to Y\) supported on $[K]$ such that \(\hat{f}\) is obtained from $f$ as in \eqref{eq-XT(I)-f(I)-YT(I)-pr-Y}.
\end{proposition}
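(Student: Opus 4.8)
This is the converse of \propref{pro-f-XT-Y-defines-f}, so the plan is to read off $f$ from the lowest component of $\hat f$ and then to check that the given $\hat f$ coincides with the morphism $\hat f'$ that \propref{pro-f-XT-Y-defines-f} attaches to this $f$. To define $f$, I would evaluate the desired identity \eqref{eq-XT(I)-f(I)-YT(I)-pr-Y} at $I=\mb1$, $n=\mb1$: it forces $\hat f(1)\pr_1=\sum_{k=0}^K\pr_kf_k$. The hypothesis that $\hat f(1)\pr_1:X\hat T\to Y$ is supported on $[K\cdot\mb1]=[K]$ means precisely that it factors through $\prod_{k\le K}X^{\tens k}\cong\coprod_{k\le K}X^{\tens k}$, hence decomposes into components $f_k:X^{\tens k}\to Y$ for $k\le K$; put $f_k=0$ for $k>K$. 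Precomposing with the canonical sections $\inj_j:X^{\tens j}\to X\hat T$ (those with $\inj_j\pr_m=\delta_{jm}\id$) recovers $f_j=\inj_j\hat f(1)\pr_1$, so these components are forced and $f$, supported on $[K]$, is unique.

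For existence, let $\hat f'\in\hCoalg(X\tilde T,Y\tilde T)$ be the morphism produced from $f$ by \propref{pro-f-XT-Y-defines-f}; this is legitimate since $f$ has finite support, and by construction $\hat f'(I)\pr_n$ is supported on $[Kn]$ for every $I$ and $n\in\NN^I$ --- exactly the property the hypothesis grants to $\hat f(I)\pr_n$. The workhorse is the coalgebra morphism $e_X:XT\to X\tilde T$ of \exaref{exa-e:XT-tildeT}. Both $e_X\hat f$ and $e_X\hat f'$ lie in $\hCoalg(XT,Y\tilde T)$ with $XT$ an ordinary coalgebra, so \propref{pro-lCoalg(CXT)-V(CX)} applies: each is determined by its component $(-)(1)\pr_1$. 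A one-line computation using $e_X(1)\pr_k=\pr_k$ and $\hat f(1)\pr_1=\hat f'(1)\pr_1=\sum_k\pr_kf_k$ yields $\check{(e_X\hat f)}=f=\check{(e_X\hat f')}$, whence $e_X\hat f=e_X\hat f'$, i.e. $e_X(I)\hat f(I)=e_X(I)\hat f'(I)$ for every $I$.

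The remaining and genuinely new step is to promote $e_X(I)\hat f(I)=e_X(I)\hat f'(I)$ to $\hat f(I)\pr_n=\hat f'(I)\pr_n$, and this is where the support condition is indispensable. Writing $\hat f(I)\pr_n=\sum_{l\in[Kn]}\pr_lg_l$ and $\hat f'(I)\pr_n=\sum_{l\in[Kn]}\pr_lg'_l$ --- \emph{finite} sums because $[Kn]$ is finite --- one notes that each $e_X(I)\pr_l=(\tens^{i\in I}\pr_{l_i})\lambda^{\sqcup_{i\in I}l_i\to I}$ is a split epimorphism with section $s_l=(\lambda^{\sqcup_{i\in I}l_i\to I})^{-1}(\tens^{i\in I}\inj_{l_i})$. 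The orthogonality $\inj_{l_i}\pr_{l'_i}=\delta_{l_i,l'_i}\id$ then gives $g_l=s_l\,e_X(I)\hat f(I)\pr_n$ and $g'_l=s_l\,e_X(I)\hat f'(I)\pr_n$, so the equality $e_X(I)\hat f(I)=e_X(I)\hat f'(I)$ forces $g_l=g'_l$ for all $l\in[Kn]$ and hence $\hat f=\hat f'$. The main obstacle is exactly this detection argument: $e_X$ is by no means epimorphic, and it is only the finiteness of $[Kn]$ that turns the relevant infinite products into coproducts and lets the sections $s_l$ isolate the individual components.
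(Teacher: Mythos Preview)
Your proof is correct, and it takes a route that differs from the paper's in its organisation, though the underlying combinatorics are closely related.

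The paper does not form $\hat f'$ and compare; instead it argues uniqueness directly. From the naturality and monoidality squares for $\hat f$ (the analogue of diagram~\eqref{dia-XT-XT-XT-YT-YT-YT-Y}) it extracts two commutative pentagons. Using the support hypothesis, each pentagon factors through the finite product $\prod_{l\in[Kn]}X^{\tens\|l\|}$, and the paper observes that the finite-level structure maps $g$ (from \eqref{eq-hprn-gprn}) and $j$ (from \eqref{eq-jprm-XXX}) are isomorphisms --- essentially because $\chi^I$ restricted to a finite box is invertible. This forces $\text`\hat f(1)\pr_I\text'$ and then $\text`\hat f(I)\pr_n\text'$ to be given by the explicit formula, hence to agree with \eqref{eq-XT(I)-f(I)-YT(I)-pr-Y}.

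Your approach is more modular: you outsource the coalgebra-theoretic part to \propref{pro-lCoalg(CXT)-V(CX)} via the precomposition with $e_X$, and then run a clean detection argument using the sections $s_l$. What your approach buys is conceptual clarity --- the reduction to the already-established relative cofreeness is a natural move --- whereas the paper's direct argument avoids constructing $\hat f'$ and makes the role of the finite-level isomorphisms $g,j$ explicit, which is reused verbatim in the coderivation analogue (\propref{pro-(fg)-coderivation-xi-obtained-from-check-xi}). Your section argument would adapt equally well to that case.
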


\begin{proof}
Since \(f=\bigl\langle XT\rTTo^{e(1)} X\tilde{T}(1)\rTTo^{\hat{f}(1)} Y\tilde{T}(1)\rto{\pr_1} Y\bigr\rangle\) uniqueness of $f$ is obvious.
Let us prove that $\hat{f}$ is restored from this $f$ via equation~\eqref{eq-XT(I)-f(I)-YT(I)-pr-Y}.
Similarly to \eqref{dia-C-C-C-XT-XT-XT-X} there is a commutative diagram
\begin{diagram}[LaTeXeqno]
X\hat{T} &\rTTo^{\hat{f}(1)} &Y\hat{T}
\\
\dTTo<{X\tilde{T}((\con:I\to\mb1)^\op)} &&\dTTo<{Y\tilde{T}(\con^\op)} &\rdTTo^{\pr_I} &
\\
X\tilde{T}(I) &\rTTo^{\hat{f}(I)} &Y\tilde{T}(I) &\rTTo^{\pr_{(1,\dots,1)}} &Y^{\tens I}
\\
\uTTo<{\chi^I} &&\uTTo<{\chi^I} &\ruTTo_{\pr_1^{\tens I}} &
\\
X\hat{T}^{\tens I} &\rTTo^{\hat{f}(1)^{\tens I}} &Y\hat{T}^{\tens I}
\label{dia-XT-XT-XT-YT-YT-YT-Y}
\end{diagram}
In particular, both above pentagons commute.
All three compositions \(\hat{f}(-)\pr_-\) in this diagram can be written in form~\eqref{eq-MM-phi-N} by hypothesis.
The above pentagons take the form
\begin{diagram}[h=2.4em,nobalance]
X\hat{T} &\rEpi &\prod_{m\in[KI]}X^{\tens m}
\\
\dTTo<{X\tilde{T}((\con:I\to\mb1)^\op)} &&\dTTo<h &\rdTTo^{\text`\hat{f}(1)\pr_I\text'} &
\\
X\tilde{T}(I) &\rEpi &\prod_{n\in[K]^I}X^{\tens\|n\|} &\rTTo^{\text`\hat{f}(I)\pr_{(1,\dots,1)}\text'\qquad} &Y^{\tens I}
\\
\uTTo<{\chi^I} &&\uTTo<g &\ruTTo_{\text`f\text'^{\tens I}} &
\\
X\hat{T}^{\tens I} &\rEpi &\bigl(\prod_{m\in[K]}X^{\tens m}\bigr)^{\tens I}
\end{diagram}
The lower horizontal arrow being a tensor product of split epimorphisms is a split epimorphism itself.
The both triples of neighbour arrows on the left close up to a commutative square by new morphisms $h$ and $g$.
Namely, for all $n\in[K]^I$
\begin{equation}
\begin{split}
h\pr_n &=\pr_{\|n\|}: \prod_{m\in[KI]}X^{\tens m} \to X^{\tens\|n\|},
\\
g\pr_n &=\Bigl( \bigotimes_{i\in I} \prod_{m_i\in[K]}X^{\tens m_i} \rTTo^{\tens^{i\in I}\pr_{n_i}\;} \bigotimes_{i\in I} X^{\tens n_i} \rTTo^\lambda X^{\tens\|n\|} \Bigr),
\end{split}
\label{eq-hprn-gprn}
\end{equation}
see \eqref{eq-XT(fop)prn} and \eqref{eq-chi-pr-X-X-X}.
Since the top and the bottom horizontal arrows are epimorphisms, both triangles on the right commute.
Notice that $g$ is an isomorphism!
Therefore, \(\text`\hat{f}(1)\pr_I\text'=h\cdot g^{-1}\cdot\text`f\text'^{\tens I}\) is determined in a unique way as well as
\begin{multline}
\hat{f}(1)\pr_I =\Bigl\langle X\hat{T} \rTTo^{X\tilde{T}((\con:I\to\mb1)^\op)} X\tilde{T}(I) \rEpi \prod_{n\in[K]^I}X^{\tens\|n\|} \rTTo^{g^{-1}}_\sim \bigl(\prod_{m\in[K]}X^{\tens m}\bigr)^{\tens I}
\\
\rTTo^{\text`f\text'^{\tens I}} Y^{\tens I} \Bigr\rangle.
\label{eq-f(1)prI}
\end{multline}
Thus, all morphisms $\hat{f}$ that satisfy the hypothesis of proposition with fixed \(f=\hat{f}(1)\pr_1\) have the same $\hat{f}(1)$.
Clearly it is given by \eqref{eq-XT(I)-f(I)-YT(I)-pr-Y} for $I=1$.

The lower commutative square in diagram~\eqref{dia-XT-XT-XT-YT-YT-YT-Y} can be written as the pentagon commutative for arbitrary $n\in\NN^I$ in
\begin{diagram}
X\tilde{T}(I) &\rTTo^{\hat{f}(I)} &Y\tilde{T}(I) &\rTTo^{\pr_n} &Y^{\tens\|n\|}
\\
\uTTo<{\chi^I} &&&\ruTTo^{\chi^I\pr_n} &\uTTo>\lambda
\\
X\hat{T}^{\tens I} &\rTTo_{\hat{f}(1)^{\tens I}} &Y\hat{T}^{\tens I} &\rTTo_{\tens^{i\in I}\pr_{n_i}\;} &\otimes^{i\in I} Y^{\tens n_i}
\end{diagram}
By hypothesis the top and the bottom pairs of horizontal arrows can be presented in form~\eqref{eq-MM-phi-N}.
Thus the exterior of the diagram
\begin{diagram}[h=2.4em]
X\tilde{T}(I) &\rEpi &\prod_{m\in[Kn]}X^{\tens\|m\|} &\rTTo^{\text`\hat{f}(I)\pr_n\text'} &Y^{\tens\|n\|}
\\
\uTTo<{\chi^I} &&\uTTo<j &&\uTTo>\lambda
\\
X\hat{T}^{\tens I} &\rEpi &\bigotimes_{i\in I} \prod_{k_i\in[Kn_i]}X^{\tens k_i} &\rTTo^{\tens^{i\in I}\text`\hat{f}(1)\pr_{n_i}\text'\;} &\bigotimes_{i\in I} Y^{\tens n_i}
\end{diagram}
commutes.
There is an arrow $j$ which closes up the three arrows on the left to a commutative square, namely for all $m\in[Kn]$
\begin{equation}
j\pr_m =\Bigl( \bigotimes_{i\in I} \prod_{k_i\in[Kn_i]}X^{\tens k_i} \rTTo^{\tens^{i\in I}\pr_{m_i}\;} \bigotimes_{i\in I} X^{\tens m_i} \rTTo^\lambda X^{\tens\|m\|} \Bigr),
\label{eq-jprm-XXX}
\end{equation}
see \eqref{eq-chi-pr-X-X-X}.
Since the bottom left horizontal arrow is an epimorphism, the right square commutes as well.
Notice that $j$ is an isomorphism.
Hence there is a unique possible value for \(\text`\hat{f}(I)\pr_n\text'=j^{-1}[\tens^{i\in I}\text`\hat{f}(1)\pr_{n_i}\text']\lambda\) and for $\hat{f}(I)$ itself.
Clearly, $\hat{f}(I)$ is reconstructed from $f$ via \eqref{eq-XT(I)-f(I)-YT(I)-pr-Y}.
\QED\end{proof}

Equation~\eqref{eq-f(1)prI} can be also presented in the form
\begin{align}
\hat{f}(1)\pr_I &=\Bigl\langle X\hat{T} \rEpi \prod_{m\in[KI]}X^{\tens m} \rTTo^{\Delta^{\restr}} \bigl(\prod_{k\in[K]}X^{\tens k}\bigr)^{\tens I} \rTTo^{\text`f\text'^{\tens I}} Y^{\tens I} \Bigr\rangle,
\label{eq-f(1)prI-XX-Delta-rest-XY}
\\
\Delta^{\restr}\big|_{X^{\tens m}} &=\hspace*{-0.5em}\sum_{k_1+\dots+k_I=m}^{k_i\le K} \biggl\langle X^{\tens m} \rto[\sim]{\lambda^{-1}} \bigotimes_{i\in I} X^{\tens k_i} \rTTo^{\tens^{i\in I}\inj_{k_i}\;} \bigl(\prod_{k\in[K]}X^{\tens k}\bigr)^{\tens I} \biggr\rangle.
\label{eq-Delta-restr}
\end{align}
$\Delta^{\restr}$ is the restricted cut comultiplication.

Suppose that \(\hat{f}:X\tilde{T}\to Y\tilde{T}\) and \(\hat{g}:Y\tilde{T}\to Z\tilde{T}\) of $\hCoalg$ are obtained via \eqref{eq-XT(I)-f(I)-YT(I)-pr-Y} from \(f:XT\to Y\) and $g:YT\to Z$ supported on $[K]$ and $[L]$, respectively.
The top maps in the diagram for $n\in\NN^I$
\begin{diagram}
X\tilde{T}(I) &\rTTo^{\hat{f}(I)} &Y\tilde{T}(I) &\rTTo^{\hat{g}(I)} &Z\tilde{T}(I)
\\
\dEpi &&\dEpi &&\dTTo>{\pr_n}
\\
\prod_{m\le KLn}X^{\tens\|m\|} &\rTTo^{\exists!} &\prod_{l\le Ln}X^{\tens\|l\|} &\rTTo^{\exists!} &Z^{\tens\|n\|}
\end{diagram}
factorize through bottom arrows.
Therefore the composition \(\hat{f}(I)\hat{g}(I)\pr_n\) is supported on \([KLn]\subset\NN^I\).
By \propref{pro-f-supported-on[Kn]} the morphism \(\hat{h}=\hat{f}\hat{g}:X\tilde{T}\to Z\tilde{T}\) can be obtained from a unique morphism $h:XT\to Z$ supported on $[KL]$.
Namely,
\begin{align*}
h &= \bigl(XT \rTTo^{e_X(1)} X\hat{T} \rTTo^{\hat{f}(1)} Y\hat{T} \rTTo^{\hat{g}(1)} Z\hat{T} \rTTo^{\pr_1} Z\bigr)
\\
&= \sum_{k_1,\dots,k_l\le K}^{l\le L} \Bigl(XT \rTTo^{\pr_{k_1+\dots+k_l}} X^{\tens(k_1+\dots+k_l)} \rTTo^{f_{k_1}\tdt f_{k_l}} Y^{\tens l} \rTTo^{g_l} Z\Bigr),
\end{align*}
using \eqref{eq-f(1)prI-XX-Delta-rest-XY}.
Its components are
\begin{equation}
h_m =\sum_{k_1+\dots+k_l=m}^{k_i\le K,\;l\le L} \Bigl(X^{\tens m} \rTTo^{\lambda^{-1}}_\sim X^{\tens k_1} \tdt X^{\tens k_l} \rTTo^{f_{k_1}\tdt f_{k_l}} Y^{\tens l} \rTTo^{g_l} Z\Bigr).
\label{eq-asdfghjkl}
\end{equation}

Clearly \(\id:X\tilde{T}\to X\tilde{T}\) is of the form $\hat{f}$ for \(f=\pr_1:XT\to X\) supported on $\{1\}\subset[1]$.
Thus, there is a subcategory $\fhCoalg\subset\hCoalg$ (of homotopy coalgebras cofree with respect to ordinary coalgebras), whose objects are $X\tilde{T}$, $X\in\cv$, and morphisms \(\hat{f}:X\tilde{T}\to Y\tilde{T}\) come from \(f:XT\to Y\) with finite support as in \eqref{eq-XT(I)-f(I)-YT(I)-pr-Y}.

\subsection{Coderivations for homotopy coalgebras}
We define coderivations of homotopy coalgebras, describe coderivations from an ordinary coalgebra to $X\tilde{T}$, $X\in\Ob\cv$, give a supply of coderivations between $X\tilde{T}$ and $Y\tilde{T}$.

A \emph{natural transformation \(\xi:C\to G:\cc\to\overline\cv\) of degree $k$} is defined as a family \((\xi(I)\in\und\cv(C(I),G(I))^k)_{I\in\Ob\cc}\) satisfying relation \(\xi(J)G(\psi)=C(\psi)\xi(I)\) for all \(\psi:J\to I\in\cc\) (a commutative square in $\overline\cv$).
Bijection~\eqref{eq-V(XZ)k-V(XZk)} allows to rewrite such a natural transformation as an ordinary natural transformation \(\xi:C\to G[k]:\cc\to\cv\).
Nevertheless we shall use the first (degree $k$) version.

\begin{definition}\label{def-(fg)-coderivation}
Let \(f,g:(C,\chi)\to(G,\gamma)\) be morphisms of homotopy coalgebras.
An \emph{$(f,g)$-coderivation} \(\xi:(C,\chi)\to(G,\gamma)\) is a natural transformation \(\xi:C\to G:\co_\sk^\op\to\cv\) of certain degree such that
\begin{diagram}[width=5em,LaTeXeqno]
\tens^{i\in I}C(n_i) &\rTTo^{\chi^I} &C(n_1+\dots+n_I)
\\
\dTTo<{\sum_{u=1}^I\tens^I(\sS{^{u-1}}f,\xi,\sS{^{I-u}}g)} &= &\dTTo>\xi
\\
\tens^{i\in I}G(n_i) &\rTTo^{\gamma^I} &G(n_1+\dots+n_I)
\label{dia-CCGG-(fg)-coderivation}
\end{diagram}
\end{definition}

Here rigorous \(\tens^I(\sS{^{u-1}}f,\xi,\sS{^{I-u}}g)\) should be informally interpreted as \(f^{\tens(u-1)}\tens\xi\tens g^{\tens(I-u)}\).
When $g=f$, $(f,f)$-coderivations are called $f$\n-coderivations.
When \(f=g=\id\), $(\id,\id)$-coderivations are simply called coderivations.

\begin{example}
Let $\cv=\gr_\kk$.
For any $a\in\ZZ$ consider the coaugmented algebra \(\DD=\kk\oplus\kk[a]\) in $\cv$ with the unit \(\inj_1:\kk\to\DD\), the multiplication \(\kk[a]\cdot\kk[a]=0\) and the coaugmentation \(\pr_1:\DD\to\kk\).
Let $\cv_\DD$ be the category of commutative $\DD$\n-bimodules in $\gr_\kk$, which is the same as $\gr_\DD$.
There are monoidal functors $\cv\to\cv_\DD$, \(X\mapsto X_\DD=X\tens_\kk\DD\), and $\cv_\DD\to\cv$, \(W\mapsto W\tens_\DD\kk\).

Let \(f:(C,\chi)\to(G,\gamma)\) be a morphism of homotopy coalgebras in $\gr_\kk$.
We claim that the relevant infinitesimal morphisms \(\tilde{f}:C_\DD\to G_\DD:\co_\sk^\op\to\cv_\DD\in\hCoalg\) (such that \(\tilde{f}\tens_\DD\kk=f\)) are in bijection with $(f,f)$\n-coderivations of degree $a$.
In fact, a $\DD$\n-linear map $\tilde{f}(I)$ is determined by
\begin{multline*}
\bigl\langle C(I) \rTTo^{1\tens\inj_1} C(I)\tens_\kk\DD \rTTo^{\tilde{f}(I)} G(I)\tens_\kk\DD \bigr\rangle 
\ifx\chooseClass1
\\
\fi
=\bigl\langle (f,\xi\tens\sigma^a): C(I) \to G(I)\tens_\kk\kk \oplus G(I)\tens_\kk\kk[a] \bigr\rangle,
\end{multline*}
where \(\xi(I):C(I)\to G(I)\) is a $\kk$\n-linear map of degree $a$.
It's easy to see that naturality of $\tilde{f}$ is equivalent to naturality of $\xi$.
Monoidality of $\tilde{f}$, see \eqref{eq-dia-Monoidal-trans}(b), can be written as
\begin{multline}
\bigl\langle \tens^{i\in I}C(n_i) \rto{\chi^I} C(n_1+\dots+n_I) \rTTo^{\xi\tens\sigma^a} G(n_1+\dots+n_I)\tens\kk[a] \bigr\rangle
\\
=\sum_{u\in I} \bigl\langle \tens^{i\in I}C(n_i) \rTTo^{\tens^I(\sS{^{u-1}}f,\xi\tens\sigma^a,\sS{^{I-u}}f)} \tens^I\bigl((G(n_i))_{i=1}^{u-1},G(n_u)\tens\kk[a],(G(n_i))_{i=u+1}^I\bigr) \hfill
\\
\rTTo^{1^{\tens(u-1)}\tens\sigma^{-a}\tens1^{\tens(I-u)}\hspace*{-.3em}} \tens^{i\in I}G(n_i) \rto{\sigma^a} (\tens^{i\in I}G(n_i))\tens\kk[a] \rto{\hspace*{-.3em}\gamma\tens1\hspace*{-.3em}} G(n_1+\dots+n_I)\tens\kk[a] \bigr\rangle
\label{eq-chi-sum-sigma}
\end{multline}
since here
\[ (1^{\tens(u-1)}\tens\sigma^{-a}\tens1^{\tens(I-u)})\sigma^a =(I+1,I,\dots,u+2,u+1)_c
\]
is the signed permutation.
In fact, for an arbitrary $X\in\Ob\cv$
\[ \bigl( \kk[a]\tens X \rTTo^{\sigma^{-a}\tens1} X \rto{\sigma^a} X\tens\kk[a] \bigr) =c.
\]
Obtained equation \eqref{eq-chi-sum-sigma} is equivalent to \eqref{dia-CCGG-(fg)-coderivation}.
\end{example}

\begin{proposition}
Let \(f,g:C\to G\) be morphisms of ordinary coalgebras in $\cv$.
Then $(f,g)$-coderivations in the sense of \defref{def-(fg)-coderivation} are the same as ordinary $(f,g)$-coderivations, morphisms \(\xi:C\to G\in\und\cv^\bull\) such that
\begin{equation}
\xi\Delta =\Delta(f\tens\xi+\xi\tens g): C \to G\tens G.
\label{eq-xi-Delta-Delta(foxi-xiog)}
\end{equation}
\end{proposition}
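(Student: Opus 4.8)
The plan is to realise the asserted coincidence as a bijection $\xi\mapsto\check\xi=\xi(1)$ between \defref{def-(fg)-coderivation} coderivations and classical ones, with inverse the Leibniz extension $\check\xi\mapsto(\xi(I))$. First I would specialise the ambient structure: since $C$ and $G$ are ordinary coalgebras, \propref{pro-coalgebra-lax-Monoidal-functor} and \propref{pro-lax-coalgebra-ordinary} give $C(I)=C^{\tens I}$, $G(I)=G^{\tens I}$ and $\chi^I_{\mb1,\dots,\mb1}=\gamma^I_{\mb1,\dots,\mb1}=\lambda^{\id}=\id$ (working in the strict Monoidal picture the conventions allow). Given a \defref{def-(fg)-coderivation} coderivation $\xi$, put $\check\xi=\xi(1)$. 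Evaluating the monoidality square \eqref{dia-CCGG-(fg)-coderivation} at $n_1=\dots=n_I=1$, where both horizontal arrows are identities, collapses it to
\[ \xi(I)=\sum_{u=1}^I f^{\tens(u-1)}\tens\check\xi\tens g^{\tens(I-u)}: C^{\tens I}\to G^{\tens I}, \]
so the whole family $\xi$ is reconstructed from $\check\xi$; in particular $\xi(2)=f\tens\check\xi+\check\xi\tens g$.

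Next I would extract the defining equation. The unique contraction $\con:\mb2\to\mb1$ satisfies $C(\con^\op)=G(\con^\op)=\Delta$, so naturality of $\xi$ at $\con^\op$ reads $\check\xi\,\Delta=\Delta\,\xi(2)$; substituting the value of $\xi(2)$ just found yields precisely the classical coderivation identity \eqref{eq-xi-Delta-Delta(foxi-xiog)}. Together with the reconstruction formula this shows that $\xi\mapsto\check\xi$ is well defined and injective.

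For surjectivity I would take an arbitrary classical coderivation $\check\xi$, \emph{define} $\xi(I)$ by the Leibniz formula above, and check that it is a \defref{def-(fg)-coderivation} coderivation. The key intermediate step is the higher coderivation identity $\check\xi\,\Delta_n=\Delta_n\sum_{w=1}^n f^{\tens(w-1)}\tens\check\xi\tens g^{\tens(n-w)}$, which I would prove by induction on $n$ from the case $n=2$ (equation \eqref{eq-xi-Delta-Delta(foxi-xiog)}) using coassociativity; combined with the fact that the coalgebra morphisms $f,g$ commute with the iterated comultiplications, naturality at a general $\phi:I\to J$ follows by matching, for each position of $\check\xi$ inside the block $\phi^{-1}j$, the corresponding summand on the two sides of $\xi(J)\,\Delta_G^\phi=\Delta_C^\phi\,\xi(I)$. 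Monoidality for general $(n_i)$ is the compatibility of the Leibniz sum with the regrouping $\lambda^{\sqcup_{i\in I}n_i\to I}$ and is checked in the same way. These two constructions are mutually inverse by the reconstruction formula, giving the claimed identification.

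The main obstacle I expect is the combinatorial and sign bookkeeping in the surjectivity step: matching the sum over the position of $\check\xi$ in $\xi(J)\,\Delta_G^\phi$ with the sum in $\Delta_C^\phi\,\xi(I)$ requires tracking the Koszul signs produced when the degree-$k$ map $\check\xi$ is commuted past copies of $f$ and $g$ and through the symmetries hidden in $\lambda^\phi$ in the non-strict case. The inductive proof of the higher coderivation identity is the conceptual heart, but the genuinely delicate part is keeping these signs consistent across both the naturality and the monoidality verifications.
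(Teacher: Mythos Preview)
Your proposal is correct and follows essentially the same route as the paper's proof: in both directions the argument hinges on evaluating the monoidality square \eqref{dia-CCGG-(fg)-coderivation} at $n_1=\dots=n_I=1$ (where $\chi^I=\gamma^I=\id$) to force $\xi(I)=\sum_{u}f^{\tens(u-1)}\tens\xi(1)\tens g^{\tens(I-u)}$, then using naturality at $\con:\mb2\to\mb1$ for the classical identity, and finally checking naturality for general $\phi$ via the inductive identity $\xi\Delta_I=\Delta_I\xi(I)$ together with the decomposition of $\Delta^\phi$ through the $\Delta_{\phi^{-1}j}$. Your worry about Koszul signs is overcautious: the $\gr$\n-enriched conventions of $\overline\cv$ absorb them, and the paper's proof accordingly does not track any signs explicitly.
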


\begin{proof}
Assume that $\xi(I)$, $I\in\NN$, satisfy conditions of \defref{def-(fg)-coderivation}.
Then for \(\phi=\con:\mb2\to\mb1\) the both squares of
\begin{diagram}
C &\rTTo^\Delta &C(2) &\lId^{\chi^2} &C\tens C
\\
\dTTo<\xi &&\dTTo>{\xi(2)} &&\dTTo>{f\tens\xi+\xi\tens g}
\\
G &\rTTo^\Delta &G(2) &\lId^{\chi^2} &G\tens G
\end{diagram}
commute.

Assume now that \eqref{eq-xi-Delta-Delta(foxi-xiog)} holds.
Using \eqref{dia-CCGG-(fg)-coderivation} for \(n_1=\dots=n_I=1\) we find that
\[ \xi(I) =\sum_{u=1}^I \tens^I(\sS{^{u-1}}f,\xi,\sS{^{I-u}}g): C^{\tens I} \to G^{\tens I}
\]
for $I\in\NN$.
These expressions satisfy \eqref{dia-CCGG-(fg)-coderivation} for arbitrary \(n_i\in\NN\), $i\in I$.
Equation~\eqref{eq-xi-Delta-Delta(foxi-xiog)} implies by induction that for all $I\in\NN$
\[ \xi\Delta_I =\Delta_I\xi(I): C \to G^{\tens I}.
\]
Hence, $\xi$ is natural with respect to maps \(\phi=\con:I\to\mb1\in\co_\sk\).
This implies naturality, \(\xi(J)\Delta^\phi_G=\Delta^\phi_C\xi(I)\), for an arbitrary \(\phi:I\to J\in\co_\sk\) due to presentation of $\Delta^\phi$ via $\Delta_{\phi^{-1}j}$.
\QED\end{proof}

\begin{proposition}\label{pro-(fg)coder(CXT)-V(CX)}
Let $C$ be an ordinary coalgebra in $\cv$, let $X\in\Ob\cv$ and let \(f,g:C\to X\tilde{T}\in\hCoalg\).
Then the map of graded abelian groups
\[ (f,g)\coder(C,X\tilde{T}) \to \overline\cv(C,X), \qquad \xi \mapsto \check\xi =(C \rTTo^{\xi(1)} X\hat{T} \rTTo^{\pr_1} X),
\]
is bijective.
\end{proposition}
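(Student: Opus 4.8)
The plan is to follow the proof of \propref{pro-lCoalg(CXT)-V(CX)} line by line, replacing every tensor power $(-)^{\tens J}$ by the $(f,g)$-Leibniz sum $\sum_{u=1}^J\tens^J(\sS{^{u-1}}f,-,\sS{^{J-u}}g)$ that governs coderivations. Fix the morphisms $f,g$ and write $\check f=(C\rTTo^{f(1)}X\hat T\rTTo^{\pr_1}X)$ and $\check g$ analogously; by \propref{pro-lCoalg(CXT)-V(CX)} these recover $f,g$, so the only genuine datum is $\check\xi$.

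First I would prove injectivity. Naturality of $\xi$ for $\con\colon I\to\mb1$ gives $\xi(1)\,X\tilde T(\con^\op)=\Delta_I\,\xi(I)$; postcomposing with $\pr_n$ and using $X\tilde T(\con^\op)\pr_n=\pr_{\|n\|}$ yields
\[ \xi(1)\pr_{\|n\|}=\Delta_I\cdot\xi(I)\pr_n,\qquad n\in\NN^I. \]
Specializing \defref{def-(fg)-coderivation} to $n_1=\dots=n_I=1$, where the source structure map $C^{\tens I}\to C(I)$ is the identity because $C$ is ordinary, gives the coderivation analogue of \eqref{eq-t(I)-CCTXTXT(I)},
\[ \xi(I)=\Bigl(\sum_{u=1}^I f(1)^{\tens(u-1)}\tens\xi(1)\tens g(1)^{\tens(I-u)}\Bigr)\chi^I. \]
Choosing $n=(1,\dots,1)$ in the first display and using $\chi^I\pr_{(1,\dots,1)}=\pr_1^{\tens I}$ produces the analogue of \eqref{eq-CXTX-CC(I)CX},
\[ \xi(1)\pr_I=\Delta_I\cdot\sum_{u=1}^I \check f^{\tens(u-1)}\tens\check\xi\tens\check g^{\tens(I-u)}\colon C\to X^{\tens I}. \]
Thus $\check\xi$ determines each component $\xi(1)\pr_I$, hence $\xi(1)$ (in the manner of \eqref{eq-t(1)-C-XT}, namely $\hat\Delta$ followed by the map whose $I$th component is this Leibniz sum), and then the formula for $\xi(I)$ determines the whole family. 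This gives injectivity.

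For surjectivity I would, conversely, take an arbitrary $\check\xi\in\overline\cv(C,X)$ of degree $k$, define $\xi(1)$ by the formula for $\xi(1)\pr_I$ above and $\xi(I)$ by the Leibniz formula above, and verify that the family is an $(f,g)$-coderivation. Exactly two checks remain, paralleling the two diagrams in the proof of \propref{pro-lCoalg(CXT)-V(CX)}: naturality of $\xi$ for an arbitrary $\phi\colon I\to J\in\co_\sk$, and the square of \defref{def-(fg)-coderivation} for arbitrary $n_i$. Both are settled by postcomposing with a projection $\pr_n$ and decomposing into commutative cells assembled from $\hat\Delta$, the comultiplications $\Delta_{n_i}$, the coherence isomorphisms $\lambda^\phi$, and the maps $\check f,\check\xi,\check g$, just as in the morphism case.

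The hard part will be the bookkeeping in the second check. In the morphism proof every tensor slot carried the same $\check t$, so the coherence isomorphisms $\lambda^\phi$ acted on indistinguishable factors; here I must track the single slot occupied by $\xi$ among those occupied by $f$ or $g$ as I pass between the coarse decomposition indexed by $I$ and the finer one indexed by $\sqcup_{i\in I}n_i$. Concretely, the summand of the target-side Leibniz sum in which $\xi$ lands in the $u$th block must be matched with the corresponding source-side summand, with the Koszul signs already packaged inside $\tens^I(\sS{^{u-1}}f,\xi,\sS{^{I-u}}g)$. Once this index matching is fixed, every cell commutes for the same reasons as in \propref{pro-lCoalg(CXT)-V(CX)}, so the constructed $\xi$ is an $(f,g)$-coderivation with the prescribed $\check\xi$, establishing surjectivity.
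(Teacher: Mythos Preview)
Your proposal is correct and follows essentially the same approach as the paper: the injectivity argument via the Leibniz formula for $\xi(I)$ and the formula $\xi(1)\pr_I=\Delta_I\sum_u\check f^{\tens(u-1)}\tens\check\xi\tens\check g^{\tens(I-u)}$ matches the paper exactly, and the surjectivity check proceeds by the same two diagram verifications. The paper organizes the naturality check by writing the hexagon as a sum over the position $y\in J$ where $\xi$ sits, each summand being a tensor product over $j\in J$ of commuting hexagons treated according to whether $j<y$, $j=y$, or $j>y$---this is precisely the index-matching bookkeeping you anticipate as the hard part.
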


\begin{proof}
Let us show that this map is injective.
First we notice that $\xi(1)$ is determined by $\check\xi$.
For an arbitrary \(I\in\NN\) consider the commutative diagram
\begin{diagram}[h=2.2em,LaTeXeqno]
C(1) &\rTTo^{\xi(1)} &X\tilde{T}(1)
\\
\dTTo>{C((\con:I\to\mb1)^\op)}~=<{\Delta_I} &&\dTTo<{X\tilde{T}(\con^\op)} &\rdTTo^{\pr_I} &
\\
C(I) &\rTTo^{\xi(I)} &X\tilde{T}(I) &\rTTo^{\pr_{(1,\dots,1)}} &X^{\tens I}
\\
\uId<{\chi^I=\lambda^{\id}=\id} &&\uTTo<{\chi^I} &\ruTTo_{\pr_1^{\tens I}} &
\\
C(1)^{\tens I} &\rTTo^{\hspace*{-2em}\sum_{i=1}^If(1)^{\tens(i-1)}\tens\xi(1)\tens g(1)^{\tens(I-i)}} &X\tilde{T}(1)^{\tens I}
\label{dia-C-xi-XT(1)}
\end{diagram}
It follows that
\begin{equation}
\bigl( C \rTTo^{\xi(1)} X\hat{T} \rTTo^{\pr_I} X^{\tens I} \bigr) =\bigl( C \rTTo^{\Delta_I} C^{\tens I} \rTTo^{\sum_{i=1}^I\check f^{\tens(i-1)}\tens\check\xi\tens\check g^{\tens(I-i)}} X^{\tens I} \bigr).
\label{eq-CXTX-CCX}
\end{equation}
The left bottom square of the diagram gives
\begin{equation*}
\xi(I) =\bigl( C^{\tens I} \rTTo^{\sum_{i=1}^If(1)^{\tens(i-1)}\tens\xi(1)\tens g(1)^{\tens(I-i)}} X\hat{T}^{\tens I} \rto{\chi^I} X\tilde{T}(I) \bigr).
\end{equation*}
Thus, $\xi$ can be recovered from $\check\xi$ in unique way.

Let us show that for an arbitrary \(\check\xi\in\und\cv(C,X)^l\) the above formulae give, indeed, an $(f,g)$-coderivation \(\xi:C\to X\tilde{T}\) of degree $l$.
Commutativity of the exterior of the following diagram for all \(\phi:I\to J\in\co_\sk\) is precisely naturality of $\xi$:
\begin{diagram}[w=2em,nobalance]
C^{\tens J} &\rTTo^{\hspace*{-1.6em}\sum_{y=1}^J\tens^J(\sS{^{y-1}}f(1),\xi(1),\sS{^{J-y}}g(1))\hspace*{-2.6em}} &X\hat{T}^{\tens J} &&\rTTo^{\chi^J} &&X\tilde{T}(J)
\\
&&\dTTo<{\tens^{j\in J}\pr_{\phi_*n)_j}} &= &&\ldTTo<{\pr_{\phi_*n}}
\\
&&\tens^{j\in J}X^{\tens(\phi_*n)_j} &\rTTo^{\hspace*{-1em}\lambda^{\sqcup_{j\in J}(\phi_*n)_j\to J}\hspace*{-1em}} &X^{\tens\|n\|} &=
\\
\dTTo~{\tens^{j\in J}\Delta_{\phi^{-1}j}} &&\uTTo<{\hspace*{-2em}\tens^{j\in J}\lambda^{\sqcup_{i\in\phi^{-1}j}n_i\to\phi^{-1}j}} &= &&\luTTo(2,6)>{\pr_n} &\dTTo~{X\tilde{T}(\phi^\op)}
\\
&&\tens^{j\in J}\tens^{i\in\phi^{-1}j}X^{\tens n_i} &&\uTTo<{\lambda^{\sqcup_{i\in I}n_i\to I}} &&
\\
&&\uTTo<{\hspace*{-1em}\tens^{j\in J}\tens^{i\in\phi^{-1}j}\pr_{n_i}} &\rdTTo^{\lambda^\phi}
\\
\tens^{j\in J}C^{\tens\phi^{-1}j} &\rTTo^P &\tens^{j\in J}X\hat{T}^{\tens\phi^{-1}j} &= &\tens^{i\in I}X^{\tens n_i} 
\\
\dTTo<{\lambda^\phi} &= &\dTTo<{\lambda^\phi} &\ruTTo>{\tens^{i\in I}\pr_{n_i}} &=
\\
C^{\tens I} &\rTTo^{\hspace*{-1.6em}\sum_{i=1}^I\tens^I(\sS{^{u-1}}f(1),\xi(1),\sS{^{I-u}}g(1))\hspace*{-2.6em}} &X\hat{T}^{\tens I} &&\rTTo^{\chi^I} &&X\tilde{T}(I)
\end{diagram}
Here
\begin{multline*}
P =\sum_{y=1}^J \tens^J\Bigl( f(1)^{\tens\phi^{-1}1},\dots,f(1)^{\tens\phi^{-1}(y-1)},
\\
\sum_{u\in\phi^{-1}y}\tens^{\phi^{-1}y}\bigl(\sS{^{u-1}}f(1),\xi(1),\sS{^{\phi^{-1}y-u}}g(1)\bigr), g(1)^{\tens\phi^{-1}(y+1)},\dots,g(1)^{\tens\phi^{-1}J} \Bigr).
\end{multline*}
All small cells of this diagram obviously commute.
The hexagon is a sum over $y\in J$ of commuting hexagons, each of which is a tensor product over $j\in J$ of commuting hexagons presented below.
Here $K$ denotes $\phi^{-1}j$ and \(m=(n_i)_{i\in\phi^{-1}j}\in\NN^K\).
If $j<y$, then
\begin{diagram}[w=4em]
C &&&\rTTo_{f(1)} &&&X\hat{T}
\\
&\rdTTo(4,2)^{\Delta_{\|m\|}}
\\
\dTTo<{\Delta_K} &&\tens^{i\in K}C^{\tens n_i} &\rTTo_{\lambda^{\sqcup_{i\in K}n_i\to K}} &C^{\tens\|m\|} &&\dTTo>{\pr_{\|m\|}}
\\
&\ruTTo^{\tens^{i\in K}\Delta_{n_i}} &&\rdTTo_{\tens^{i\in K}\check f^{\tens n_i}} &&\rdTTo^{\check f^{\tens\|m\|}}
\\
C^{\tens K} &\rTTo_{f(1)^{\tens K}} &X\hat{T}^{\tens K} &\rTTo_{\tens^{i\in K}\pr_{n_i}} &\tens^{i\in K}X^{\tens n_i} &\rTTo_{\lambda^{\sqcup_{i\in K}n_i\to K}} &X^{\tens\|m\|}
\end{diagram}
commutes.
If $j>y$, then similar hexagon occurs with $g$ in place of $f$.
If $j=y$, then the commutative hexagon is
\begin{diagram}[w=2em]
C &&&\rTTo_{\xi(1)} &&&X\hat{T}
\\
&\rdTTo^{\Delta_{\|m\|}} &&&&=
\\
&&C^{\tens\|m\|} &&&&\dTTo<{\pr_{\|m\|}}
\\
\dTTo>{\Delta_K} &&\uTTo<{\lambda^{\sqcup_{i\in K}n_i\to K}} &\rdTTo(4,4)>{\sum_{l=1}^{\|m\|}\tens^I(\sS{^{l-1}}{\check f},\check\xi,\sS{^{\|m\|-l}}{\check g})}
\\
&= &\tens^{i\in K}C^{\tens n_i}
\\
&\ruTTo^{\tens^{i\in K}\Delta_{n_i}} &&\rdTTo_Q &= &
\\
C^{\tens K} &\rTTo_{\sum_{u\in K}\tens^I(\sS{^{u-1}}f(1),\xi(1),\sS{^{K-u}}g(1))\hspace*{-1em}} &X\hat{T}^{\tens K} &\rTTo_{\hspace*{-.8em}\tens^{i\in K}\pr_{n_i}\hspace*{-.8em}} &\tens^{i\in K}X^{\tens n_i} &\rTTo_{\lambda^{\sqcup_{i\in K}n_i\to K}} &X^{\tens\|m\|}
\end{diagram}
where
\[ Q =\sum_{u\in K} \tens^K\Bigl( \check f^{\tens n_1},\dots,\check f^{\tens n_{u-1}},\sum_{l=1}^{n_u}\tens^{n_u}\bigl(\sS{^{l-1}}{\check f},\check\xi,\sS{^{n_u-l}}{\check g}\bigr),\check g^{\tens n_{u+1}},\dots,\check g^{\tens K} \Bigr).
\]
Here three quadrilaterals obviously commute.
The remaining one is a sum over $u\in K$ of tensor products over $i\in K$ of commuting quadrilaterals, see equations \eqref{eq-CXTX-CC(I)CX} and \eqref{eq-CXTX-CCX}.
Thus $\xi$ is natural.

Let us prove property~\eqref{dia-CCGG-(fg)-coderivation}.
It takes the form
\begin{diagram}[nobalance]
\tens^{i\in I}C^{\tens n_i} &\rTTo^R &\tens^{i\in I}X\hat{T}^{\tens n_i} &\rTTo^{\tens^{i\in I}\chi^{n_i}} &\tens^{i\in I}X\tilde{T}(n_i)
\\
\dTTo<{\lambda^{\sqcup_{i\in I}n_i\to I}} &&\dTTo<{\lambda^{\sqcup_{i\in I}n_i\to I}} &&\dTTo>{\chi^I}
\\
C^{\tens\|n\|} &\rTTo^{\sum_{k=1}^{\|n\|}\tens^{\|n\|}(\sS{^{k-1}}f(1),\xi(1),\sS{^{\|n\|-k}}g(1))} &X\hat{T}^{\tens\|n\|} &\rTTo^{\chi^{\|n\|}} &X\tilde{T}(\|n\|)
\end{diagram}
where
\begin{equation*}
R =\sum_{u\in I} \tens^I\Bigl( \bigl(f(1)^{\tens n_i}\bigr)_{i=1}^{u-1},\sum_{k=1}^{n_u}\tens^{n_u}\bigl(\sS{^{k-1}}f(1),\xi(1),\sS{^{n_u-k}}g(1)\bigr), \bigl(g(1)^{\tens n_i}\bigr)_{i=u+1}^I \Bigr).
\end{equation*}
Commutativity of both squares is obvious.
Thus, $\xi$ is an $(f,g)$-coderivation.
\QED\end{proof}

\begin{proposition}\label{pro-fgxi-XT-Y}
Let morphisms \(\check f=(f_k)\), \(\check g=(g_k):XT\to Y\in\cv\), \(\check\xi=(\xi_k):XT\to Y\in\bar\cv^a\) have finite support, \(\deg\check f=\deg\check g=0\), \(\deg\check\xi=a\).
Namely, there exists \(K\in\NN\) such that $f_k=g_k=\xi_k=0$ for all $k>K$.
Associate morphisms of homotopy coalgebras \(f,g:X\tilde{T}\to Y\tilde{T}\) with $\check f$, $\check g$ as in \propref{pro-f-XT-Y-defines-f}.
For each $I\in\co_\sk$ define degree $a$ morphisms \(\xi(I):X\tilde{T}(I)\to Y\tilde{T}(I)\) by the expressions, $n\in\NN^I$,
\begin{multline}
\bigl\langle X\tilde{T}(I) \rTTo^{\xi(I)} Y\tilde{T}(I) \rto{\pr_n} Y^{\tens\|n\|} \bigr\rangle =\sum_{k\in[K]^{\|n\|}} \bigl\langle X\tilde{T}(I) \rTTo^{\pr_{(\sqcup_{i\in I}n_i\to I)_*k}} X^{\tens\|k\|}
\\
\rTTo^{(\lambda^{\sqcup_{p\in\|n\|}k_p\to\|n\|})^{-1}} \tens^{p\in\|n\|}X^{\tens k_p} \rTTo^{\sum_{u=1}^{\|n\|}\tens^{\|n\|}((f_{k_p})_{p=1}^{u-1},\xi_{k_u},(g_{k_p})_{p=u+1}^{\|n\|})} Y^{\tens\|n\|} \bigr\rangle.
\label{eq-XT(I)-xi(I)-YT(I)-pr-Y}
\end{multline}
Then $\xi$ is an $(f,g)$-coderivation.
\end{proposition}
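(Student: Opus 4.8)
The plan is to verify the two defining conditions of an $(f,g)$-coderivation from \defref{def-(fg)-coderivation}: that $\xi$ is a natural transformation $X\tilde{T}\to Y\tilde{T}:\co_\sk^\op\to\cv$ of degree $a$, and that the Leibniz-type square~\eqref{dia-CCGG-(fg)-coderivation} commutes. Since $\check\xi$ has degree $a$ while $\check f$ and $\check g$ have degree $0$, and each summand of~\eqref{eq-XT(I)-xi(I)-YT(I)-pr-Y} inserts exactly one factor $\xi_{k_u}$ among the $f$'s and $g$'s, every $\xi(I)$ is homogeneous of degree $a$ and the degree requirement is automatic. Throughout I would follow the diagrammatic strategy of \propref{pro-f-XT-Y-defines-f}, postcomposing each claimed identity with a projection $\pr_n$ (resp.\ $\pr_m$) so as to reduce it to an identity of morphisms landing in a single tensor power $X^{\tens\|n\|}$ (resp.\ $Y^{\tens\|m\|}$), where every map is expressed through the structure morphisms $\lambda^\phi$ and the components $f_k$, $g_k$, $\xi_k$.

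For naturality, fix $\phi:I\to J\in\co_\sk$ and establish $X\tilde{T}(\phi^\op)\xi(I)=\xi(J)Y\tilde{T}(\phi^\op)$. Postcomposing with $\pr_n$, $n\in\NN^I$, and unfolding both sides via~\eqref{eq-XT(I)-xi(I)-YT(I)-pr-Y} and~\eqref{eq-XT(fop)prn}, the computation reproduces the naturality part of \propref{pro-f-XT-Y-defines-f} verbatim, except that in each of the two $\|n\|$-fold tensor products one must keep track of the single slot occupied by $\xi$. The decisive bookkeeping identity is again $\phi_*(\sqcup_{i\in I}n_i\to I)_*=(\sqcup_{j\in J}(\phi_*n)_j\to J)_*$, which matches the two summation index sets, and the Leibniz sums then correspond term by term because both place $\xi$ in the same position of the ambient tensor $X^{\tens\|k\|}$.

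For the coderivation square~\eqref{dia-CCGG-(fg)-coderivation} I would postcompose with $\pr_m$, $m=(m_n)_{n\in N}\in\NN^N$, where $N=\sqcup_{i\in I}N_i$ and $m=m^1\oplus\dots\oplus m^I$, $m^i\in\NN^{N_i}$, exactly as in the monoidality part of \propref{pro-f-XT-Y-defines-f}. The new feature is that the outer Leibniz sum $\sum_{u\in I}\tens^I(\sS{^{u-1}}f,\xi,\sS{^{I-u}}g)$ selects one outer factor $X\tilde{T}(N_u)$ into which $\xi(N_u)$ is inserted, and $\xi(N_u)$ itself carries an internal Leibniz sum over the positions of its $\|m^u\|$-expansion according to~\eqref{eq-XT(I)-xi(I)-YT(I)-pr-Y}. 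The heart of the argument is to identify this nested sum with the single internal Leibniz sum appearing in $\xi(N)\pr_m$ on the other side of the square. This is precisely the coherence of the Leibniz rule under the concatenation $\NN^N\simeq\NN^{N_1}\oplus\dots\oplus\NN^{N_I}$: choosing an outer slot $u$ and then a position within its block amounts to choosing a single global position in the $\|m\|$-fold tensor, the two index sets corresponding via the same identification of $(\sqcup_{n\in N}m_n\to N)_*k$ with the blocks $(\sqcup_{n_i\in N_i}m^i_{n_i}\to N_i)_*k^i$ used at the end of the proof of \propref{pro-f-XT-Y-defines-f}. Once these sums are aligned term by term, the remaining cells are the commuting $\lambda$-coherence (tetrahedron) cells already treated in \exaref{exa-lax-comonoid-tilde-T} and \propref{pro-f-XT-Y-defines-f}.

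The step I expect to be the main obstacle is exactly this term-by-term identification of the nested Leibniz sum with the global one, together with the Koszul signs incurred when the position of the single degree-$a$ factor $\xi$ is transported through the permutations implicit in the maps $\lambda^\phi$ during the reindexing. Because the companions $f$ and $g$ are of degree $0$, the tensor-of-operators rule contributes no additional signs, so the sign analysis reduces to tracking how the lone $\xi$ crosses the $\lambda$-permutations; I expect consistent cancellation, but confirming it in each matched pair of summands is where the care lies. With the signs settled, every cell of the resulting diagram commutes and $\xi$ is an $(f,g)$-coderivation.
\QED
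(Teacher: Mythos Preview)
Your proposal is correct and follows essentially the same approach as the paper: postcompose with the projections $\pr_n$ (for naturality) and $\pr_m$ (for the coderivation square), then verify the resulting diagrams cell by cell, the crux being the identification of the nested Leibniz sum $\sum_{v\in I}\tens^I(\dots)$ with the single global sum $\sum_{u=1}^{\|m\|}\tens^{\|m\|}(\dots)$ via the block decomposition $[K]^{\|m\|}\simeq[K]^{\|m^1\|}\times\dots\times[K]^{\|m^I\|}$.

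One clarification on what you flag as the main obstacle: the Koszul-sign worry about ``how the lone $\xi$ crosses the $\lambda$-permutations'' is unfounded, because the $\lambda^\phi$ involve no permutations at all. For $\phi:I\to J\in\co_\sk$ the map $\phi$ is non-decreasing, so the preimages $\phi^{-1}j$ are consecutive intervals in $I$ and $\lambda^\phi:\tens^{j\in J}\tens^{i\in\phi^{-1}j}X_i\to\tens^{i\in I}X_i$ is a pure reassociation---the linear order of the tensor factors is preserved. Combined with your own observation that the companions $f_{k_p}$, $g_{k_p}$ have degree~$0$, no sign is ever produced when the single degree-$a$ factor is transported through these coherence isomorphisms. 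This is why the paper's proof is silent on signs: each matched pair of summands commutes on the nose.
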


\begin{proof}
For an arbitrary map \(\phi:I\to J\in\co_\sk\) we have to prove naturality of $\xi$, expressed by the exterior of the following diagram.
\begin{diagram}
X\tilde{T}(J) &&&\rTTo^{\xi(J)} &&&Y\tilde{T}(J)
\\
&\rdTTo>{\pr_{(\sqcup_{j\in J}(\phi_*n)_j\to J)_*k}} &&&&\ldTTo<{\pr_{\phi_*n}}
\\
\dTTo>{X\tilde{T}(\phi^\op)} &&\sum_{k\in[K]^{\|n\|}} \Bigl\langle X^{\tens\|k\|} &\rto{(\lambda^{\sqcup_{p\in\|n\|}k_p\to\|n\|})^{-1}} \tens^{p\in\|n\|}X^{\tens k_p} \rto{Q} &Y^{\tens\|n\|} \Bigr\rangle &&\dTTo<{Y\tilde{T}(\phi^\op)}
\\
&\ruTTo>{\pr_{(\sqcup_{i\in I}n_i\to I)_*k}} &&&&\luTTo<{\pr_n}
\\
X\tilde{T}(I) &&&\rTTo^{\xi(I)} &&&Y\tilde{T}(I)
\end{diagram}
After postcomposing with \(\pr_n:Y\tilde{T}(I)\to Y^{\tens\|n\|}\), $n\in\NN^I$, the diagram becomes commutative.
Here
\[ Q =\sum_{u=1}^{\|n\|}\tens^{\|n\|}((f_{k_p})_{p=1}^{u-1},\xi_{k_u},(g_{k_p})_{p=u+1}^{\|n\|}).
\]

Equation~\eqref{dia-CCGG-(fg)-coderivation} for $\xi$ means commutativity of the exterior of the following diagram.
Its postcomposition with \(\pr_m:Y\tilde{T}(N)\to Y^{\tens\|m\|}\), \(m=(m_n)\in\NN^N\), \(N=\sqcup_{i\in I}N_i\), partitioned into commutative cells, involves two sums of braced paths with equal number of summands, indexed by \((k^1,\dots,k^I)\in[K]^{\|m^1\|}\times\dots\times[K]^{\|m^I\|}\) and \(k\in[K]^{\|m\|}\) corresponding to each other, \(m=m^1\oplus\dots\oplus m^I\in\NN^{N_1}\oplus\dots\oplus\NN^{N_I}\):
\begin{diagram}[nobalance,h=2.6em]
\tens^{i\in I}X\tilde{T}(N_i) &&&\rTTo^{\chi^I} &&&X\tilde{T}(N)
\\
&\rdTTo>{\overbrace{\sss\tens^{i\in I}\pr_{(\sqcup_{n_i\in N_i}m^i_{n_i}\to N_i)_*k^i}}} &&&&\ldTTo<{\overbrace{\sss\pr_{(\sqcup_{n\in N}m_n\to N)_*k}}}
\\
&\hspace*{-3em}\sum_{k^1\in[K]^{\|m^1\|}} \hspace*{-.5em}\dots\hspace*{-.5em}\sum_{k^I\in[K]^{\|m^I\|}}\hspace*{-1em} &\tens^{i\in I}X^{\tens\|k^i\|} &\rTTo^{\lambda^{\sqcup_{i\in I}\|k^i\|\to I}\hspace*{-1.2em}} &X^{\tens\|k\|} &\hspace*{-1em}\sum_{k\in[K]^{\|m\|}}\hspace*{-1em}
\\
&&\dTTo<{\hspace*{-3em}\tens^{i\in I}(\lambda^{\sqcup_{p\in\|m^i\|}k^i_p\to\|m^i\|})^{-1}} &&\dTTo>{(\lambda^{\sqcup_{p\in\|m\|}k_p\to\|m\|})^{-1}\hspace*{-3em}}
\\
&&\hspace*{-1.7em}\tens^{i\in I}\tens^{p\in\|m^i\|}X^{\tens k^i_p} &\rTTo^{\lambda^{\sqcup_{i\in I}\|m^i\|\to I}\hspace*{-1.2em}} &\tens^{p\in\|m\|}X^{\tens k_p} &&\dTTo<{\xi(N)}
\\
&&\dTTo<{\tens^{i\in I}P_v^i(k^i)} &&\dTTo~{\hspace*{-3em}\sum_{u=1}^{\|m\|}\tens^{\|m\|}((f_{k_p})_{p=1}^{u-1},\xi_{k_u},(g_{k_p})_{p=u+1}^{\|m\|})\hspace*{-3em}}
\\
\dTTo>{\sum_{v=1}^I\tens^I(\sS{^{v-1}}f,\xi,\sS{^{I-v}}g)\hspace*{-1em}} &&\underbrace{\tens^{i\in I}Y^{\tens\|m^i\|}} &\rTTo^{\lambda^{\sqcup_{i\in I}\|m^i\|\to I}\hspace*{-1.2em}} &\underbrace{Y^{\tens\|m\|}}
\\
&\ruTTo>{\tens^{i\in I}\pr_{m^i}} &&&&\luTTo<{\pr_m}
\\
\tens^{i\in I}Y\tilde{T}(N_i) &&&\rTTo^{\chi^I} &&&Y\tilde{T}(N)
\end{diagram}
Here
\[ P_v^i(k^i)=
\begin{cases}
\tens^{p\in\|m^i\|}f_{k^i_p} &\text{ for } i<v,
\\
\sum_{u=1}^{\|m^v\|} \tens^{\|m^v\|} ((f_{k_p^i})_{p=1}^{u-1},\xi_{k_u^i},(g_{k_p^i})_{p=u+1}^{\|m^v\|}) &\text{ for } i=v,
\\
\tens^{p\in\|m^i\|}g_{k^i_p} &\text{ for } i>v.
\end{cases}
\]
Thus $\xi$ is an $(f,g)$-coderivation.
\QED\end{proof}

\begin{remark}\label{rem-xi(1)-coderivation}
For $I=1$ the morphism $\xi(1)$ constructed in \propref{pro-fgxi-XT-Y} is found from equations 
\begin{multline*}
\xi(1)\pr_n =\sum_{k\in[K]^n} \Bigl\langle X\hat{T} \rTTo^{\pr_{\|k\|}} X^{\tens\|k\|} \rTTo^{(\lambda^{\sqcup_{p\in n}k_p\to n})^{-1}} \tens^{p\in n}X^{\tens k_p} 
\ifx\chooseClass1
\\
\fi
\rTTo^{\sum_{u=1}^n\tens^n((f_{k_p})_{p=1}^{u-1},\xi_{k_u},(g_{k_p})_{p=u+1}^n)} Y^{\tens n} \Bigr\rangle
\end{multline*}
for $n\in\NN$.
In the particular case \(f=g=\id\) we have \(f_k=g_k=\delta_{k,1}\) and
\[ \inj_m\xi(1)\pr_n =\sum_{u=1}^n 1^{\tens(u-1)}\tens\xi_{m+1-n}\tens1^{\tens(n-u)}: X^{\tens m} \to X^{\tens n}.
\]
\end{remark}

\begin{example}\label{exa-(fg)-coderivations-exi-xie}
Given morphisms \(f,g:XT\to Y\in\cv\) and \(\check\xi:XT\to Y\in\und\cv^a\) such that $f_0=g_0=0$ we can produce an $(f,g)$-coderivation \(\xi:XT\to YT\) by the formula
\begin{equation*}
\inj_m\xi\pr_n =\sum_{k_1+\dots+k_l+r+p_1+\dots+p_q=m}^{l+1+q=n} f_{k_1}\tdt f_{k_l}\tens\xi_r\tens g_{p_1}\tdt g_{p_q}: X^{\tens m} \to Y^{\tens n}.
\end{equation*}
Assume in addition that \(f,g,\check\xi\) have finite support.
Then both coderivations in
\begin{diagram}
XT &\rTTo^{e_X} &X\tilde{T}
\\
\dTTo<\xi &= &\dTTo>\xi
\\
YT &\rTTo^{e_Y} &Y\tilde{T}
\end{diagram}
are well-defined.
We claim that the square commutes, that is, the \((e_Xf,e_Xg)\)-coderivation $e_X\xi$ equals the \((fe_Y,ge_Y)\)-coderivation $\xi e_Y$.
Equalities \(e_Xf=fe_Y\), \(e_Xg=ge_Y\) were verified in \exaref{exa-XT-eX-X-tilde-T}.
In fact, due to \propref{pro-(fg)coder(CXT)-V(CX)} it suffices to verify that
\[ \bigl(XT \rTTo^\xi YT \rTTo^{e_Y(1)} Y\hat{T} \rTTo^{\pr_1} Y\bigr) =\bigl(XT \rTTo^{e_X(1)} X\hat{T} \rTTo^{\xi(1)} Y\hat{T} \rTTo^{\pr_1} Y\bigr).
\]
One easily finds that the both sides are equal to \(\check\xi:XT\to Y\).

The composition $\xi e_Y$ coincides with the morphism \(\xi:XT\to Y\tilde{T}\) obtained from \(\check\xi:XT\to Y\) via \propref{pro-fgxi-XT-Y}.
In fact,
\[ \sum_{k\in\NN} \bigl\langle XT \rTTo^{\pr_k} X^{\tens k} \rTTo^{\xi_k} Y \bigr\rangle =\check\xi.
\]
\end{example}

\begin{proposition}\label{pro-(fg)-coderivation-xi-obtained-from-check-xi}
Let morphisms \(\check f=(f_k)\), \(\check g=(g_k):XT\to Y\in\cv\) be supported on $[K]$.
Associate with them morphisms of homotopy coalgebras \(f,g:X\tilde{T}\to Y\tilde{T}\) as in \propref{pro-f-XT-Y-defines-f}.
Let an $(f,g)$-coderivation \(\xi:X\tilde{T}\to Y\tilde{T}\) be such that for all $n\in\NN^I$ the degree $a$ morphism \(X\tilde{T}(I)\rTTo^{\xi(I)} Y\tilde{T}(I)\rto{\pr_n} Y^{\tens\|n\|}\) be supported on $[Kn]$.
Then there is a unique degree $a$ morphism \(\check\xi=(\xi_k):XT\to Y\in\und\cv^a\) supported on $[K]$ such that \(\xi\) is obtained from $\check\xi$ as in \eqref{eq-XT(I)-xi(I)-YT(I)-pr-Y}.
\end{proposition}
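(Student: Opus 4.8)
The plan is to adapt, almost verbatim, the proof of \propref{pro-f-supported-on[Kn]} to the coderivation setting, replacing its morphism diagrams by the $(f,g)$-coderivation diagrams that appeared in the proof of \propref{pro-(fg)coder(CXT)-V(CX)}. First I would set
\[ \check\xi =\bigl\langle XT \rTTo^{e_X(1)} X\hat{T} \rTTo^{\xi(1)} Y\hat{T} \rTTo^{\pr_1} Y \bigr\rangle \in \und\cv^a, \]
a degree $a$ morphism. Uniqueness of $\check\xi$ is immediate: any $\check\xi$ producing $\xi$ through \eqref{eq-XT(I)-xi(I)-YT(I)-pr-Y} must coincide with this composite upon taking $I=1$ and $n=1$. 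It then remains to show that $\check\xi$ is supported on $[K]$ and that $\xi$ is reconstructed from it via \eqref{eq-XT(I)-xi(I)-YT(I)-pr-Y}.

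Next I would reconstruct $\xi(1)$. There is a commutative diagram of the same shape as \eqref{dia-XT-XT-XT-YT-YT-YT-Y}, with $\xi(1)$ and $\xi(I)$ in place of $\hat{f}(1)$ and $\hat{f}(I)$, and with its bottom horizontal arrow replaced by the coderivation sum $\sum_{i=1}^I f(1)^{\tens(i-1)}\tens\xi(1)\tens g(1)^{\tens(I-i)}$, exactly as along the lower edge of \eqref{dia-C-xi-XT(1)}. The support hypothesis on $\xi(I)\pr_n$, together with the fact that $f,g$ arise from \propref{pro-f-XT-Y-defines-f} so that $f(1)\pr_-$, $g(1)\pr_-$ and $\xi(1)\pr_-$ are all supported on the appropriate $[K\cdot]$, lets me rewrite the three relevant compositions in form~\eqref{eq-MM-phi-N}. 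The left vertical arrows close up by the very same degree $0$ maps as in \eqref{eq-hprn-gprn}, since these express only the structure morphisms $\chi^I$ and $X\tilde{T}(\con^\op)$ restricted to supports and involve neither $f$, $g$ nor $\xi$. As the second of these maps is an isomorphism, the restricted map $\text`\xi(1)\pr_I\text'$ is uniquely determined; hence $\check\xi$ is supported on $[K]$ and $\xi(1)$ is given by the case $I=1$ of \eqref{eq-XT(I)-xi(I)-YT(I)-pr-Y}.

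Then I would treat $\xi(I)$ for general $I$. The $(f,g)$-coderivation property~\eqref{dia-CCGG-(fg)-coderivation}, specialised to $n_i=1$, expresses $\chi^I\cdot\xi(I)$ through the sum $\sum_{u}\tens^I(\sS{^{u-1}}f(1),\xi(1),\sS{^{I-u}}g(1))$ followed by $\chi^I$, just as in the lower square of \eqref{dia-C-xi-XT(1)}. Post-composing with $\pr_n$ and invoking the support hypotheses, I rewrite both horizontal pairs in form~\eqref{eq-MM-phi-N}; the left-hand arrows close up by the same degree $0$ isomorphism $j$ of \eqref{eq-jprm-XXX}. Since $j$ is invertible, $\text`\xi(I)\pr_n\text'$, and therefore $\xi(I)$ itself, is uniquely determined and visibly equals \eqref{eq-XT(I)-xi(I)-YT(I)-pr-Y}.

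The main obstacle is bookkeeping rather than conceptual. I must check that the position sum in the coderivation formula is compatible with these factorisations, i.e. that the content map $\sum_{u=1}^{\|n\|}\tens^{\|n\|}((f_{k_p})_{p=1}^{u-1},\xi_{k_u},(g_{k_p})_{p=u+1}^{\|n\|})$ is the unique map rendering the pertinent square commutative once the degree $0$ isomorphisms of \eqref{eq-hprn-gprn} and \eqref{eq-jprm-XXX} are inverted. Because those isomorphisms are precisely the ones from the morphism case and are independent of $f$, $g$, $\xi$, the reconstruction reduces to the same linear step as in \propref{pro-f-supported-on[Kn]}, now performed entrywise across the coderivation sum. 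The degree $a$ and the attendant Koszul signs are tracked automatically by the $\gr$-enriched structure of $\overline\cv$, so no separate sign analysis is required.
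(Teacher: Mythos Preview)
Your proposal is correct and follows essentially the same route as the paper's own proof: define $\check\xi$ via $e_X(1)\xi(1)\pr_1$, reconstruct $\xi(1)$ from the pentagon diagram (the coderivation analogue of \eqref{dia-XT-XT-XT-YT-YT-YT-Y}) using the isomorphism $g$ of \eqref{eq-hprn-gprn}, then recover $\xi(I)$ from the lower square via the isomorphism $j$ of \eqref{eq-jprm-XXX}. The paper records the resulting explicit formula for $\xi(1)\pr_I$ (its equation \eqref{eq-xi(1)prI}), but otherwise your outline and the paper's argument coincide step for step.
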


\begin{proof}
Clearly, \eqref{eq-XT(I)-xi(I)-YT(I)-pr-Y} is supported on $[Kn]$.
Since \(\check\xi=\bigl\langle XT\rTTo^{e(1)} X\tilde{T}(1)\rTTo^{\xi(1)}\) \(Y\tilde{T}(1)\rto{\pr_1} Y\bigr\rangle\) uniqueness of $\check\xi$ is obvious.
Let us prove that $\xi$ is restored from this $\check\xi$ via equation~\eqref{eq-XT(I)-xi(I)-YT(I)-pr-Y}.
Similarly to \eqref{dia-C-xi-XT(1)} there is a commutative diagram
\begin{diagram}[LaTeXeqno]
X\hat{T} &\rTTo^{\xi(1)} &Y\hat{T}
\\
\dTTo>{X\tilde{T}((\con:I\to\mb1)^\op)} &&\dTTo<{Y\tilde{T}(\con^\op)} &\rdTTo^{\pr_I} &
\\
X\tilde{T}(I) &\rTTo^{\xi(I)} &Y\tilde{T}(I) &\rTTo^{\pr_{(1,\dots,1)}} &Y^{\tens I}
\\
\uTTo>{\chi^I} &&\uTTo<{\chi^I} &\ruTTo_{\pr_1^{\tens I}} &
\\
X\hat{T}^{\tens I} &\rTTo^{\hspace*{-1.6em}\sum_{i=1}^If(1)^{\tens(i-1)}\tens\xi(1)\tens g(1)^{\tens(I-i)}\hspace*{-.6em}} &Y\hat{T}^{\tens I}
\label{dia-XT(I)-xi(I)-YT(I)}
\end{diagram}
In particular, both above pentagons commute.
All three compositions \(\xi(-)\pr_-\) in this diagram can be written in form~\eqref{eq-MM-phi-N} by hypothesis.
We extend the notation `-' to morphisms of degree $a$.
The above pentagons take the form
\begin{diagram}[h=2.4em]
X\hat{T} &\rEpi &\prod_{m\in[KI]}X^{\tens m}
\\
\dTTo<{X\tilde{T}((\con:I\to\mb1)^\op)} &&\dTTo<h &\rdTTo^{\text`\xi(1)\pr_I\text'} &
\\
X\tilde{T}(I) &\rEpi &\prod_{n\in[K]^I}X^{\tens\|n\|} &\rTTo^{\text`\xi(I)\pr_{(1,\dots,1)}\text'\qquad} &Y^{\tens I}
\\
\uTTo<{\chi^I} &&\uTTo<g &\ruTTo_{\sum_{i=1}^I\check f^{\tens(i-1)}\tens\check\xi\tens\check g^{\tens(I-i)}} &
\\
X\hat{T}^{\tens I} &\rEpi &\bigl(\prod_{k\in[K]}X^{\tens k}\bigr)^{\tens I}
\end{diagram}
The both triples of neighbour arrows on the left close up to a commutative square by morphisms $h$ and $g$ from \eqref{eq-hprn-gprn}.
Since the top and the bottom horizontal arrows are epimorphisms, both triangles on the right commute.
Since $g$ is an isomorphism, \(\text`\xi(1)\pr_I\text'=h\cdot g^{-1}\cdot\sum_{i=1}^I\check f^{\tens(i-1)}\tens\check\xi\tens\check g^{\tens(I-i)}\) is determined in a unique way as well as
\begin{multline}
\xi(1)\pr_I =\Bigl\langle X\hat{T} \to \!\prod_{m\in[KI]}\!X^{\tens m} \rto h \!\prod_{n\in[K]^I}\!X^{\tens\|n\|} \rto[\sim]{g^{-1}} \bigl(\prod_{k\in[K]}\!X^{\tens k}\bigr)^{\tens I}
\ifx\chooseClass1
\\ \hfill
\fi
\rTTo^{\sum_{i=1}^I\check f^{\tens(i-1)}\tens\check\xi\tens\check g^{\tens(I-i)}} Y^{\tens I} \Bigr\rangle
\\
=\Bigl\langle X\hat{T} \to \!\prod_{m\in[KI]}\!X^{\tens m} \rTTo^{\Delta^{\restr}} \bigl(\prod_{k\in[K]}\!X^{\tens k}\bigr)^{\tens I} \rTTo^{\sum_{i=1}^I\check f^{\tens(i-1)}\tens\check\xi\tens\check g^{\tens(I-i)}} Y^{\tens I} \Bigr\rangle,
\label{eq-xi(1)prI}
\end{multline}
where the restricted cut comultiplication is given by \eqref{eq-Delta-restr}.
All possible $\xi(1)$ coincide hence are given by \eqref{eq-XT(I)-xi(I)-YT(I)-pr-Y} for $I=1$.

The lower commutative square in diagram~\eqref{dia-XT(I)-xi(I)-YT(I)} can be written as the pentagon commutative for arbitrary $n\in\NN^I$ in
\begin{diagram}
X\tilde{T}(I) &\rTTo^{\xi(I)} &Y\tilde{T}(I) &\rTTo^{\pr_n} &Y^{\tens\|n\|}
\\
\uTTo<{\chi^I} &&&\ruTTo^{\chi^I\pr_n} &\uTTo>\lambda
\\
X\hat{T}^{\tens I} &\rTTo_{\sum_{i=1}^If(1)^{\tens(i-1)}\tens\xi(1)\tens g(1)^{\tens(I-i)}} &Y\hat{T}^{\tens I} &\rTTo_{\tens^{i\in I}\pr_{n_i}} &\otimes^{i\in I} Y^{\tens n_i}
\end{diagram}
By hypothesis the top and the bottom pairs of horizontal arrows can be presented in form~\eqref{eq-MM-phi-N}.
Thus the exterior of the diagram
\begin{diagram}[h=2.4em,w=2em]
X\tilde{T}(I) &\rEpi &\prod_{m\in[Kn]}X^{\tens\|m\|} &\rTTo_{\text`\xi(I)\pr_n\text'} &Y^{\tens\|n\|}
\\
\uTTo<{\chi^I} &&\uTTo<j &&\uTTo>\lambda
\\
X\hat{T}^{\tens I} &\rEpi &\bigotimes_{i\in I} \prod_{k_i\in[Kn_i]}X^{\tens k_i} &\rTTo_{\hspace*{-1.3em}\sum_{u=1}^I\tens^{i\in I}((\text`\hat{f}(1)\pr_{n_i}\text')_{i=1}^{u-1},\text`\xi(1)\pr_{n_u}\text',(\text`\hat{g}(1)\pr_{n_i}\text')_{i=u+1}^I)\hspace*{-.3em}} &\bigotimes_{i\in I} Y^{\tens n_i}
\end{diagram}
commutes.
The isomorphism $j$ closing up the three arrows on the left to a commutative square is given by \eqref{eq-jprm-XXX}.
Since the bottom left horizontal arrow is an epimorphism, the right square commutes as well.
Hence there is a unique possible value for \(\text`\xi(I)\pr_n\text'=j^{-1}[\sum_{u=1}^I\tens^{i\in I}((\text`\hat{f}(1)\pr_{n_i}\text')_{i=1}^{u-1},\text`\xi(1)\pr_{n_u}\text',(\text`\hat{g}(1)\pr_{n_i}\text')_{i=u+1}^I)]\lambda\) and for $\xi(I)$ itself.
Clearly, $\xi(I)$ is reconstructed from $\check\xi$ via \eqref{eq-XT(I)-xi(I)-YT(I)-pr-Y}.
\QED\end{proof}

\subsection{Components of coderivations for homotopy coalgebras}
\label{Components-coderivations-lax-coalgebras}
Suppose that \(f,g:X\tilde{T}\to Y\tilde{T}\in\hCoalg\), \(h:W\tilde{T}\to X\tilde{T}\in\hCoalg\) and an $(f,g)$-coderivation \(\xi:X\tilde{T}\to Y\tilde{T}\) are obtained as in Propositions \ref{pro-f-XT-Y-defines-f} and \ref{pro-fgxi-XT-Y} from maps \(\check f,\check g:XT\to Y\in\cv\), \(\check h:WT\to X\in\cv\) and \(\check\xi:XT\to Y\in\und\cv^a\).
Assume that $\check f$, $\check g$ and $\check\xi$ are supported on $[L]$ and $\check h$ is supported on $[K]$.
Then $h\xi$ is an $(hf,hg)$-coderivation.
For all $n\in\NN^I$ the degree $a$ morphisms \(h(I)\xi(I)\pr_n\) are supported on \([KLn]\).
By \propref{pro-(fg)-coderivation-xi-obtained-from-check-xi} the $(hf,hg)$-coderivation \(\theta=h\xi:W\tilde{T}\to Y\tilde{T}\) can be obtained from a map $\check\theta:WT\to Y$ of degree $a$ supported on $[KL]$.
Namely,
\begin{align*}
\check\theta &= \bigl(WT \rTTo^{e(1)} W\hat{T} \rTTo^{h(1)} X\hat{T} \rTTo^{\xi(1)} Y\hat{T} \rTTo^{\pr_1} Y\bigr)
\\
&= \sum_{k_1,\dots,k_l\le K}^{l\le L} \Bigl(WT \rTTo^{\pr_{k_1+\dots+k_l}} W^{\tens(k_1+\dots+k_l)} \rTTo^{h_{k_1}\tdt h_{k_l}} X^{\tens l} \rTTo^{\xi_l} Y\Bigr),
\end{align*}
using \eqref{eq-f(1)prI-XX-Delta-rest-XY}.
Its components are
\[ \theta_m =\sum_{k_1+\dots+k_l=m}^{k_i\le K,\;l\le L} \Bigl(W^{\tens m} \rTTo^{\lambda^{-1}}_\sim W^{\tens k_1}\tdt W^{\tens k_l} \rTTo^{h_{k_1}\tdt h_{k_l}} X^{\tens l} \rTTo^{\xi_l} Y\Bigr).
\]

Suppose that \(f,g:X\tilde{T}\to Y\tilde{T}\in\hCoalg\), \(h:Y\tilde{T}\to Z\tilde{T}\in\hCoalg\) and an $(f,g)$-coderivation \(\xi:X\tilde{T}\to Y\tilde{T}\) are obtained as in Propositions \ref{pro-f-XT-Y-defines-f} and \ref{pro-fgxi-XT-Y} from maps \(\check f,\check g:XT\to Y\in\cv\), \(\check h:YT\to Z\in\cv\) and \(\check\xi:XT\to Y\in\und\cv^a\).
Assume that $\check f$, $\check g$ and $\check\xi$ are supported on $[K]$ and $\check h$ is supported on $[L]$.
Then $\xi h$ is an $(fh,gh)$-coderivation.
For all $n\in\NN^I$ the degree $a$ morphisms \(\xi(I)h(I)\pr_n\) are supported on \([KLn]\).
By \propref{pro-(fg)-coderivation-xi-obtained-from-check-xi} the $(fh,gh)$-coderivation \(\theta=\xi h:X\tilde{T}\to Z\tilde{T}\) can be obtained from a map $\check\theta:XT\to Z$ of degree $a$ supported on $[KL]$.
Namely,
\begin{multline*}
\check\theta = \bigl(XT \rTTo^{e(1)} X\hat{T} \rTTo^{\xi(1)} Y\hat{T} \rTTo^{h(1)} Z\hat{T} \rTTo^{\pr_1} Z\bigr) =\sum_{\substack{k_1,\dots,k_a,n,l_1,\dots,l_c\le K\\ m=k_1+\dots+k_a+n+l_1+\dots+l_c}}^{a+1+c\le L} \Bigl(XT
\\
\rTTo^{\pr_m} X^{\tens m} \rTTo^{f_{k_1}\tdt f_{k_a}\tens\xi_n\tens g_{l_1}\tdt g_{l_c}} Y^{\tens(a+1+c)} \rTTo^{g_{a+1+c}} Z\Bigr),
\end{multline*}
using \eqref{eq-xi(1)prI}.
Its components are
\[ \theta_m =\hspace*{-1em}\sum_{\substack{k_1,\dots,k_a,n,l_1,\dots,l_c\le K\\ k_1+\dots+k_a+n+l_1+\dots+l_c=m}}^{a+1+c\le L}\hspace*{-1em} \Bigl(X^{\tens m} \rTTo^{f_{k_1}\tdt f_{k_a}\tens\xi_n\tens g_{l_1}\tdt g_{l_c}} Y^{\tens(a+1+c)} \rTTo^{g_{a+1+c}} Z\Bigr).
\]

Let a coderivation \(\xi:X\tilde{T}\to X\tilde{T}\) of odd degree $a$ come as in \propref{pro-fgxi-XT-Y} from a map \(\check\xi:XT\to X\in\und\cv^a\) supported on $[K]$, $K\ge1$.
Then \(\xi^2:X\tilde{T}\to X\tilde{T}\) is a coderivation of degree $2a$.
For all $n\in\NN^I$ the degree $2a$ morphisms \(\xi(I)^2\pr_n\) are supported on \([K^2n]\).
By \propref{pro-(fg)-coderivation-xi-obtained-from-check-xi} the coderivation \(\theta=\xi^2:X\tilde{T}\to X\tilde{T}\) can be obtained from a map $\check\theta:XT\to X$ of degree $2a$ supported on $[K^2]$.
Namely,
\begin{align*}
\check\theta &= \bigl(XT \rTTo^{e(1)} X\hat{T} \rTTo^{\xi(1)} X\hat{T} \rTTo^{\xi(1)} X\hat{T} \rTTo^{\pr_1} X\bigr)
\\
&= \sum_{n\le K,\;a+1+c\le K} \Bigl(XT \rTTo^{\pr_{a+n+c}} X^{\tens(a+n+c)} \rTTo^{1^{\tens a}\tens\xi_n\tens1^{\tens c}} X^{\tens(a+1+c)} \rTTo^{\xi_{a+1+c}} X\Bigr),
\end{align*}
using \eqref{eq-xi(1)prI}.
Its components are
\[ \theta_m =\sum_{n\le K,\;a+1+c\le K}^{a+n+c=m} \Bigl(X^{\tens m} \rTTo^{1^{\tens a}\tens\xi_n\tens1^{\tens c}} X^{\tens(a+1+c)} \rTTo^{\xi_{a+1+c}} X\Bigr).
\]
It is clear from this formula that, actually, $\check\theta$ is supported on $[2K-1]$.

\section{Curved algebras and coalgebras}
Similarly to Positselski \cite{0905.2621} we define curved (co)algebras and their morphisms.
An extra bit of structure, splitting of the (co)unit, is added in order to provide explicitly the bar- and cobar-constructions in the next section.
Also we define curved homotopy coalgebras and their morphisms.
Instead of combining curved coalgebras and curved homotopy coalgebras in a single category we describe a bimodule over these two categories.

\subsection{Curved algebras}
As in Positselski work \cite[Section~3.1]{0905.2621}  we give

\begin{definition}
A \emph{curved algebra} \((A,m_2,m_1\), \(m_0,\eta)\) in $\cv$ is a unital associative algebra \((A,m_2,\eta)\) with a degree~1 derivation $m_1:A\to A$ and a degree~2 \emph{curvature} element $m_0\in A^2$ such that
\[ m_1^2 =(m_0\tens1 -1\tens m_0)m_2, \qquad m_0m_1 =0.
\]
\end{definition}

\begin{proposition}\label{pro-curved-algebra-A-algebra-hatA}
Let \(A=(A,m_2,\eta)\) be an (associative unital) algebra in $\cv$ equipped with a degree~1 morphism $m_1:A\to A$ and a degree~2 element $m_0\in A^2$.
Consider the degree~1 morphism
\[ d =\bigl( \1 \rto{\sigma^{-1}} \1[-1] \simeq \1[-1]\tens\1 \rTTo^{\id\tens\eta} \1[-1]\tens A \bigr).
\]
Existence of an associative unital algebra structure on \(\ddot{A}=A\oplus \1[-1]\tens A\simeq A\oplus A[-1]\) such that
\begin{enumerate}
\renewcommand{\labelenumi}{(\arabic{enumi})}
\item \(\inj_1:A\to\ddot{A}\) is a unital algebra morphism;

\item multiplication \((\1[-1]\tens A)\tens A\to\ddot{A}\) coincides with the obvious right $A$\n-module structure of \(\1[-1]\tens A\);

\item \((1\tens d)m_2=(d\tens1)m_2+m_1:A\to\ddot{A}\);
\label{eq-(1d)m2-(d1)m2-m1}

\item \((d\tens d)m_2=-m_0\);
\label{eq-d2-m0}
\end{enumerate}
is equivalent to \((A,m_2,m_1,m_0,\eta)\) being a curved algebra.
\end{proposition}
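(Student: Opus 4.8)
The plan is to realize $\ddot{A}=A\oplus A[-1]$ as a free right $A$-module on two generators and to show that associativity of the sought multiplication splits, according to the number of occurrences of the degree-$1$ generator $d$, into exactly the four relations defining a curved algebra. First I would record the right $A$-module structure. Conditions (1) and (2) say that $\inj_1(A)$ is a subalgebra and that $\1[-1]\tens A$ carries the right $A$-action $1\tens m_2$; since $d$ factors as $\sigma^{-1}$ followed by $\id\tens\eta$, the composite $\1\rto{d}\ddot{A}$ together with this action gives $d\cdot a=\sigma^{-1}\tens a$, so $\1[-1]\tens A=d\cdot A$. Hence $\ddot{A}=\inj_1(A)\oplus d\cdot A$ is a free right $A$-module with basis $\{\eta,d\}$ in degrees $0$ and $1$.

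Next comes uniqueness. For any associative unital structure satisfying (1)--(4), left multiplication $L_x$ by a fixed $x$ is a right $A$-module endomorphism, hence determined by $x\cdot\eta$ and $x\cdot d$; consequently the multiplication is determined by the products of the generators $\{\eta,d\}$ with the basis $\{\eta,d\}$. These are $a\cdot a'=aa'$ by (1); $d\cdot a=\sigma^{-1}\tens a$ by (2); $a\cdot d=(-1)^{|a|}\,d\cdot a+m_1a$ by (3), the Koszul sign arising when the degree-$1$ map $d$ is moved across $a$ in $(d\tens1)m_2$; and $d\cdot d=-m_0$ by (4). Thus the structure, if it exists, is forced, and existence reduces to checking that the resulting explicit bilinear $\mu$ on $A\oplus d\cdot A$ is associative and unital. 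Unitality is immediate with unit $\eta$.

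For associativity I would test $\mu(\mu\tens1)=\mu(1\tens\mu)$ on the eight summands of $(A\oplus A[-1])^{\tens3}$, organised by the number $r\in\{0,1,2,3\}$ of tensor factors equal to $A[-1]$, i.e.\ the number of $d$'s. The case $r=0$ is the associativity of $m_2$. The case $r=1$ reduces, after pushing the single $d$ outward via (3), to the graded Leibniz identity $m_2m_1=(m_1\tens1+1\tens m_1)m_2$, that is, to $m_1$ being a degree-$1$ derivation. For $r=2$ the essential computation is that for the graded commutator with the odd element $d$ one has $[[a,d],d]=a\,d^2-d^2a$ in $\ddot{A}$; substituting $d^2=-m_0$ and $m_1a=[a,d]$ yields $m_1^2a=m_0a-am_0$, i.e.\ $m_1^2=(m_0\tens1-1\tens m_0)m_2$, the curvature relation. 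Finally $r=3$ reduces, via $d\cdot(d\cdot d)=(d\cdot d)\cdot d$ together with $d^2=-m_0$ and (3), to $m_0m_1=0$, the Bianchi identity. Conversely, if $(A,m_2,m_1,m_0,\eta)$ is a curved algebra, all four identities hold, so $\mu$ is associative; combined with uniqueness this gives the claimed equivalence.

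I expect the main obstacle to be the consistent handling of the Koszul signs produced by the degree-$1$ element $d$ --- in particular the sign in $a\cdot d=(-1)^{|a|}d\cdot a+m_1a$ and in the two- and three-$d$ associativity equations --- together with the fact that these element-level manipulations must ultimately be read as identities between structure morphisms $\ddot{A}^{\tens n}\to\ddot{A}$ in a general category $\cv$, where $-^\bull$ need not be faithful. I would therefore phrase the verifications diagrammatically, using the element notation only as shorthand for the corresponding composites.
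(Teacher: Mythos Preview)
Your proof is correct and follows essentially the same route as the paper: both show that conditions (1)--(4) determine the multiplication on $\ddot{A}$ uniquely and then reduce associativity to a case-by-case check which matches up with the curved algebra axioms. The paper phrases the forward implication slightly more succinctly by observing that (3) makes $m_1$ the inner graded derivation $[-,d]$, whence $m_1^2=[-,d^2]=[-,-m_0]$ and $m_0m_1=[-d^2,d]=0$ follow immediately, but this is exactly the content of your $r=1,2,3$ computations.
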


Strange choice of sign in \eqref{eq-d2-m0} agrees with the \ainf-algebra approach to curved algebras \cite{Lyu-curved-coalgebras}.

\begin{proof}
Let us show that properties (1)--(4) imply that $A$ is a curved algebra.
By (3) $m_1$ extends to an inner derivation of $\ddot{A}$.
By (1) $m_1:A\to A$ is a derivation.
Its square is the commutator with $d^2=-m_0$ by (4).
Property (3) implies that \(m_0m_1=-(d^2)m_1=0\).

When \((A,m_2,m_1,m_0,\eta)\) is a curved algebra properties (1)--(4) fix the multiplication in $\ddot{A}$.
For instance,
\begin{multline*}
\bigl( A\tens(\1[-1]\tens A) \rTTo^{\inj_1\tens\inj_2} \ddot{A}\tens\ddot{A} \rto m \ddot{A} \bigr)
\\
\hskip\multlinegap =\bigl( A\tens\1[-1]\tens A \rTTo^{(12)} \1[-1]\tens A\tens A \rTTo^{1\tens m_2} \1[-1]\tens A \rTTo^{\inj_2} \ddot{A} \bigr) \hfill
\\
+\bigl( A\tens\1[-1]\tens A \rTTo^{1\tens\sigma\tens1} A\tens\1\tens A \rTTo^{m_1\tens1} A\tens A \rto{m_2} A \rTTo^{\inj_1} \ddot{A} \bigr).
\end{multline*}
As a corollary
\begin{multline*}
\inj' =\bigl( A\tens\1[-1] \rTTo^{\inj_1\tens\sigma d\inj_2} \ddot{A}\tens\ddot{A} \rto m \ddot{A} \bigr)
\\
=\bigl( A\tens\1[-1] \rTTo^{(12)} \1[-1]\tens A \rto{\inj_2} \ddot{A} \bigr) +\bigl( A\tens\1[-1] \rTTo^{1\tens\sigma} A\tens\1 =A \rto{m_1} A \rto{\inj_1} \ddot{A} \bigr).
\end{multline*}
Therefore, $\ddot{A}$ can be presented as a direct sum using injections $\inj'$ and $\inj_1$.
Notice also that
\[ \inj_2 =\bigl( \1[-1]\tens A \rTTo^{\sigma d\inj_2\tens\inj_1} \ddot{A}\tens\ddot{A} \rto m \ddot{A} \bigr).
\]
It's easy to compute
\begin{multline*}
\bigl( (A\tens\1[-1])\tens(\1[-1]\tens A) \rTTo^{\inj'\tens\inj_2} \ddot{A}\tens\ddot{A} \rto m \ddot{A} \bigr)
\\
=\bigl( A\tens\1[-1]\tens\1[-1]\tens A \rTTo^{1\tens\sigma\tens\sigma\tens1} A\tens\1\tens A \rTTo^{1\tens m_0\tens1} A\tens A\tens A \rto{m_2^{(3)}} A \rto{\inj_1} \ddot{A} \bigr).
\end{multline*}

Associativity of the multiplication in $\ddot{A}$ is verified case by case.
\QED\end{proof}

\begin{definition}[similar to Positselski \cite{0905.2621} Section~3.1]
A \emph{morphism of curved algebras} \(\sff:A\to B\) is a pair \((\sff_1,\sff_0)\) consisting of a homomorphism of unital algebras \(\sff_1:A\to B\) and a degree~1 \emph{change-of-connection} element \(\sff_0\in B^1\) such that
\begin{equation}
\sff_1m_1 +\sff_1(1\tens\sff_0-\sff_0\tens1)m_2 =m_1\sff_1, \qquad m_0 -\sff_0m_1 -(\sff_0\tens\sff_0)m_2 =m_0\sff_1.
\label{eq-f1m1-f1ff1m-m1f1}
\end{equation}
\end{definition}

\begin{proposition}\label{pro-algebra-homomorphism-morphism-curved-algebras}
Let $A$, $B$ be curved algebras in $\cv$, let \(\sff_1:A\to B\) be a homomorphism of unital algebras and let \(\sff_0\in B^1\).
Existence of a unital algebra homomorphism \(\ddot\sff:\ddot{A}\to\ddot{B}\) such that
\begin{enumerate}
\renewcommand{\labelenumi}{(\arabic{enumi})}
\item \(\ddot\sff|_A=\sff_1\);

\item \(d\ddot\sff=\sff_0+d\);
\end{enumerate}
is equivalent to \((\sff_1,\sff_0):A\to B\) being a morphism of curved algebras.
\end{proposition}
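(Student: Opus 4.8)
The plan is to exploit, exactly as in \propref{pro-curved-algebra-A-algebra-hatA}, the fact that $\ddot A=A\oplus\1[-1]\tens A$ is generated as an algebra by the subalgebra $\inj_1(A)$ together with the single degree~$1$ element $d$. Indeed, the formulas $\inj_2=m\circ(\sigma d\inj_2\tens\inj_1)$ and $\inj'=m\circ(\inj_1\tens\sigma d\inj_2)$ established in the proof of that proposition present the summand $\1[-1]\tens A$ as $d\cdot A$. Consequently any unital algebra homomorphism $\ddot\sff$ satisfying (1) and (2) is forced to equal $\sff_1$ on $\inj_1(A)$ and to send $d\cdot a$ to $(\sff_0+d)\sff_1(a)$; thus $\ddot\sff$, if it exists, is unique and given by the explicit block formula $\ddot\sff|_A=\inj_1\sff_1$, $\ddot\sff|_{A[-1]}\colon a\mapsto(\sff_0+d)\sff_1(a)$. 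The whole statement therefore reduces to deciding when this prescribed linear map is multiplicative.

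For necessity I would assume $\ddot\sff$ is an algebra homomorphism and feed the two structure relations of $\ddot A$ through it. Applying $\ddot\sff$ to the graded-commutator relation (property~(3) of \propref{pro-curved-algebra-A-algebra-hatA}), which in $\ddot A$ takes the element form $a\cdot d-(-1)^{\deg a}d\cdot a=m_1(a)$, and using (1), (2) together with the corresponding relation in $\ddot B$, one obtains after comparing the two graded components the first equation of \eqref{eq-f1m1-f1ff1m-m1f1}. Applying $\ddot\sff$ to the curvature relation $(d\tens d)m_2=-m_0$ (property~(4)) gives $(\sff_0+d)^2=-m_0\sff_1$ in $\ddot B$; expanding the square, substituting $d^2=-m_0$ of $\ddot B$ and replacing the cross term $\sff_0 d+d\sff_0$ by $m_1(\sff_0)$ via the same commutator relation, yields the second equation of \eqref{eq-f1m1-f1ff1m-m1f1}.

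For sufficiency I would take the explicit $\ddot\sff$ above and check the homomorphism identity $m_{\ddot A}\cdot\ddot\sff=(\ddot\sff\tens\ddot\sff)\cdot m_{\ddot B}$ on the four summands of $(A\oplus A[-1])\tens(A\oplus A[-1])$. On $A\tens A$ this is just that $\sff_1$ is a homomorphism; on $A\tens A[-1]$ and $A[-1]\tens A$ it follows from the first equation of \eqref{eq-f1m1-f1ff1m-m1f1} together with the right-module rule and the multiplication formulas recalled in \propref{pro-curved-algebra-A-algebra-hatA}; and on $A[-1]\tens A[-1]$ it follows from the second (curvature) equation. Unitality is automatic since $\sff_1$ preserves units and $\ddot\sff\inj_1=\inj_1\sff_1$. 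The delicate point throughout is the graded sign bookkeeping: tracking the Koszul signs when $d$ is commuted past elements of $A$ and when $\sff_0+d$ is squared, and correctly matching which graded component ($\inj_1(B)$ versus $\1[-1]\tens B$) of each mixed product reproduces which of the two equations in \eqref{eq-f1m1-f1ff1m-m1f1}. This is the main obstacle; once the signs are pinned down the verification is routine case-checking parallel to the associativity check in \propref{pro-curved-algebra-A-algebra-hatA}.
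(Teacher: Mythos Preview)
Your proposal is correct and follows essentially the same approach as the paper: necessity by pushing relations (3) and (4) of \propref{pro-curved-algebra-A-algebra-hatA} through $\ddot\sff$, sufficiency by writing down the forced formula for $\ddot\sff$ and checking multiplicativity case by case on the summands of $\ddot A\tens\ddot A$. The paper is terser (it leaves the case-by-case verification to the reader and does not spell out the uniqueness argument), but the structure of the argument is the same.
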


\begin{proof}
Assume that $\ddot\sff$ is an algebra morphism.
Applying $\ddot\sff$ to identity~\eqref{eq-(1d)m2-(d1)m2-m1} of \propref{pro-curved-algebra-A-algebra-hatA} in $\ddot{A}$ we get
\[ [\sff_1\tens(\sff_0+d)]m_2 =[(\sff_0+d)\tens\sff_1]m_2+m_1\sff_1: A\to\ddot{B}.
\]
Using \eqref{eq-(1d)m2-(d1)m2-m1} of \propref{pro-curved-algebra-A-algebra-hatA} in $\ddot{B}$ we deduce the first of equations~\eqref{eq-f1m1-f1ff1m-m1f1}.

Applying $\ddot\sff$ to identity~\eqref{eq-d2-m0} of \propref{pro-curved-algebra-A-algebra-hatA} in $\ddot{A}$ we get
\[ [(\sff_0+d)\tens(\sff_0+d)]m_2 =-m_0\sff_1 \in \ddot{B}^2.
\]
Using \eqref{eq-d2-m0} of \propref{pro-curved-algebra-A-algebra-hatA} in $\ddot{B}$ and the identity \((\sff_0\tens d+d\tens\sff_0)m_2=\sff_0m_1\) we deduce the second of equations~\eqref{eq-f1m1-f1ff1m-m1f1}.

Assume that $(\sff_1,\sff_0)$ is a curved algebra morphism.
The map $\ddot\sff$ is determined by \(\ddot\sff|_A=\sff_1\),
\[ \inj_2\ddot\sff =\bigl( \1[-1]\tens A \rTTo^{\sigma(\sff_0\inj_1+d\inj_2)\tens\sff_1\inj_1} \ddot{B}\tens\ddot{B} \rto m \ddot{B} \bigr).
\]
Case by case one checks that $\ddot\sff$ is an algebra morphism.
\QED\end{proof}

The category whose objects are curved algebras and whose morphisms are morphisms of curved algebras is denoted $\cAlg$.
The composition \((\sfh_1,\sfh_0)=\bigl(A\rTTo^{(\sff_1,\sff_0)} B\) \(\rTTo^{(\sfg_1,\sfg_0)} C\bigr)\) is chosen so that \(\ddot\sfh=\ddot\sff\ddot\sfg\), namely, \(\sfh_1=\sff_1\sfg_1\) and \(\sfh_0=\sff_0\sfg_1+\sfg_0\).
The identity morphisms are \((\id,0)\).
The assignment $\cAlg\to\Alg$, \(A\mapsto\ddot{A}\), \(\sff=(\sff_1,\sff_0)\mapsto\ddot\sff\), is a functor.

\begin{definition}[\textit{cf.} \cite{Lyu-curved-coalgebras} Definition~1.4]
A \emph{unit-complemented curved algebra} \((A,m_2\), \(m_1,m_0,\eta,\sfv)\) is a curved algebra \((A,m_2,m_1,m_0,\eta)\) equipped with a \emph{splitting of the unit} \(\sfv:A\to\1\in\cv\), a morphism such that \(\eta\sfv=\id_\1\).
Morphisms of such algebras are morphisms of curved algebras (ignoring the splitting).
The category of unit-complemented curved algebras is denoted $\ucAlg$.
\end{definition}

The same notions as above can be expressed through shifted operations \(b_2:A[1]\tens A[1]\to A[1]\), \(b_1:A[1]\to A[1]\), \(b_0\in A[1]^1\), \(\deg b_n=1\) for \(n=0,1,2\), a degree $-1$ map \(\bfeta:\1\to A[1]\) and a degree 1 map \(\bv:A[1]\to\1\) (splitting of $\bfeta$).
The precise relationship with non-shifted operations is
\begin{gather*}
m_n =(-1)^n\sigma^{\tens n}\cdot b_n\cdot\sigma^{-1}: A^{\tens n} \to A, \qquad n=0,1,2,
\\
\eta =\bigl( \1 \rTTo^{\bfeta} A[1] \rTTo^{\sigma^{-1}} A \bigr),
\\
\sfv =\bigl( A \rTTo^\sigma A[1] \rTTo^\bv \1 \bigr).
\end{gather*}
The shifted operations define a (unit-complemented) curved algebra iff
\begin{gather*}
(1\tens b_2)b_2 +(b_2\tens1)b_2 =0, \qquad b_2b_1 +(1\tens b_1 +b_1\tens1)b_2 =0,
\\
b_1^2 +(b_0\tens1 +1\tens b_0)b_2 =0, \qquad b_0b_1 =0,
\\
(1\tens\bfeta)b_2 =1_{A[1]}, \quad (\bfeta\tens1)b_2 =-1_{A[1]}, \qquad \bfeta b_1 =0, \qquad \bfeta\bv =1_\1,
\end{gather*}
This makes clear relationship with \ainf-algebras, see \cite{Lyu-curved-coalgebras}.
\textit{E.g.} the first four equations can be combined into
\[ \sum_{r+k+t=n} (1^{\tens r}\tens b_k\tens1^{\tens t})b_{r+1+t} =0: A[1]^{\tens n} \to A[1], \qquad \forall\, n\ge0.
\]

A morphism of (unit-\hspace{0pt}complemented) curved algebras in shifted version is given by \(f_1:A[1]\to B[1]\in\cv\) and \(f_0\in B[1]^0\) such that
\begin{align*}
\sff_1 &=\bigl( A \rTTo^\sigma A[1] \rTTo^{f_1} B[1] \rTTo^{\sigma^{-1}} B \bigr),
\\
\sff_0 &=\bigl( \1 \rTTo^{f_0} B[1] \rTTo^{\sigma^{-1}} B \bigr).
\end{align*}
The equations satisfied by shifted data of a morphism are
\begin{gather*}
(f_1\tens f_1)b_2 =b_2f_1, \qquad f_1b_1 +(f_1\tens f_0)b_2 +(f_0\tens f_1)b_2 =b_1f_1,
\\
b_0 +f_0b_1 +(f_0\tens f_0)b_2 =b_0f_1, \qquad \bfeta_Af_1 =\bfeta_B, \qquad h_0=g_0+f_0g_1.
\end{gather*}
The first three equations can be combined into
\[ \sum_{i_1+\dots+i_k=n} (f_{i_1}\tens f_{i_2}\tdt f_{i_k})b^B_k =\sum_{r+k+t=n} (1^{\tens r}\tens b^A_k\tens1^{\tens t})f_{r+1+t}, \quad \forall\, n\ge0.
\]

\subsection{Curved coalgebras}
Let us dualize the notions of the previous section considering the symmetric monoidal functor \(\1[-]':\cz\to\cv^\op\), \(n\mapsto\1[n]'=\1[-n]\), in place of given \(\1[-]:\cz\to\cv\), \(n\mapsto\1[n]\).
Of course, the symmetric Monoidal additive category \((\cv^\op,\tens,\lambda^{-1})\) is no longer closed.
Nevertheless it inherits from $\cv$ the translation structure.
A morphism $X\to Z$ of degree $k$ in $\cv$ is the same as a morphism \(X\to Z\tens\1[k]\in\cv\) by \eqref{eq-V(XZ)k-V(XZk)}.
Correspondingly a morphism $Z\to X$ of degree $k$ in $\cv^\op$ is by definition \(Z\tens\1[k]\to X\in\cv^\op\), which is the same as \(Z\to X\tens\1[-k]=X\tens\1[k]'\in\cv^\op\).
In particular, the degree $k$ morphism \(\sigma^{-k}:Z[k]\to Z\) in $\cv$ leads to a degree $k$ morphism \(Z\to Z[k]=Z[-k]'\in\cv^\op\) which we may denote by \(\sigma^{-k}_{\cv^\op}\).
Considering curved algebras in $\cv^\op$ we come to the following coalgebras in $\cv$.
A little bit of confusion is introduced by our choice of dual operations: \(\delta_2=m_2^\op\), \(\delta_1=m_1^\op\) but \(\delta_0=-m_0^\op\); \(\sfg_1=\sff_1^\op\) but \(\sfg_0=-\sff_0^\op\).
We shall comment on this on appropriate occasions.

\begin{definition}[similar to Positselski \cite{0905.2621} Section~4.1]
A \emph{curved coalgebra} \((C,\delta_2,\delta_1\), \(\delta_0,\eps)\) in $\cv$ is a coassociative counital coalgebra \((C,\delta_2,\eps)\) with a degree~1 coderivation $\delta_1:C\to C$ and a degree~2 \emph{curvature} functional $\delta_0:C\to\1$ such that
\[ \delta_1^2 =\delta_2(1\tens\delta_0 -\delta_0\tens1), \qquad \delta_1\delta_0 =0.
\]
\end{definition}

\begin{proposition}
Let \(C=(C,\delta_2,\eps)\) be a (coassociative counital) coalgebra in $\cv$ equipped with a degree~1 morphism $\delta_1:C\to C$ and a degree~2 functional $\delta_0:C\to\1$.
Consider the degree~1 functional
\[ d =\bigl( \1[1]\tens C \rTTo^{\id\tens\eps} \1[1]\tens\1 \simeq \1[1] \rto{\sigma^{-1}} \1 \bigr).
\]
Existence of a coassociative counital coalgebra structure $\Delta$ on \(\ddot{C}=C\oplus\1[1]\tens C\simeq C\oplus C[1]\) such that
\begin{enumerate}
\renewcommand{\labelenumi}{(\arabic{enumi})}
\item \(\pr_1:\ddot{C}\to C\) is a counital coalgebra morphism;

\item \(\bigl(\ddot{C}\rto\Delta \ddot{C}\tens\ddot{C}\rTTo^{\pr_2\tens\pr_1} (\1[1]\tens C)\tens C\bigr)=\bigl(\ddot{C}\rTTo^{\pr_2} \1[1]\tens C\rTTo^{1\tens\delta_2} \1[1]\tens C\tens C\bigr)\);

\item \(\Delta(\pr_1\tens\pr_2d)=\Delta(\pr_2d\tens\pr_1)+\pr_1\delta_1:\ddot{C}\to C\);

\item \(\Delta(\pr_2d\tens\pr_2d)=-\pr_1\delta_0:\ddot{C}\to\1\);
\end{enumerate}
is equivalent to \((C,\delta_2,\delta_1,\delta_0,\eps)\) being a curved coalgebra.
\end{proposition}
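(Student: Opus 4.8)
The plan is to treat this statement as the formal dual of \propref{pro-curved-algebra-A-algebra-hatA}. Under the dictionary fixed just before the definition of curved coalgebras --- $\delta_2=m_2^\op$, $\delta_1=m_1^\op$, $\delta_0=-m_0^\op$, $\eps=\eta^\op$, and $\ddot C=(\ddot A)^\op$ with $\1[1]=\1[-1]'$ --- the functional $d$ of the present proposition is exactly the opposite of the morphism $d$ of \propref{pro-curved-algebra-A-algebra-hatA} read in $(\cv^\op,\tens,\lambda^{-1})$ with translation structure $\1[-]'$, and conditions (1)--(4) here are the arrow-reversals of conditions (1)--(4) there. Since $\cv^\op$ is not closed, I would not cite the earlier proposition verbatim but instead run its proof with every arrow reversed, paying attention to the single sign coming from $\delta_0=-m_0^\op$; this is precisely what produces the minus sign in condition (4) and keeps the curvature identity in the stated form $\delta_1^2=\delta_2(1\tens\delta_0-\delta_0\tens1)$.

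\emph{(1)--(4) $\Rightarrow$ curved coalgebra.} Dually to the algebra argument, condition (3) exhibits $\delta_1$ as the corestriction to $C$ of the inner coderivation $\Delta(\pr_1\tens\pr_2 d)-\Delta(\pr_2 d\tens\pr_1)$ of $\ddot C$; since by (1) the projection $\pr_1$ is a counital coalgebra morphism, it follows that $\delta_1:C\to C$ is a coderivation of $C$. Iterating, $\delta_1^2$ is the cocommutator with the ``square'' $\Delta(\pr_2 d\tens\pr_2 d)$, which by (4) equals $-\pr_1\delta_0$; unwinding this cocommutator yields $\delta_1^2=\delta_2(1\tens\delta_0-\delta_0\tens1)$. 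Finally, combining (3) and (4) as in the algebra computation $m_0m_1=-(d^2)m_1=0$, but with arrows reversed, gives $\delta_1\delta_0=0$. Thus $(C,\delta_2,\delta_1,\delta_0,\eps)$ is a curved coalgebra.

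\emph{Curved coalgebra $\Rightarrow$ (1)--(4).} Here conditions (1)--(4) determine $\Delta$ uniquely: relative to the splitting $\ddot C\tens\ddot C\simeq(C\tens C)\oplus(C[1]\tens C)\oplus(C\tens C[1])\oplus(C[1]\tens C[1])$, the four components of $\Delta$ are read off as the arrow-reversals of the four multiplication formulas displayed in the proof of \propref{pro-curved-algebra-A-algebra-hatA}, the component into $C[1]\tens C[1]$ being governed by the curvature $\delta_0$ with the sign fixed by (4). One then checks counitality against $\eps$ and verifies coassociativity of this $\Delta$ case by case on the four summands.

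I expect the coassociativity verification, together with the Koszul-sign bookkeeping induced by the degree~$1$ functional $d$ and by the shift isomorphisms $\sigma^{\pm1}$ (now read through $\1[-]'$), to be the main obstacle: the computation is routine but the direction-reversal of composition in $\cv^\op$ forces a careful, sign-correct transcription rather than a black-box appeal to the earlier result. I would organise it exactly as the algebra proof, so that each associativity case there corresponds to one coassociativity case here.
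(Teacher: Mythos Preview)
Your proposal is correct and follows essentially the same approach as the paper: dualise \propref{pro-curved-algebra-A-algebra-hatA}, track the sign in condition~(4), give the explicit comultiplication components, and verify coassociativity case by case. The only refinement the paper adds is that the sign in~(4) has \emph{two} sources --- the Koszul identity $(1\tens d)(d\tens1)=-d\tens d$ (versus $(d\tens1)(1\tens d)=d\tens d$) arising from the reversal of composition, and the convention $\delta_0=-m_0^\op$ --- rather than the single source you name; you do flag the Koszul bookkeeping as a concern, so this is not a gap, just something to make explicit when you write out the details.
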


\begin{proof}
It is obtained by dualising the proof of \propref{pro-curved-algebra-A-algebra-hatA}.
In order to see that the above condition~(4) is dual to condition~(4) of \propref{pro-curved-algebra-A-algebra-hatA} notice that \((d\tens1)(1\tens d)=d\tens d\) but \((1\tens d)(d\tens1)=-d\tens d\).
Another sign comes from the relation \(\delta_0=-m_0^\op\).

For instance, when $C$ is a curved coalgebra, we have
\begin{multline*}
\bigl( \ddot{C} \rto\Delta \ddot{C}\tens\ddot{C} \rTTo^{\pr_1\tens\pr_2} C\tens(\1[1]\tens C) \bigr)
\\
\hskip\multlinegap =\bigl( \ddot{C} \rTTo^{\pr_2} \1[1]\tens C \rTTo^{1\tens\delta_2} \1[1]\tens C\tens C \rTTo^{(12)} C\tens\1[1]\tens C \bigr) \hfill
\\
+\bigl( \ddot{C} \rTTo^{\pr_1} C \rto{\delta_2} C\tens C \rTTo^{\delta_1\tens\sigma\tens1} C\tens\1[1]\tens C \bigr).
\end{multline*}
As a corollary
\begin{multline*}
\pr' =\bigl( \ddot{C} \rto\Delta \ddot{C}\tens\ddot{C} \rTTo^{\pr_1\tens\pr_2d\sigma} C\tens\1[1] \bigr)
\\
=\bigl( \ddot{C} \rTTo^{\pr_2} \1[1]\tens C \rTTo^{(12)} C\tens\1[1] \bigr) +\bigl( \ddot{C} \rTTo^{\pr_1} C \rTTo^{\delta_1\tens\sigma} C\tens\1[1] \bigr).
\end{multline*}
Therefore, $\ddot{C}$ can be presented as a direct sum using projections $\pr'$ and $\pr_1$.
Notice also that
\[ \pr_2 =\bigl( \ddot{C} \rto\Delta \ddot{C}\tens\ddot{C} \rTTo^{\pr_2d\sigma\tens\pr_1} \1[1]\tens C \bigr).
\]
It's easy to compute
\begin{multline*}
\bigl( \ddot{C} \rto\Delta \ddot{C}\tens\ddot{C} \rTTo^{\pr'\tens\pr_2} (C\tens\1[1])\tens(\1[1]\tens C) \bigr)
\\
=\bigl( \ddot{C} \rTTo^{\pr_1} C \rTTo^{\delta_2^{(3)}} C\tens C\tens C \rTTo^{1\tens\delta_0\tens1} C\tens\1\tens\1\tens C \rTTo^{1\tens\sigma\tens\sigma\tens1} C\tens\1[1]\tens\1[1]\tens C \bigr).
\end{multline*}

Coassociativity of the comultiplication in $\ddot{C}$ can be verified case by case.
\QED\end{proof}

\begin{definition}[similar to Positselski \cite{0905.2621} Section~4.1]
%\label{def-morphism-curved-coalgebras}
A \emph{morphism of curved coalgebras} \(\sfg:C\to D\) is a pair \((\sfg_1,\sfg_0)\) consisting of a coalgebra morphism \(\sfg_1:C\to D\) and a degree~1 functional \(\sfg_0:C\to\1\) such that
\begin{equation}
\delta^C_1\sfg_1 +\delta^C_2(\sfg_0\tens\sfg_1 -\sfg_1\tens\sfg_0) =\sfg_1\delta^D_1, \qquad
\delta^C_0 -\delta^C_1\sfg_0 -\delta^C_2(\sfg_0\tens\sfg_0) =\sfg_1\delta^D_0.
\label{eq-deltas-gees}
\end{equation}
\end{definition}

\begin{proposition}
Let $C$, $D$ be curved coalgebras in $\cv$, let \(\sfg_1:C\to D\) be a homomorphism of counital coalgebras and let \(\sfg_0:C\to\1\) be a degree~1 functional.
Existence of a counital coalgebra homomorphism \(\ddot\sfg:\ddot{C}\to\ddot{D}\) such that
\begin{enumerate}
\renewcommand{\labelenumi}{(\arabic{enumi})}
\item \(\ddot\sfg\pr_1=\pr_1\sfg_1:\ddot{C}\to D\);

\item \(\ddot\sfg\pr_2d=-\pr_1\sfg_0+\pr_2d:\ddot{C}\to\1\);
\end{enumerate}
is equivalent to \((\sfg_1,\sfg_0):C\to D\) being a morphism of curved coalgebras.
\end{proposition}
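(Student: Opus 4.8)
The plan is to dualise the proof of \propref{pro-algebra-homomorphism-morphism-curved-algebras} in every respect: products replace coproducts, multiplications $m$ become comultiplications $\Delta$, injections become projections, and all arrows are reversed, so that the two curved-coalgebra morphism equations \eqref{eq-deltas-gees} take the place of \eqref{eq-f1m1-f1ff1m-m1f1}. Throughout one must keep the sign conventions $\delta_0=-m_0^\op$ and $\sfg_0=-\sff_0^\op$ in mind; these are precisely what account for the sign discrepancies between \eqref{eq-deltas-gees} and \eqref{eq-f1m1-f1ff1m-m1f1}. I will use the conditions (1)--(4) characterising the comultiplications on $\ddot{C}$ and $\ddot{D}$ supplied by the coalgebra counterpart of \propref{pro-curved-algebra-A-algebra-hatA}.

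For necessity, suppose $\ddot\sfg:\ddot{C}\to\ddot{D}$ is a counital coalgebra homomorphism obeying (1) and (2). I would precompose condition~(3) of $\ddot{D}$, namely $\Delta_{\ddot{D}}(\pr_1\tens\pr_2 d)=\Delta_{\ddot{D}}(\pr_2 d\tens\pr_1)+\pr_1\delta^D_1$, with $\ddot\sfg$, push $\ddot\sfg$ across the comultiplication using $\ddot\sfg\Delta_{\ddot{D}}=\Delta_{\ddot{C}}(\ddot\sfg\tens\ddot\sfg)$, and substitute (1) and (2). Applying condition~(3) of $\ddot{C}$ to the resulting expression isolates the $\delta^C_1$- and $\delta^D_1$-terms and yields the first equation of \eqref{eq-deltas-gees}. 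The second equation is obtained identically from condition~(4) of $\ddot{D}$, namely $\Delta_{\ddot{D}}(\pr_2 d\tens\pr_2 d)=-\pr_1\delta^D_0$, together with condition~(4) of $\ddot{C}$ and the auxiliary relation $\Delta_{\ddot{C}}(\pr_2 d\tens\pr_1\sfg_0+\pr_1\sfg_0\tens\pr_2 d)=\pr_1\delta^C_1\sfg_0$, the coalgebra dual of the identity $(\sff_0\tens d+d\tens\sff_0)m_2=\sff_0m_1$ used in the algebra case; it follows from condition~(3) together with coassociativity.

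For sufficiency, suppose $(\sfg_1,\sfg_0)$ is a morphism of curved coalgebras. I would \emph{define} $\ddot\sfg$ by dualising the explicit formula for $\ddot\sff$: its first component is prescribed by~(1), $\ddot\sfg\pr_1=\pr_1\sfg_1$, and its second by
\[ \ddot\sfg\pr_2 =\bigl( \ddot{C} \rto\Delta \ddot{C}\tens\ddot{C} \rTTo^{(\pr_2 d-\pr_1\sfg_0)\sigma\tens\pr_1\sfg_1} \1[1]\tens D \bigr), \]
using the presentation $\pr_2=\Delta_{\ddot{C}}(\pr_2 d\sigma\tens\pr_1)$ established for $\ddot{C}$ in the proof of the coalgebra counterpart of \propref{pro-curved-algebra-A-algebra-hatA}. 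Postcomposing this formula with $d_D$ and invoking $\sfg_1\eps_D=\eps_C$ together with the counit axiom of $\ddot{C}$ recovers condition~(2), confirming consistency. It then remains to check directly that $\ddot\sfg$ is counital and comultiplicative, which I would verify case by case over the four combinations of $\pr_1,\pr_2$ with $\Delta$; the mixed cases and the $\pr_2\tens\pr_2$ case close up exactly by the two equations \eqref{eq-deltas-gees}, while the $\pr_1\tens\pr_1$ case is comultiplicativity of $\sfg_1$.

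I expect the only genuine difficulty to be the sign- and shift-bookkeeping in this case-by-case verification. The shifts $\sigma,\sigma^{-1}$ attached to $d$, the asymmetry between $(\pr_1\tens\pr_2 d)$ and $(\pr_2 d\tens\pr_1)$ (which differ by the sign recorded via $(1\tens d)(d\tens1)=-d\tens d$ in the preceding proof), and the convention $\sfg_0=-\sff_0^\op$ must all be tracked so that the curvature terms cancel with the correct sign. Once the formula for $\ddot\sfg\pr_2$ is fixed, however, each case collapses onto one of the defining identities of the curved coalgebra structures or onto \eqref{eq-deltas-gees}, exactly as the closing line of the proof of \propref{pro-algebra-homomorphism-morphism-curved-algebras} asserts for the algebra case.
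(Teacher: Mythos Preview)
Your proposal is correct and follows essentially the same approach as the paper: dualise \propref{pro-algebra-homomorphism-morphism-curved-algebras} with attention to the sign convention $\sfg_0=-\sff_0^\op$, derive \eqref{eq-deltas-gees} from conditions (3) and (4) on $\ddot{D}$ via the coalgebra homomorphism property, and for the converse define $\ddot\sfg\pr_2$ by exactly the formula you wrote (the paper has the identical expression $(-\pr_1\sfg_0+\pr_2 d)\sigma\tens\pr_1\sfg_1$) and verify comultiplicativity case by case. Your write-up is in fact more detailed than the paper's, which compresses the argument to three lines.
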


Different signs in the second equations of \eqref{eq-f1m1-f1ff1m-m1f1} and \eqref{eq-deltas-gees} are due to the fact that \((\sff_0\tens1)(1\tens\sff_0)=\sff_0\tens\sff_0\) but \((1\tens\sfg_0)(\sfg_0\tens1)=-\sfg_0\tens\sfg_0\).

\begin{proof}
Dualize the proof of \propref{pro-algebra-homomorphism-morphism-curved-algebras}, taking into account that \(\sfg_0=-\sff_0^\op\).
Thus, properties (1)--(2) imply \eqref{eq-deltas-gees}.

Given a curved coalgebra morphism $(\sfg_1,\sfg_0)$, the map $\ddot\sfg$ is determined by \(\ddot\sfg\pr_1=\pr_1\sfg_1\) and
\[ \ddot\sfg\pr_2 =\bigl( \ddot{C} \rto\Delta \ddot{C}\tens\ddot{C} \rTTo^{(-\pr_1\sfg_0+\pr_2d)\sigma\tens\pr_1\sfg_1} \1[1]\tens D \bigr).
\]
Case by case one checks that $\ddot\sfg$ is an coalgebra morphism.
\QED\end{proof}

The category whose objects are curved coalgebras and whose morphisms are morphisms of curved coalgebras is denoted $\cCoalg$.
The composition \((\sfh_1,\sfh_0)=\bigl(C\rTTo^{(\sff_1,\sff_0)} D\) \(\rTTo^{(\sfg_1,\sfg_0)} E\bigr)\) is chosen so that \(\ddot\sfh=\ddot\sff\ddot\sfg\), namely, \(\sfh_1=\sff_1\sfg_1\) and \(\sfh_0=\sff_0+\sff_1\sfg_0\).
The identity morphisms are \((\id,0)\).
The assignment $\cCoalg\to\Coalg$, \(C\mapsto\ddot{C}\), \(\sfg=(\sfg_1,\sfg_0)\mapsto\ddot\sfg\), is a functor.

\begin{definition}
A \emph{counit-complemented curved coalgebra} \((C,\delta_2,\delta_1,\delta_0,\eps,\sfw)\) is a curved coalgebra \((C,\delta_2,\delta_1,\delta_0,\eps)\) equipped with a \emph{splitting of the counit} \(\sfw:\1\to C\in\cv\), an element of $C^0$ such that \(\sfw\cdot\eps=1_\1\).
Morphisms of such coalgebras are morphisms of curved coalgebras (ignoring the splitting).
The category of counit-complemented curved coalgebras is denoted $\ucCoalg$.
\end{definition}

The same notions as above can be expressed through shifted operations \(\xi_2:C[-1]\to C[-1]\tens C[-1]\), \(\xi_1:C[-1]\to C[-1]\), \(\xi_0:C[-1]\to\1\), \(\deg\xi_n=1\) for \(n=0,1,2\), a degree $-1$ map \(\beps:C[-1]\to\1\) and a degree 1 element \(\bw\in C[-1]^1\) (splitting of $\beps$).
The precise relationship with non-shifted operations is
\begin{gather*}
\delta_n =(-1)^n\sigma^{-1}\cdot\xi_n\cdot\sigma^{\tens n}: C\to C^{\tens n}, \qquad  n=0,1,2,
\\
\eps =\bigl( C \rTTo^{\sigma^{-1}} C[-1] \rTTo^\beps \1 \bigr),
\\
\sfw =\bigl( \1 \rTTo^\bw C[-1] \rTTo^\sigma C \bigr).
\end{gather*}
The shifted operations define a (counit-complemented) curved coalgebra iff
\begin{gather*}
\xi_2(1\tens\xi_2) +\xi_2(\xi_2\tens1) =0, \qquad \xi_1\xi_2 +\xi_2(1\tens\xi_1 +\xi_1\tens1) =0,
\\
\xi_1^2 +\xi_2(1\tens\xi_0 +\xi_0\tens1) =0, \qquad \xi_1\xi_0 =0,
\\
\xi_2(1\tens\beps) =-1_{C[-1]}, \qquad \xi_2(\beps\tens1) =1_{C[-1]}, \qquad \xi_1\beps =0, \qquad \sfw \cdot \eps =1_\1.
\end{gather*}
These relations except the last one arise in the theory of \ainf-coalgebras, \textit{e.g.} \cite[Section~1.8]{Lyu-curved-coalgebras}.
In fact, the first four equations can be combined into
\[ \sum_{r+k+t=n} \xi_{r+1+t}(1^{\tens r}\tens\xi_k\tens1^{\tens t}) =0: C[-1]\to C[-1]^{\tens n}, \qquad \forall\, n\ge0.
\]

A morphism of (counit-complemented) curved coalgebras in shifted version is given by \(g_1:C[-1]\to D[-1]\in\cv\) and \(g_0:C[-1]\to\1\in\cv\) such that
\begin{align*}
\sfg_1 &=\bigl( C \rTTo^{\sigma^{-1}} C[-1] \rTTo^{g_1} D[-1] \rTTo^\sigma D \bigr),
\\
\sfg_0 &=\bigl( C \rTTo^{\sigma^{-1}} C[-1] \rTTo^{g_0} \1 \bigr).
\end{align*}
The equations satisfied by shifted data of a morphism are
\begin{gather*}
\xi^C_2(g_1\tens g_1) =g_1\xi^D_2, \qquad \xi^C_1g_1 +\xi^C_2(g_0\tens g_1 +g_1\tens g_0) =g_1\xi^D_1,
\\
\xi^C_0 +\xi^C_1g_0 +\xi^C_2(g_0\tens g_0) =g_1\xi^D_0, \qquad g_1\beps^D =\beps^C, \qquad h_0 =f_0+f_1g_0.
\end{gather*}
The first three equations can be combined into
\begin{multline}
\sum_{r+k+t=n} g_{r+1+t}(1^{\tens r}\tens\xi_k\tens1^{\tens t}) =\sum_{i_1+\dots+i_k=n} \xi_k(g_{i_1}\tens g_{i_2}\tdt g_{i_k}):
\ifx\chooseClass1
\\
\fi
C[-1]\to D[-1]^{\tens n}.
\label{eq-g1xi1-xiggg}
\end{multline}

\subsection{Curved homotopy coalgebras}
Category $\chCoalg$ of \emph{curved homotopy coalgebras} consists of homotopy coalgebras $X\tilde{T}$ equipped with extra data.
Objects of the category $\chCoalg$ by definition are objects $X\in\Ob\cv$ equipped with a degree~1 coderivation \(\delta_1:XT\to XT\) coming from a degree~1 map \(\check\delta_1=\delta_1\pr_1:XT\to X\) with finite support as in \remref{rem-xi(1)-coderivation} and a degree~2 functional \(\delta_0:XT\to\1\) with finite support such that \((XT,\delta_2,\delta_1,\delta_0,\pr_0,\inj_0)\) is a counit-\hspace{0pt}complemented curved coalgebra for the cut comultiplication $\delta_2$, counit \(\eps=\pr_0\) and the splitting \(\sfw=\inj_0\).
In terms of components \(\delta_{1;m}:X^{\tens m}\to X\), \(\deg\delta_{1;m}=1\), and \(\delta_{0;m}:X^{\tens m}\to\1\), \(\deg\delta_{0;m}=2\), $m\in\NN$, for each $n\in\NN$ we require that
\begin{align}
\sum_{a+k+c=n} (1^{\tens a}\tens\delta_{1;k}\tens1^{\tens c})\delta_{1;a+1+c} &=1\tens\delta_{0;n-1} -\delta_{0;n-1}\tens1: X^{\tens n}\to X,
\label{eq-1delta1-delta-1delta-delta1-XX}
\\
\sum_{a+k+c=n} (1^{\tens a}\tens\delta_{1;k}\tens1^{\tens c})\delta_{0;a+1+c} &=0: X^{\tens n} \to \1.
\label{eq-1delta1-delta-X1}
\end{align}
The coderivations $\delta_1$ and \(\theta=\delta_2(1\tens\delta_0-\delta_0\tens1):XT\to XT\) extend to coderivations of homotopy coalgebras \(\tilde\delta_1:X\tilde{T}\to X\tilde{T}\) and \(\tilde\theta:X\tilde{T}\to X\tilde{T}\) coming from \(\check\delta_1=\delta_1\pr_1\) and \(\check\theta=\delta_2(1\tens\delta_0-\delta_0\tens1)\pr_1:XT\to X\) as in \propref{pro-fgxi-XT-Y} and \exaref{exa-(fg)-coderivations-exi-xie}.
It follows from \secref{Components-coderivations-lax-coalgebras} that \(\tilde\delta_1^2=\tilde\theta\) and \(\tilde\delta_1(1)\tilde\delta_0=0:X\hat{T}\to\1\), where \(\tilde\delta_0:X\hat{T}\to\1\) extends \(\delta_0:XT\to\1\) with finite support by zeroes, see \eqref{eq-MM-phi-N}.

A \emph{morphism of curved homotopy coalgebras} \(\sfg:(X,\delta_1,\delta_0)\to(Y,\delta_1,\delta_0)\) is a pair \((\sfg_1,\sfg_0)\) consisting of a homotopy coalgebra morphism \(\sfg_1:X\tilde{T}\to Y\tilde{T}\in\fhCoalg\) coming from a morphism \(\check\sfg_1:XT\to Y\) with finite support as in \propref{pro-f-XT-Y-defines-f} and a degree~1 functional \(\sfg_0:XT\to\1\) with finite support such that
\begin{align}
\tilde\delta_1\sfg_1 +\wt{[\delta_2(\sfg_0\tens1 -1\tens\sfg_0)]}\sfg_1 &=\sfg_1\tilde\delta_1: X\tilde{T} \to Y\tilde{T},
\label{eq-del1g1}
\\
\tilde\delta_0 -\tilde\delta_1\tilde\sfg_0 -\wt{[\delta_2(\sfg_0\tens\sfg_0)]} &=\sfg_1(1)\tilde\delta_0: X\hat{T} \to \1.
\label{eq-delta0-delta1gee0}
\end{align}
In the first equation all three terms are $\sfg_1$\n-coderivations.
The coderivation in brackets \(\wt{[\delta_2(\sfg_0\tens1 -1\tens\sfg_0)]}:X\tilde{T}\to X\tilde{T}\) comes from the map \(\delta_2(\sfg_0\tens1 -1\tens\sfg_0)\pr_1:XT\to X\) as in \propref{pro-fgxi-XT-Y}.
By \secref{Components-coderivations-lax-coalgebras} these equations can be written in components:
\begin{align*}
&\sum_{a+k+c=n} (1^{\tens a}\tens\delta_{1;k}\tens1^{\tens c})\sfg_{1,a+1+c} +\sum_{m=0}^{n-1} (\sfg_{0,m}\tens1^{\tens(n-m)})\sfg_{1,1+n-m}
\\
&-\sum_{m=0}^{n-1} (1^{\tens(n-m)}\tens\sfg_{0,m})\sfg_{1,n-m+1} =\hspace*{-.6em}\sum_{k_1+\dots+k_l=n}\hspace*{-.6em} (\sfg_{1,k_1}\tdt \sfg_{1,k_l})\delta_{1;l}: X^{\tens n} \to Y,
\\
&\delta_{0;n} -\sum_{a+k+c=n} (1^{\tens a}\tens\delta_{1;k}\tens1^{\tens c}) \sfg_{0,a+1+c} -\sum_{a+c=n}\sfg_{0,a}\tens\sfg_{0,c}
\\
&\hphantom{-\sum (1^{\tens(n-m)}\tens\sfg_{0,m})\sfg_{1,n-m+1}} =\sum_{k_1+\dots+k_l=n} (\sfg_{1,k_1}\tdt \sfg_{1,k_l})\delta_{0;l}: X^{\tens n} \to \1.
\end{align*}
Therefore, equations \eqref{eq-del1g1}, \eqref{eq-delta0-delta1gee0} can be presented in equivalent form
\begin{align}
\delta_1\check\sfg_1 +\delta_2(\sfg_0\tens1 -1\tens\sfg_0)\check\sfg_1 &=e(1)\sfg_1(1)\tilde\delta_1\pr_1: XT \to Y,
\label{eq-del1e(1)g1(1)}
\\
\delta_0 -\delta_1\sfg_0 -\delta_2(\sfg_0\tens\sfg_0) &=e(1)\sfg_1(1)\tilde\delta_0: XT \to \1.
\label{eq-delta0-delta1gee0-e(1)g1(1)}
\end{align}
Recovering $\sfg_1(1)$ from $\check\sfg_1$ via \eqref{eq-f(1)prI-XX-Delta-rest-XY} we get an equivalent form of these equations
\begin{align}
\delta_1\check\sfg_1 +\delta_2(\sfg_0\tens1 -1\tens\sfg_0)\check\sfg_1 &=\bigl(XT \rto{\hat\Delta} XT\hat{T} \rTTo^{\check\sfg_1\hat{T}} Y\hat{T} \rto{\check\delta_1} Y\bigr) \notag
\\
&\equiv \sum_{k\in\NN} \Bigl(XT \rto{\Delta_k} XT^{\tens k} \rTTo^{\check\sfg_1^{\tens k}} Y^{\tens k} \rto{\delta_{1;k}} Y\Bigr),
\label{eq-XT-XT-Y-Y}
\\
\delta_0 -\delta_1\sfg_0 -\delta_2(\sfg_0\tens\sfg_0) &=\bigl(XT \rto{\hat\Delta} XT\hat{T} \rTTo^{\check\sfg_1\hat{T}} Y\hat{T} \rto{\tilde\delta_0} \1\bigr) \notag
\\
&\equiv \sum_{k\in\NN} \Bigl(XT \rto{\Delta_k} XT^{\tens k} \rTTo^{\check\sfg_1^{\tens k}} Y^{\tens k} \rto{\delta_{0;k}} \1\Bigr).
\label{eq-XT-XT-Y-1}
\end{align}

The composition \((\sfh_1,\sfh_0)\) of morphisms \((\sff_1,\sff_0)\) and \((\sfg_1,\sfg_0)\) is given by \(\sfh_1=\sff_1\sfg_1\in\fhCoalg\) (see \eqref{eq-asdfghjkl}) and \(\sfh_0=\sff_0+e(1)\sff_1(1)\tilde\sfg_0\).
Note that \(XT\rTTo^{e(1)} X\hat{T}\rTTo^{\sff_1(1)}\) \(Y\hat{T}\rto{\tilde\sfg_0} \1\) has finite support due to \eqref{eq-inm-f-prn-sum}.
Equation~\eqref{eq-del1g1} for $\sfh$ is proven as follows:
\begin{multline}
\sfh_1\tilde\delta_1 =\sff_1\sfg_1\tilde\delta_1 =\sff_1\tilde\delta_1\sfg_1 +\sff_1\wt{[\delta_2(\sfg_0\tens1 -1\tens\sfg_0)]}\sfg_1
\\
=\tilde\delta_1\sff_1\sfg_1 +\wt{[\delta_2(\sff_0\tens1 -1\tens\sff_0)]}\sff_1\sfg_1 +\wt{[\delta_2(e(1)\sff_1(1)\sfg_0\tens1 -1\tens e(1)\sff_1(1)\sfg_0)]}\sff_1\sfg_1
\\
=\tilde\delta_1\sfh_1 +\wt{[\delta_2(\sfh_0\tens1 -1\tens\sfh_0)]}\sfh_1,
\label{eq-h1delta1-delta1h1-delta2}
\end{multline}
since the following two $\sff_1$\n-coderivations are equal
\[ \sff_1\wt{[\delta_2(\sfg_0\tens1 -1\tens\sfg_0)]} =\wt{[\delta_2(e(1)\sff_1(1)\sfg_0\tens1 -1\tens e(1)\sff_1(1)\sfg_0)]}\sff_1: X\tilde{T} \to Y\tilde{T}.
\]
In fact, by \secref{Components-coderivations-lax-coalgebras} both sides come from the same map, $n\in\NN$:
\[ \sum_{k_1+\dots+k_l=n} (\sff_{1,k_1}\tdt\sff_{1,k_l}) (\sfg_{0,l-1}\tens1 -1\tens\sfg_{0,l-1}): X^{\tens n} \to Y.
\]

In order to prove equation~\eqref{eq-delta0-delta1gee0-e(1)g1(1)} for $\sfh$ use equations \eqref{eq-del1g1}--\eqref{eq-delta0-delta1gee0-e(1)g1(1)} for $\sff$ and $\sfg$ and the observation that all factors below have finite support:
\begin{multline}
e(1)\sfh_1(1)\tilde\delta_0 =e(1)\sff_1(1)\sfg_1(1)\tilde\delta_0 =e(1)\sff_1(1) \bigl( \tilde\delta_0 -\tilde\delta_1\tilde\sfg_0 -\wt{[\delta_2(\sfg_0\tens\sfg_0)]} \bigr)
\\
=\delta_0 -\delta_1\sff_0 -\delta_2(\sff_0\tens\sff_0) -[\delta_1e(1)\sff_1(1) +\delta_2(\sff_0\tens1 -1\tens\sff_0)e(1)\sff_1(1)]\tilde\sfg_0
\ifx\chooseClass1
\\ \hfill
\fi
-\delta_2(e(1)\sff_1(1)\tilde\sfg_0\tens e(1)\sff_1(1)\tilde\sfg_0)
\\
=\delta_0 -\delta_1(\sff_0+e(1)\sff_1(1)\tilde\sfg_0) -\delta_2[(\sff_0+e(1)\sff_1(1)\tilde\sfg_0)\tens(\sff_0+e(1)\sff_1(1)\tilde\sfg_0)]
\ifx\chooseClass1
\\
\fi
=\delta_0 -\delta_1\sfh_0 -\delta_2(\sfh_0\tens\sfh_0).
\label{eq-e(1)h(1)delta0-delta0}
\end{multline}

The identity morphisms for so defined composition are \((\id,0)\).
The category $\chCoalg$ is described in full.

\subsection{Bimodule over categories of curved coalgebras}
Let us construct a $\cCoalg$-$\chCoalg$-bimodule, a functor \(\CCoalg:\cCoalg^\op\times\chCoalg\to\Set\).
Let $C$ be a curved coalgebra and $X\tilde{T}$ be a curved homotopy coalgebra.
Define \(\CCoalg(C,X\tilde{T})\) as the set of pairs \((\sfg_1,\sfg_0)\) consisting of a homotopy coalgebra morphism \(\sfg_1:C\to X\tilde{T}\) and a degree~1 functional \(\sfg_0:C\to\1\) such that
\begin{align}
\delta^C_1\sfg_1 +\delta^C_2(\sfg_0\tens1 -1\tens\sfg_0)\sfg_1 &=\sfg_1\tilde\delta_1: C \to X\tilde{T},
\label{eq-g1delta1-CXT}
\\
\delta^C_0 -\delta^C_1\sfg_0 -\delta^C_2(\sfg_0\tens\sfg_0) &=\sfg_1(1)\tilde\delta_0: C \to \1.
\label{eq-g1delta0-C1}
\end{align}
The left action of $\cCoalg$ is the precomposition: for any \(\sff=(\sff_1,\sff_0):C\to D\in\cCoalg\) there is a map
\begin{align}
\CCoalg(\sff,X\tilde{T}): \CCoalg(D,X\tilde{T}) &\longrightarrow \CCoalg(C,X\tilde{T}),
\label{eq-CCoalg(DXT)-CCoalg(CXT)}
\\
(\sfg_1,\sfg_0) &\longmapsto (\sff_1,\sff_0) \cdot (\sfg_1,\sfg_0) =(\sff_1\sfg_1,\sff_0+\sff_1\sfg_0). \notag
\end{align}
The fact that the last pair satisfies \eqref{eq-g1delta1-CXT} and \eqref{eq-g1delta0-C1} is verified similarly to \eqref{eq-h1delta1-delta1h1-delta2} and \eqref{eq-e(1)h(1)delta0-delta0}.
The right action of $\chCoalg$ is the postcomposition: for any \(\sfh=(\sfh_1,\sfh_0):X\tilde{T}\to Y\tilde{T}\in\chCoalg\) there is a map
\begin{align}
\CCoalg(C,\sfh): \CCoalg(C,X\tilde{T}) &\longrightarrow \CCoalg(C,Y\tilde{T}),
\label{eq-CCoalg(CXT)-CCoalg(CYT)}
\\
(\sfg_1,\sfg_0) &\longmapsto (\sfg_1,\sfg_0) \cdot (\sfh_1,\sfh_0) =(\sfg_1\sfh_1,\sfg_0+\sfg_1(1)\tilde\sfh_0). \notag
\end{align}
The fact that the last pair satisfies \eqref{eq-g1delta1-CXT} and \eqref{eq-g1delta0-C1} is verified similarly to \eqref{eq-h1delta1-delta1h1-delta2} and \eqref{eq-e(1)h(1)delta0-delta0}.
Associativity of the actions ($\CCoalg$ being a functor) is obvious from the given formulae.

I doubt that it makes sense to consider also morphisms \(X\tilde{T}\to C\).
They are not needed for this paper anyway.

Due to Propositions \ref{pro-lCoalg(CXT)-V(CX)}, \ref{pro-(fg)coder(CXT)-V(CX)} the set \(\CCoalg(C,X\tilde{T})\) is in bijection with the set of pairs \((\check\sfg_1,\sfg_0)\in\cv(C,X)\times\und\cv(C,\1)^1\) such that
\begin{align}
\delta^C_1\check\sfg_1 +\delta^C_2(\sfg_0\tens1 -1\tens\sfg_0)\check\sfg_1 &=\bigl(C \rto{\hat\Delta} C\hat{T} \rTTo^{\check\sfg_1\hat{T}} X\hat{T} \rto{\check\delta_1} X\bigr) \notag
\\
&\equiv \sum_{k\in\NN} \Bigl(C \rto{\Delta_k} C^{\tens k} \rTTo^{\check\sfg_1^{\tens k}} X^{\tens k} \rto{\delta_{1;k}} X\Bigr),
\label{eq-CCXX-C-XT}
\\
\delta^C_0 -\delta^C_1\sfg_0 -\delta^C_2(\sfg_0\tens\sfg_0) &=\bigl(C \rto{\hat\Delta} C\hat{T} \rTTo^{\check\sfg_1\hat{T}} X\hat{T} \rto{\tilde\delta_0} \1\bigr) \notag
\\
&\equiv \sum_{k\in\NN} \Bigl(C \rto{\Delta_k} C^{\tens k} \rTTo^{\check\sfg_1^{\tens k}} X^{\tens k} \rto{\delta_{0;k}} \1\Bigr).
\label{eq-CCX1-C-XT}
\end{align}
In fact, all three summands of \eqref{eq-g1delta1-CXT} are $\sfg_1$\n-coderivations and $\sfg_1$ is recovered from $\check\sfg_1$ via \eqref{eq-t(1)-C-XT}.

\section{Bar and cobar constructions}
We describe cobar- and bar-constructions.

\subsection{Cobar-construction}
Let us construct a functor \(\Cobar:\ucCoalg\to\ucAlg\), the cobar-\hspace{0pt}construction.
Let \(C=(C,\xi_2,\xi_1,\xi_0,\beps,\bw)\) be a counit-complemented curved coalgebra.
Decompose the idempotent \(1-\eps\sfw:C\to C\) into a projection \(\opr:C\to\bar{C}\) and an injection \(\oin:\bar{C}\to C\) so that \(\oin\opr=1_{\bar{C}}\).
Clearly, \(\bar{C}=\Ker\eps\).
The morphisms \(\sigma\opr\sigma^{-1}:C[-1]\to\bar{C}[-1]\) and \(\sigma\oin\sigma^{-1}:\bar{C}[-1]\to C[-1]\) are also denoted $\opr$ and $\oin$ by abuse of notations.
They split the idempotent \(1-\sigma\eps\sfw\sigma^{-1}=1-\beps\cdot\bw:C[-1]\to C[-1]\) and \(\bar{C}[-1]=\Ker\beps\).

\begin{proposition}
Define \(\Cobar C\) as \(\bar{C}[-1]T\) equipped with the multiplication \(m^{\Cobar C}_2\) in the tensor algebra, the unit
\(\eta^{\Cobar C}=\inj_0:\1=\bar{C}[-1]T^0\hookrightarrow\bar{C}[-1]T\), the splitting \(\sfv^{\Cobar C}=\pr_0:\bar{C}[-1]T\to\bar{C}[-1]T^0=\1\), the degree~1 derivation \(m^{\Cobar C}_1=\bar\xi:\bar{C}[-1]T\to\bar{C}[-1]T\) given by its components
\[ \bar\xi_n =\bigl( \bar{C}[-1] \rMono^\oin C[-1] \rTTo^{\xi_n} C[-1]^{\tens n} \rTTo^{\opr^{\tens n}} \bar{C}[-1]^{\tens n} \bigr), \qquad n=0,1,2,
\]
and a degree~2 element
\begin{equation}
m^{\Cobar C}_0 =-\Bigl(\bw\tens\bw +\sum_{n=0}^2\bw\xi_n\Bigr) (\opr T)\in (\bar{C}[-1]T)^2.
\label{eq-mCobarC0-CT}
\end{equation}
Then \(\Cobar C\) is a unit-complemented curved algebra.
\end{proposition}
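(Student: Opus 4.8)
The plan is to verify the axioms of a unit-complemented curved algebra in the order: the algebra, unit and splitting data (routine), then $m_1^2=(m_0\tens1-1\tens m_0)m_2$, and finally $m_0m_1=0$. Throughout I write $V=\bar C[-1]$ and use $\oin\opr=1_V$ together with $\opr\oin=1-\beps\bw$ on $C[-1]$ and the identity $\bw\opr=0$, which follows from $\bw\beps=1_\1$ and $V=\Ker\beps$ since $\bw(\opr\oin)=\bw(1-\beps\bw)=\bw-(\bw\beps)\bw=0$ and $\oin$ is a monomorphism. First I would dispose of the routine facts: $(VT,m_2^{\Cobar C},\inj_0)$ is the tensor algebra, hence associative and unital; $\inj_0\pr_0=\id_\1$ is the splitting $\eta^{\Cobar C}\sfv^{\Cobar C}=\id_\1$; $m_1^{\Cobar C}=\bar\xi$ is by construction the unique degree-$1$ derivation of $VT$ extending $\bar\xi=\bar\xi_0+\bar\xi_1+\bar\xi_2\colon V\to VT$; and $m_0^{\Cobar C}$ is a degree-$2$ element. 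Because $\bw\opr=0$, the summand $\bw\tens\bw$ in \eqref{eq-mCobarC0-CT} is annihilated by $\opr T$, so $m_0^{\Cobar C}$ has homogeneous components $m_0^{(n)}=-\bw\xi_n\opr^{\tens n}$ for $n=0,1,2$ (with $m_0^{(0)}=-\bw\xi_0\in\1$).

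For $m_1^2=(m_0\tens1-1\tens m_0)m_2$ I would first note that $m_0$ has even degree, so the right-hand side is the inner derivation $\operatorname{ad}_{m_0}$, while $m_1^2=\tfrac12[m_1,m_1]$ is likewise an even derivation of the free algebra $VT$; hence it suffices to compare the two maps on the generating space $V$. Applying the graded Leibniz rule to $v.\bar\xi$ and sorting the output by tensor length, $v.m_1^2$ has length-$0,1,2,3$ components $\bar\xi_1\bar\xi_0$; $\bar\xi_1\bar\xi_1+\bar\xi_2(\bar\xi_0\tens1)+\bar\xi_2(1\tens\bar\xi_0)$; $\bar\xi_1\bar\xi_2+\bar\xi_2(\bar\xi_1\tens1)+\bar\xi_2(1\tens\bar\xi_1)$; and $\bar\xi_2(\bar\xi_2\tens1)+\bar\xi_2(1\tens\bar\xi_2)$. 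On the other side $\operatorname{ad}_{m_0}$ contributes nothing in lengths $0$ and $1$ (its length-$1$ term is $m_0^{(0)}v-vm_0^{(0)}=0$, the scalar $m_0^{(0)}$ being central) and equals $v\mapsto m_0^{(n-1)}\tens v-v\tens m_0^{(n-1)}$ in lengths $n=2,3$.

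The heart of the argument would be to match these length by length. Substituting $\bar\xi_n=\oin\xi_n\opr^{\tens n}$ and replacing the unique internal junction $\opr\oin$ by $1-\beps\bw$ splits every two-fold composite into a \emph{principal} part (with $\opr\oin$ replaced by $1$) and a \emph{correction} part carrying one factor $-\beps\bw$. For each fixed length $n$ the principal parts sum to $\oin\bigl[\sum_{r+k+t=n}\xi_{r+1+t}(1^{\tens r}\tens\xi_k\tens1^{\tens t})\bigr]\opr^{\tens n}$, which vanishes by the curved-coalgebra relations of $C$ for $n=0,1,2,3$, namely $\xi_1\xi_0=0$, then $\xi_1^2+\xi_2(\xi_0\tens1+1\tens\xi_0)=0$, then $\xi_1\xi_2+\xi_2(\xi_1\tens1+1\tens\xi_1)=0$, and coassociativity. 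In a correction part the $\beps$ sits on the output of the first operation: if that operation is $\xi_1$ the correction dies by $\xi_1\beps=0$, whereas if it is $\xi_2$ one strand collapses through $\xi_2(\beps\tens1)=1_{C[-1]}$ or $\xi_2(1\tens\beps)=-1_{C[-1]}$, leaving the factor $-\bw\xi_k\opr^{\tens k}=m_0^{(k)}$ on one strand and $\oin\opr=1_V$ on the surviving one. In lengths $2$ and $3$ this yields exactly $m_0^{(1)}\tens v-v\tens m_0^{(1)}$ and $m_0^{(2)}\tens v-v\tens m_0^{(2)}$, while in length $1$ the two corrections give $m_0^{(0)}v-vm_0^{(0)}=0$. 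Matching the two signs $\xi_2(\beps\tens1)=1$ and $\xi_2(1\tens\beps)=-1$ against the two terms of $\operatorname{ad}_{m_0}$ and tracking the Koszul signs produced by the Leibniz rule across all four lengths is the main obstacle; the rest is bookkeeping.

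Finally, for $m_0m_1=0$ I would compute $m_0.\bar\xi$ directly on the three components of $m_0$. Its length-$0$ part is $m_0^{(1)}.\bar\xi_0=-\bw\xi_1(\opr\oin)\xi_0=-\bw\xi_1(1-\beps\bw)\xi_0$, which vanishes by $\xi_1\xi_0=0$ and $\xi_1\beps=0$; the length-$1$ part combines $m_0^{(1)}.\bar\xi_1$ with $m_0^{(2)}.(\bar\xi_0\tens1+1\tens\bar\xi_0)$ and vanishes by the $n=1$ relation together with $\xi_1\beps=0$ and $\bw\opr=0$; and the higher components vanish in the same way via the splitting identities. This would establish all the axioms, so $\Cobar C$ is a unit-complemented curved algebra.
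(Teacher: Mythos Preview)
Your proposal is correct and follows essentially the same route as the paper: reduce both identities to generators via the derivation property, then verify length by length using the splitting $\opr\oin=1-\beps\bw$. The paper carries out the same computation case-by-case after first recording the explicit expansion $\bar\xi_2=\xi_2+1\tens\bw-\bw\tens1-\beps(\bw\tens\bw)$ (your ``principal plus correction'' split is exactly this expansion, applied systematically). One small point: the paper keeps the redundant summand $\bw\tens\bw$ in $m_0$ because $\bw\tens\bw+\bw\xi_2$ already lies in $\bar C[-1]^{\tens2}$ without applying $\opr^{\tens2}$, which streamlines the $n=3$ case; your representation $m_0^{(2)}=-\bw\xi_2\opr^{\tens2}$ is the same element and your correction bookkeeping reaches the same answer. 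Your treatment of $m_0m_1=0$ in lengths $\ge2$ is sketchy compared to the paper's explicit check (where a $\bw\tens\bw\tens\bw$ cancellation appears at $n=3$), but the method you indicate does go through.
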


\begin{proof}
If $n=0,1$, then \(\bar\xi_n\oin^{\tens n}=\oin\xi_n:\bar{C}[-1]\to C[-1]^{\tens n}\).
Furthermore,
\[ \bar\xi_2\oin^{\tens2} =\oin\xi_2 [(1-\beps\bw)\tens(1-\beps\bw)].
\]
By abuse of notation the morphism
\begin{equation}
\xi_2[(1-\beps\bw)\tens(1-\beps\bw)] =\xi_2 +1\tens\bw -\bw\tens1 -\beps(\bw\tens\bw): C[-1] \to C[-1]^{\tens2}.
\label{eq-xi2-pr-pr}
\end{equation}
is also denoted $\bar\xi_2$.
Actually, projections in expressions
\[ (m^{\Cobar C}_0)_2 =-(\bw\tens\bw +\bw\xi_2)\opr^{\tens2} =-\bw\xi_2\opr^{\tens2}, \quad (m^{\Cobar C}_0)_k =-\bw\xi_k\opr^{\tens k}
\]
if $k=0,1$, may be informally omitted since
\begin{gather*}
(\bw\tens\bw +\bw\xi_2)\opr^{\tens2}\oin^{\tens2} =(\bw\tens\bw +\bw\xi_2)[(1-\beps\bw)\tens(1-\beps\bw)] =\bw\tens\bw +\bw\xi_2,
\\
\bw\xi_1\opr\oin =\bw\xi_1(1-\beps\bw) =\bw\xi_1,
\end{gather*}
see \eqref{eq-xi2-pr-pr}.
Thus, \(\bw\tens\bw+\bw\xi_2\) belongs to the subset \((\bar{C}[-1]^{\tens2})^2\hookrightarrow(C[-1]^{\tens2})^2\) and $\bw\xi_1$ belongs to the subset \(\bar{C}[-1]^2\hookrightarrow C[-1]^2\).

Both sides of the equation
\[ (m^{\Cobar C}_1)^2 =(m^{\Cobar C}_0\tens1 -1\tens m^{\Cobar C}_0) m^{\Cobar C}_2
\]
are derivations.
It is equivalent to its restriction to generators $\bar{C}[-1]$:
\begin{multline}
\sum_{r+k+t=n}\bar\xi_{r+1+t}(1^{\tens r}\tens\bar\xi_k\tens1^{\tens t}) =(m^{\Cobar C}_0)_{n-1}\tens1 -1\tens(m^{\Cobar C}_0)_{n-1}:
\\
\bar C[-1]\to\bar C[-1]^{\tens n}.
\label{eq-xi-xi-m0-m0}
\end{multline}
In fact, \eqref{eq-xi-xi-m0-m0} is obvious for $n=0$.
It says for $n=1$ that
\[ \xi_1^2 +\bar\xi_2(1\tens\xi_0 +\xi_0\tens1) =(1\tens\bw -\bw\tens1) (1\tens\xi_0 +\xi_0\tens1) =(\bw\xi_0) -\xi_0\bw +\xi_0\bw -(\bw\xi_0) =0
\]
as it has to be.
If $n=2$ or $n\ge4$, then the left hand side of \eqref{eq-xi-xi-m0-m0} is
\begin{align*}
&\xi_1\xi_n +\bar\xi_2(\xi_{n-1}\tens1 +1\tens\xi_{n-1}) +\dots+\xi_{n-1}\sum_{r+2+t=n}1^{\tens r}\tens\bar\xi_2\tens1^{\tens t} +\dots
\\
&=(1\tens\bw -\bw\tens1)(\xi_{n-1}\tens1 +1\tens\xi_{n-1})
\\
&\hspace*{7em} +\xi_{n-1}\sum_{r+2+t=n}(1^{\tens(r+1)}\tens\bw\tens1^{\tens t} -1^{\tens r}\tens\bw\tens1^{\tens(1+t)})
\\
&=-\xi_{n-1}\tens\bw +1\tens\bw\xi_{n-1} -\bw\xi_{n-1}\tens1 -\bw\tens\xi_{n-1}
\\
&\hspace*{16em} +\xi_{n-1}(1^{\tens(n-1)}\tens\bw -\bw\tens1^{\tens(n-1)})
\\
&=1\tens\bw\xi_{n-1} -\bw\xi_{n-1}\tens1,
\end{align*}
as claimed.
If $n=3$, then the left hand side of \eqref{eq-xi-xi-m0-m0} is
\begin{align*}
&\xi_1\xi_3 +\bar\xi_2(\bar\xi_2\tens1 +1\tens\bar\xi_2) +\dots
\\
&=(\xi_2 +1\tens\bw -\bw\tens1)[(\xi_2 -\bw\tens1)\tens1 +1\tens(\xi_2 +1\tens\bw)] -\xi_2(\xi_2\tens1 +1\tens\xi_2)
\\
&=1\tens(\bw\xi_2 +\bw\tens\bw) -(\bw\xi_2 +\bw\tens\bw)\tens1,
\end{align*}
as claimed.

The $n$\n-th component of the expression \(m^{\Cobar C}_0m^{\Cobar C}_1\) is
\begin{align}
&m^{\Cobar C}_0m^{\Cobar C}_1\pr_n \notag
\\
&=-\bw\tens\bw\bar\xi_{n-1} +\bw\bar\xi_{n-1}\tens\bw -\sum_{r+k+t=n}\bw\xi_{r+1+t}(1^{\tens r}\tens\bar\xi_k\tens1^{\tens t}) \notag
\\
&=-\bw\tens\bw\bar\xi_{n-1} +\bw\bar\xi_{n-1}\tens\bw \notag
\\
&\hspace*{4em} -\bw\xi_{n-1}\sum_{r+2+t=n}(1^{\tens(r+1)}\tens\bw\tens1^{\tens t} -1^{\tens r}\tens\bw\tens1^{\tens(1+t)}) \notag
\\
&=-\bw\tens\bw\bar\xi_{n-1} +\bw\bar\xi_{n-1}\tens\bw -\bw\xi_{n-1}(1^{\tens(n-1)}\tens\bw -\bw\tens1^{\tens(n-1)}).
\label{eq-m0m1-wwwwwwwwwwww}
\end{align}
If $n\ne3$, then \(\bar\xi_{n-1}=\xi_{n-1}\) and the obtained expression equals
\[ -\bw\tens\bw\xi_{n-1} +\bw\xi_{n-1}\tens\bw -\bw\xi_{n-1}\tens\bw +\bw\tens\bw\xi_{n-1} =0.
\]
If $n=3$, then \eqref{eq-m0m1-wwwwwwwwwwww} equals
\[ -\bw\tens[\bw(1\tens\bw -\bw\tens1)] +[\bw(1\tens\bw -\bw\tens1)]\tens\bw =\bw\tens\bw\tens\bw(-1-1+1+1) =0.
\]
Thus \(m^{\Cobar C}_0m^{\Cobar C}_1=0\).
We obtain a map \(\Ob\ucCoalg\to\Ob\ucAlg\).
\QED\end{proof}

\begin{proposition}
The map \(\Ob\ucCoalg\to\Ob\ucAlg\) extends to a functor \(\Cobar:\ucCoalg\to\ucAlg\), taking a morphism \(g=(g_1,g_0):C\to D\) to the morphism
\[ \sfCobar g =\sff =(\sff_1,\sff_0): \bar{C}[-1]T \to \bar{D}[-1]T,
\]
where the algebra homomorphism \(\sfCobar_1g=\sff_1=\bar{g}\) is specified by its components
\begin{equation*}
\begin{split}
\bar g_1 &=\bigl( \bar{C}[-1] \rMono^\oin C[-1] \rTTo^{g_1} D[-1] \rTTo^\opr \bar{D}[-1] \bigr),
\\
\bar g_0 &=\bigl( \bar{C}[-1] \rMono^\oin C[-1] \rTTo^{g_0} \1 \bigr),
%\label{eq-g1g0g0}
\end{split}
\end{equation*}
and the degree~1 element is
\begin{equation*}
\sfCobar_0g =\sff_0 =\bigl( \1 \rTTo^\bw C[-1] \rTTo^{g_0+g_1} D[-1]T \rTTo^{\opr T} \bar{D}[-1]T \bigr).
\end{equation*}
\end{proposition}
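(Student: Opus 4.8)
The plan is to verify, first, that $\sff_1=\bar g$ is a well-defined unital algebra homomorphism and, second, that the pair $(\sff_1,\sff_0)$ satisfies the two defining equations~\eqref{eq-f1m1-f1ff1m-m1f1} of a morphism of curved algebras; functoriality then follows by inspection of the explicit formulae. Since $\bar C[-1]T$ is the tensor algebra on $\bar C[-1]$, the degree~$0$ generator map $\bar g_0+\bar g_1:\bar C[-1]\to\bar D[-1]T$ (with $\bar g_0$ valued in weight~$0$ and $\bar g_1$ in weight~$1$) extends uniquely to a unital algebra homomorphism $\sff_1$, which automatically preserves the unit $\inj_0$ and hence the splitting $\pr_0$. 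Thus the whole content lies in equations~\eqref{eq-f1m1-f1ff1m-m1f1}.

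For the first equation $\sff_1m_1+\sff_1(1\tens\sff_0-\sff_0\tens1)m_2=m_1\sff_1$ (with $m_1=\bar\xi$ and $m_2$ the concatenation product) I would observe that all three summands are $\sff_1$\n-derivations $\bar C[-1]T\to\bar D[-1]T$: the outer two because $\sff_1$ is an algebra homomorphism and each $\bar\xi$ is a derivation, the middle one because it is the inner $\sff_1$\n-derivation attached to $\sff_0$. An $\sff_1$\n-derivation out of a tensor algebra is determined by its restriction to the generators $\bar C[-1]$, so it suffices to test the equation there. Restricting to $\bar C[-1]$ and substituting $\bar g_i=\oin g_i\opr$ (resp.\ $\oin g_0$), $\bar\xi_n=\oin\xi_n\opr^{\tens n}$ and $\sff_0=\bw g_0+\bw g_1\opr$, I would reduce the identity to the combined morphism equation~\eqref{eq-g1xi1-xiggg} for $g$, precomposed with $\oin$ and postcomposed with the appropriate $\opr^{\tens n}$.

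The second equation $m_0-\sff_0m_1-(\sff_0\tens\sff_0)m_2=m_0\sff_1$ is a single identity between degree~$2$ elements of $\bar D[-1]T$; I would expand $m_0$ by~\eqref{eq-mCobarC0-CT} and $\sff_0=\bw(g_0+g_1)(\opr T)$ and reduce it, using $\bw\beps=1_\1$ and $\oin\opr=1-\beps\bw$, to the low-weight cases of~\eqref{eq-g1xi1-xiggg}. Functoriality is then immediate from the formulae: for $g=\id_C=(\id,0)$ one has $\bar g_1=\oin\opr=\id_{\bar C[-1]}$, $\bar g_0=0$ and $\sff_0=\bw\opr=0$ (since $\bw\opr\oin=\bw(1-\beps\bw)=0$ and $\oin$ is monic), whence $\sfCobar\id=(\id,0)$; and for a composite the algebra homomorphisms compose while the change-of-connection elements combine as prescribed by the composition law in $\cAlg$, matched against that in $\cCoalg$ by a direct generatorwise computation with $\opr\oin=1-\beps\bw$.

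I expect the main obstacle to be the generator computation for the first equation: the reduced cobar operations $\bar\xi_n$ differ from the coalgebra structure maps $\xi_n$ by the $\beps\bw$\n-corrections recorded in~\eqref{eq-xi2-pr-pr}, and these must cancel exactly against the $\bw$\n-terms carried by $\sff_0$ in the change-of-connection summand. This is the same delicate bookkeeping, with $n=2$ and $n=3$ again exceptional, that appears in the object-level verification~\eqref{eq-xi-xi-m0-m0}--\eqref{eq-m0m1-wwwwwwwwwwww}; tracking the signs through the shift $[-1]$ is where the real care is required.
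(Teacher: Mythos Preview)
Your proposal is correct and follows essentially the same approach as the paper: reduce the first of equations~\eqref{eq-f1m1-f1ff1m-m1f1} to generators via the $\sff_1$\n-derivation argument and invoke~\eqref{eq-g1xi1-xiggg}, expand the second equation with~\eqref{eq-mCobarC0-CT} and cancel using the morphism identity, then verify functoriality by direct computation with $\opr\oin=1-\beps\bw$. Your anticipation that the $\beps\bw$\n-corrections from $\bar\xi_2$ must cancel against the $\bw$\n-terms carried by $\sff_0$ is exactly the bookkeeping the paper carries out.
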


\begin{proof}
Let us check that $\sff$ is indeed a morphism of $\ucAlg$.
It is required that
\[ \bar g\bar\xi +(\bar g\tens\sff_0 -\sff_0\tens\bar g)m_2 =\bar\xi\bar g: \bar{C}[-1]T \to \bar{D}[-1]T.
\]
All three terms of this equation are $\bar{g}$\n-derivations.
Hence, the equation is equivalent to its restriction to $\bar{C}[-1]$:
\[ (\bar g_0+\bar g_1)\bar\xi +[(\bar g_0+\bar g_1)\tens\sff_0 -\sff_0\tens(\bar g_0+\bar g_1)]m_2 =(\bar\xi_0+\bar\xi_1+\bar\xi_2)\bar g: \bar{C}[-1] \to \bar{D}[-1]T,
\]
which means that for all $n\ge0$
\begin{multline*}
\bar g_1\bar\xi_n +(\bar g_n\tens\bw g_0 +\bar g_{n-1}\tens\bw g_1\opr -\bw g_0\tens\bar g_n -\bw g_1\opr\tens\bar g_{n-1})m_2
\\
=\sum_{i_1+\dots+i_k=n} \bar\xi_k(\bar g_{i_1}\tens\bar g_{i_2}\tdt\bar g_{i_k}): \bar C[-1] \to \bar D[-1]^{\tens n}.
\end{multline*}
Notice that \(\bar g_1\oin=\oin g_1:\bar C[-1]\to D[-1]\).
Hence, the above equation can be written as
\begin{multline*}
\oin g_1\xi_n\opr^{\tens n} +\oin(g_n\opr^{\tens n}\tens\bw g_0 +g_{n-1}\opr^{\tens(n-1)}\tens\bw g_1\opr
\\
\hfill -\bw g_0\tens g_n\opr^{\tens n} -\bw g_1\opr\tens g_{n-1}\opr^{\tens(n-1)}) \quad
\\
\hskip\multlinegap =\oin\sum_{i_1+\dots+i_k=n} \xi_k(g_{i_1}\tdt g_{i_k})\opr^{\tens n} \hfill
\\
+\oin\sum_{i_1+i_2=n} \bigl( g_{i_1}\tens\bw g_{i_2} -\bw g_{i_1}\tens g_{i_2} \bigr)\opr^{\tens n}: \bar C[-1] \to \bar D[-1]^{\tens n},
\end{multline*}
and this follows from \eqref{eq-g1xi1-xiggg}.

Another equation has to be verified:
\[ m^{\Cobar D}_0 -\sff_0\bar\xi -(\sff_0\tens\sff_0)m_2 =m^{\Cobar C}_0\bar g: \1 \to \bar{D}[-1]T.
\]
Compose it with $-\oin T$ and expand terms:
\begin{multline*}
\bw\tens\bw +\bw\check\xi +\bw g_1(1-\beps\bw)\check\xi[(1-\beps\bw)T] +\bw\tilde{g}\tens\bw\tilde{g}
\ifx\chooseClass1
\\
\fi
=\bw\tilde{g}\tens\bw\tilde{g} +\sum_{k\ge0}\bw\xi_k\tilde{g}^{\tens k}: \1 \to D[-1]T,
\end{multline*}
where \(\check\xi=\xi_0+\xi_1+\xi_2:D[-1]\to D[-1]T\) and \(\tilde{g}=\tilde{g}_0+\tilde{g}_1\overset{\text{def}}=g_0+g_1(1-\beps\bw):C[-1]\to D[-1]T\).
After cancelling two summands the equation is written as
\begin{multline*}
\delta_{n,2}\bw\tens\bw +\bw\xi_n +(\bw g_1 -\bw)\xi_n(1-\beps\bw)^{\tens n} 
\ifx\chooseClass1
\\
\fi
=\sum_{i_1+\dots+i_k=n} \bw\xi_k(g_{i_1}\tens g_{i_2}\tdt g_{i_k})(1-\beps\bw)^{\tens n}.
\end{multline*}
Cancelling $\bw g_1\xi_n(1-\beps\bw)^{\tens n}$ against the right hand side we come to the valid equation
\begin{equation*}
\delta_{n,2}\bw\tens\bw +\bw\xi_n -\bw\xi_n(1-\beps\bw)^{\tens n} =0: \1 \to D[-1]^{\tens n}.
\end{equation*}
It is obvious for $n=0,1$.
For $n=2$ we have by \eqref{eq-xi2-pr-pr}
\[ \bw\tens\bw +\bw\xi_2 -\bw\xi_2 -\bw(1\tens\bw) +\bw(\bw\tens1) +\bw\beps(\bw\tens\bw) =0.
\]

The identity morphism $g=(\id,0)$ is mapped to the identity morphism $\sfCobar g=(\id,0)$.
Let us verify that $\Cobar$ agrees with the composition.
If $h=\bigl(C\rto fD\rto gE\bigr)$ in $\ucCoalg$, \(h_1=f_1g_1\), \(h_0=f_0+f_1g_0\), then
\[ (\sfCobar_1f)\cdot\sfCobar_1g =\bar f\bar g =\bar h =\sfCobar_1h.
\]
In fact, the equation
\[ \sum_{i_1+\dots+i_k=n} \bar f_k(\bar g_{i_1}\tens\bar g_{i_2}\tdt\bar g_{i_k}) =\bar h_n: \bar C[-1] \to \bar E[-1]^{\tens n}
\]
for $n=1$ holds due to
\[ \bar f_1\bar g_1 =\oin f_1(1-\beps\bw)g_1\opr =\oin f_1g_1\opr =\oin h_1\opr =\bar{h}_1: \bar C[-1] \to \bar E[-1],
\]
and for $n=0$ it holds due to
\[ \bar f_0 +\bar f_1\bar g_0 =\oin f_0 +\oin f_1(1-\beps\bw)g_0 =\oin(f_0+f_1g_0) =\oin h_0 =\bar{h}_0: \bar C[-1] \to \1.
\]

Furthermore,
\[ \sfCobar_0g +(\sfCobar_0f) \cdot \sfCobar_1g =\sfCobar_0h
\]
composed with $\oin T$ holds true since
\[ \bw\tilde{g} +\bw\tilde{f}g[(1-\beps\bw)T]=\bw\tilde{h}: \1 \to E[-1]T.
\]
In fact, the $n$\n-th component of the left hand side is
\[ \bw\tilde{g}_n +\sum_{i_1+\dots+i_k=n} \bw\tilde{f}_k(g_{i_1}\tens g_{i_2}\tdt g_{i_k})(1-\beps\bw)^{\tens n}: \1 \to E[-1]^{\tens n},
\]
which for $n=1$ transforms to
\begin{multline*}
\bw g_1(1-\beps\bw) +\bw f_1(1-\beps\bw)g_1(1-\beps\bw) =\bw f_1g_1(1-\beps\bw) =\bw h_1(1-\beps\bw)
\ifx\chooseClass1
\\
\fi
=\bw\tilde{h}_1: \1 \to E[-1],
\end{multline*}
and for $n=0$ equals
\[ \bw g_0 +\bw f_0 +\bw f_1(1-\beps\bw)g_0 =\bw(f_0+f_1g_0) =\bw h_0 =\bw\tilde{h}_0.
\]
Hence the functor \(\Cobar:\ucCoalg\to\ucAlg\), the cobar-construction.
\QED\end{proof}

\subsection{Bar-construction}
Let us construct a functor \(\Bbar:\ucAlg\to\chCoalg\), the bar-\hspace{0pt}construction.
Let \(A=(A,b_2,b_1,b_0,\bfeta,\bv)\) be a unit-complemented curved algebra.
Decompose the idempotent \(1-\sfv\cdot\eta:A\to A\) into a projection \(\opr:A\to\bar{A}\) and an injection \(\oin:\bar{A}\to A\) so that \(\oin\opr=1_{\bar{A}}\).
The morphisms \(\sigma^{-1}\opr\sigma:A[1]\to\bar{A}[1]\) and \(\sigma^{-1}\oin\sigma:\bar{A}[1]\to A[1]\) are also denoted $\opr$ and $\oin$ by abuse of notations.
They split the idempotent \(1-\bv\cdot\bfeta:A[1]\to A[1]\).

\begin{proposition}
Define \(\Bbar A\) as the homotopy coalgebra \(\bar{A}[1]\tilde{T}\) equipped with the degree~1 coderivation \(\delta^{\Bbar A}_1=\bar{b}:\bar{A}[1]\tilde{T}\to\bar{A}[1]\tilde{T}\) given by its components
\[ \bar{b}_n =\bigl( \bar{A}[1]^{\tens n} \rTTo^{\oin^{\tens n}} A[1]^{\tens n} \rTTo^{b_n} A[1] \rTTo^\opr \bar{A}[1] \bigr), \qquad n=0,1,2,
\]
and a degree~2 functional
\[ \delta^{\Bbar A}_0 =-\bigl( \bar{A}[1]T \rTTo^{\oin T} A[1]T \rTTo^{\check{b}} A[1] \rTTo^\bv \1 \bigr).
\]
Then \(\Bbar A\) is a curved homotopy coalgebra.
\end{proposition}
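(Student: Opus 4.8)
The plan is to verify directly the two defining identities of a curved homotopy coalgebra. By construction $\delta^{\Bbar A}_1=\bar{b}$ is already a degree~1 coderivation of $\bar{A}[1]\tilde{T}$ obtained from the finite-support map $\check\delta_1=\bar{b}_0+\bar{b}_1+\bar{b}_2$ as in \propref{pro-fgxi-XT-Y} and \remref{rem-xi(1)-coderivation}, and $\delta^{\Bbar A}_0$ is a finite-support degree~2 functional. Hence the only thing left to check is that $(\bar{A}[1]T,\delta_2,\delta_1,\delta_0,\pr_0,\inj_0)$ is a counit-complemented curved coalgebra for the cut comultiplication, that is, that the component equations \eqref{eq-1delta1-delta-1delta-delta1-XX} and \eqref{eq-1delta1-delta-X1} hold with $X=\bar{A}[1]$, $\delta_{1;k}=\bar{b}_k$ and $\delta_{0;k}=-\oin^{\tens k}b_k\bv$. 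The splitting condition $\sfw\cdot\eps=\inj_0\pr_0=1_\1$ is automatic.

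First I would observe that these two identities are the mirror-dual of the two identities proven in the Cobar-proposition, namely \eqref{eq-xi-xi-m0-m0} and $m^{\Cobar C}_0m^{\Cobar C}_1=0$; the whole verification is obtained from that one by reversing arrows and exchanging $(\xi,\beps,\bw)$ with $(b,\bfeta,\bv)$. Substituting the definitions, the contracted composite $(1^{\tens a}\tens\bar{b}_k\tens1^{\tens c})\bar{b}_{a+1+c}$ carries the idempotent $\opr\oin=1-\bv\bfeta:A[1]\to A[1]$ in the slot produced by $\bar{b}_k$, and I would split it into its two summands $1$ and $-\bv\bfeta$.

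The part coming from $1$ gives, after summing over all $a+k+c=n$, the expression $\oin^{\tens n}\bigl[\sum_{r+k+t=n}(1^{\tens r}\tens b_k\tens1^{\tens t})b_{r+1+t}\bigr]\opr$, which vanishes by the curved-algebra relation $\sum_{r+k+t=n}(1^{\tens r}\tens b_k\tens1^{\tens t})b_{r+1+t}=0$ (only $b_0,b_1,b_2$ occurring). The part coming from $-\bv\bfeta$ is where the curvature appears: the factor $\bv$ turns $\oin^{\tens k}b_k$ into $-\delta_{0;k}$, while the inserted $\bfeta$ is absorbed by $b_{a+1+c}$ through the unit relations $(1\tens\bfeta)b_2=1_{A[1]}$, $(\bfeta\tens1)b_2=-1_{A[1]}$ and $\bfeta b_1=0$. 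Since $\bfeta$ has no slot in $b_0$ and $b_{a+1+c}=0$ for $a+1+c\ge3$, only the contractions with $a+1+c=2$, hence $k=n-1$, survive, and collecting them produces exactly $1\tens\delta_{0;n-1}-\delta_{0;n-1}\tens1$, which is \eqref{eq-1delta1-delta-1delta-delta1-XX}. The identical bookkeeping applied to $(1^{\tens a}\tens\bar{b}_k\tens1^{\tens c})\delta_{0;a+1+c}$, now using $\bfeta b_1=0$ (dual to $b_0b_1=0$), yields \eqref{eq-1delta1-delta-X1}; functoriality of $\Bbar$ and the compatibility of $\delta_0$ then read off from \secref{Components-coderivations-lax-coalgebras}.

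The main obstacle, exactly as in the Cobar computation, is the honest sign- and slot-bookkeeping around $b_2$: because $(\bfeta\tens1)b_2$ and $(1\tens\bfeta)b_2$ differ in sign and feed opposite legs, the two curvature terms $1\tens\delta_{0;n-1}$ and $-\delta_{0;n-1}\tens1$ emerge with the correct opposite signs only after the genuine contribution $\bar{b}_2=\oin^{\tens2}b_2\opr$ is separated from the unit-correction terms. This is the precise analogue of the special case $n=3$ handled by hand in the Cobar proof, where $\bar\xi_2$ had to be rewritten as in \eqref{eq-xi2-pr-pr}; here one rewrites $[(1-\bv\bfeta)\tens(1-\bv\bfeta)]b_2$ in the same spirit, and the degree-$2$ contractions are the only place requiring care.
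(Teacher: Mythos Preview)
Your approach is essentially the same as the paper's: split the inserted idempotent $\opr\oin=1-\bv\bfeta$, kill the ``$1$'' part by the curved-algebra relation $\sum(1^{\tens r}\tens b_k\tens1^{\tens t})b_{r+1+t}=0$, and contract the ``$-\bv\bfeta$'' part through the unit relations for $b_2$. The paper carries this out in two lines for each of the two identities.

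One correction on the second identity \eqref{eq-1delta1-delta-X1}. After the same splitting, the surviving $-\bv\bfeta$ terms with $a+1+c=2$ give
\[
\oin^{\tens n}(1\tens b_{n-1}\bv\bfeta +b_{n-1}\bv\bfeta\tens1)b_2\bv
=\oin^{\tens n}(\bv\tens b_{n-1}\bv -b_{n-1}\bv\tens\bv),
\]
and this vanishes because $\oin\bv=0$, not because of $\bfeta b_1=0$. The relation $\bfeta b_1=0$ only disposes of the residual $a+1+c=1$ term $\oin^{\tens n}b_n\bv\bfeta b_1\bv$, which is relevant only for $n\le2$. So the mechanism you name is secondary; the main cancellation is $\oin\bv=0$, exactly as the paper states. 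Also, your parenthetical ``dual to $b_0b_1=0$'' is off: $\bfeta b_1=0$ is itself one of the shifted curved-algebra axioms, not a dual of another one.
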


\begin{proof}
Equation~\eqref{eq-1delta1-delta-1delta-delta1-XX} for $n\ge0$ takes the form
\begin{multline*}
\sum_{r+k+t=n} (1^{\tens r}\tens\bar b_k\tens1^{\tens t})\bar b_{r+1+t} 
\ifx\chooseClass1
\\
\fi
=\oin^{\tens(n-1)}b_{n-1}\bv\tens1 -1\tens\oin^{\tens(n-1)}b_{n-1}\bv: \bar A[1]^{\tens n} \to \bar A[1].
\end{multline*}
This holds true due to computation
\begin{multline*}
\oin^{\tens n}\sum_{r+k+t=n} (1^{\tens r}\tens b_k(1-\bv\bfeta)\tens1^{\tens t})b_{r+1+t}\opr
\\
=-\oin^{\tens n}(1\tens b_{n-1}\bv\bfeta +b_{n-1}\bv\bfeta\tens1)b_2\opr =\oin^{\tens(n-1)}b_{n-1}\bv\tens1 -1\tens\oin^{\tens(n-1)}b_{n-1}\bv.
\end{multline*}

Furthermore, the left hand side of \eqref{eq-1delta1-delta-X1} vanishes due to
\begin{multline*}
-\oin^{\tens n}\sum_{r+k+t=n} (1^{\tens r}\tens b_k(1-\bv\bfeta)\tens1^{\tens t})b_{r+1+t}\bv
\\
=\oin^{\tens n}(1\tens b_{n-1}\bv\bfeta +b_{n-1}\bv\bfeta\tens1)b_2\bv  =\oin^{\tens n}(\bv\tens b_{n-1}\bv -b_{n-1}\bv\tens\bv) =0,
\end{multline*}
because \(\oin\bv=0\).
\QED\end{proof}

\begin{proposition}
The map \(\Ob\ucAlg\to\Ob\chCoalg\) extends to a functor \(\Bbar:\ucAlg\to\chCoalg\), taking a morphism \(f=(f_1,f_0):A\to B\) to the morphism
\[ \sfBar f =\sfg =(\sfg_1,\sfg_0): \bar{A}[1]\tilde{T}\to \bar{B}[1]\tilde{T},
\]
where the coalgebra homomorphism \(\sfBar_1f=\sfg_1=\bar{f}\) is specified by its components
\begin{align*}
\bar f_1 &=\bigl( \bar{A}[1] \rTTo^\oin A[1] \rTTo^{f_1} B[1] \rTTo^\opr \bar{B}[1] \bigr),
\\
\bar f_0 &=\bigl( \1 \rTTo^{f_0} B[1] \rTTo^\opr \bar{B}[1] \bigr),
\end{align*}
and the degree~1 functional is
\begin{equation*}
\sfBar_0f =\sfg_0 =\bigl( \bar{A}[1]T \rTTo^{\oin T} A[1]T \rTTo^{\check{f}} B[1] \rTTo^\bv \1 \bigr).
%\label{eq-f0-g0-A1T}
\end{equation*}
\end{proposition}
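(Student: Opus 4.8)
The plan is to follow the three-step pattern used for $\Cobar$: first verify that $\sfBar f=(\bar f,\sfg_0)$ is a morphism of $\chCoalg$, then that identities are preserved, then that composition is preserved. The recurring tool is the combined morphism equation for curved algebras,
\[ \sum_{i_1+\dots+i_k=n} (f_{i_1}\tens f_{i_2}\tdt f_{i_k})b^B_k =\sum_{r+k+t=n} (1^{\tens r}\tens b^A_k\tens1^{\tens t})f_{r+1+t}, \qquad n\ge0, \]
together with the idempotent relations $\oin\opr=1_{\bar A[1]}$, $\opr\oin=1-\bv\bfeta$, $\oin\bv=0$, $\bfeta\opr=0$ and the unit-preservation $\bfeta_Af_1=\bfeta_B$.

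To check that $\sfBar f$ satisfies the defining equations \eqref{eq-del1g1} and \eqref{eq-delta0-delta1gee0} of a morphism of curved homotopy coalgebras, I would appeal to \secref{Components-coderivations-lax-coalgebras}. In \eqref{eq-del1g1} all three terms are $\bar f$-coderivations $\bar A[1]\tilde T\to\bar B[1]\tilde T$, so that equation is equivalent to the identity \eqref{eq-XT-XT-Y-Y} between their check maps $\bar A[1]T\to\bar B[1]$; likewise \eqref{eq-delta0-delta1gee0} is equivalent to \eqref{eq-XT-XT-Y-1}. Substituting the explicit components $\bar b_n=\oin^{\tens n}b_n\opr$, $\bar f_1=\oin f_1\opr$, $\bar f_0=f_0\opr$ and $\sfg_{0,n}=\oin^{\tens n}\check f_n\bv$ into \eqref{eq-XT-XT-Y-Y}, postcomposing with the injection $\oin$ to lift the target into $B[1]$ (the analogue of the $\oin T$ postcomposition in the cobar proof), and absorbing the interior idempotents via $\opr\oin=1-\bv\bfeta$ and $\oin\bv=0$, I expect the identity to collapse to the combined morphism equation above, the curvature correction $\delta_2(\sfg_0\tens1-1\tens\sfg_0)\bar f$ accounting exactly for the $f_0$-insertions in that equation.

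The more delicate of the two is the curvature equation \eqref{eq-XT-XT-Y-1}, which involves the degree $2$ functional $\delta_0^{\Bbar A}=-\oin T\cdot\check b\cdot\bv$ and its target counterpart. Here I would mirror the cobar curvature computation term by term: after substituting $\delta_{0;n}=-\oin^{\tens n}b_n\bv$ and the components of $\sfg_0$, several summands cancel in pairs thanks to $\oin\bv=0$ and $\bfeta\bv=1_\1$, and what remains is again the combined morphism equation postcomposed with $\bv$. Tracking these cancellations, the idempotents $1-\bv\bfeta$, and the Koszul signs produced by the degree $1$ shifted operations $b_n$ is where essentially all the labour lies and is the main obstacle; it is nevertheless purely mechanical and strictly parallel to the curvature verification already performed for $\Cobar$.

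For functoriality, the identity $f=(\id,0)$ gives $\bar f_1=\oin\opr=1$, $\bar f_0=0$ and $\sfg_0=\oin\bv=0$, so $\sfBar f=(\id,0)$. For a composite $h=(f\cdot g):A\to B\to C$ in $\ucAlg$, with $h_1=f_1g_1$ and $h_0=f_0g_1+g_0$, I would compare $\sfBar h$ with $\sfBar f\cdot\sfBar g$ using the composition law of $\chCoalg$, namely $\sfh_1=\sff_1\sfg_1$ and $\sfh_0=\sff_0+e(1)\sff_1(1)\tilde\sfg_0$. On the coalgebra part this reduces component by component to
\[ \bar f_1\bar g_1 =\oin f_1(1-\bv\bfeta)g_1\opr =\oin f_1g_1\opr =\bar h_1, \qquad \bar f_0+\bar f_1\bar g_0 =\bar h_0, \]
the $\bv\bfeta$-term vanishing because $\bfeta_Bg_1=\bfeta_C$ and $\bfeta\opr=0$; on the $\sfg_0$-part it reduces to the matching check identity for $\oin T\cdot\check h\cdot\bv$. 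These are the same cancellations as in the cobar case, so $\Bbar$ respects composition and is a functor.
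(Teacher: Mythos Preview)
Your proposal is correct and follows essentially the same approach as the paper: reduce \eqref{eq-del1g1}--\eqref{eq-delta0-delta1gee0} to the component identities \eqref{eq-XT-XT-Y-Y}--\eqref{eq-XT-XT-Y-1}, expand the interior idempotents via $\opr\oin=1-\bv\bfeta$, and cancel against the combined morphism equation using $\oin\bv=0$ and $\bfeta_Af_1=\bfeta_B$. One slip to fix in the functoriality step: the $0$-component of the composite $\bar f\bar g$ is $\bar g_0+\bar f_0\bar g_1$, not $\bar f_0+\bar f_1\bar g_0$ (the latter does not even compose, since $\bar f_0$ lands in $\bar B[1]$ while $\bar h_0$ must land in $\bar C[1]$); the justification you already give, via $\bfeta_Bg_1=\bfeta_C$ and $\bfeta\opr=0$, is exactly what makes the correct formula collapse to $(g_0+f_0g_1)\opr=\bar h_0$.
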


\begin{proof}
Let us check that $\sfg$ is indeed a morphism of $\chCoalg$.
Equation~\eqref{eq-XT-XT-Y-Y} takes the form
\[ \bar b^A\check{\bar{f}} +\Delta(\sfg_0\tens1 -1\tens\sfg_0)\check{\bar{f}} =\hat\Delta\cdot \check{\bar{f}}\hat{T}\cdot \check{\bar{b}}^B: \bar{A}[1]T \to \bar{B}[1],
\]
that is, for all $n\ge0$
\begin{multline*}
\bar b_n\bar f_1 +f_0\bv\tens\bar f_n +\oin f_1\bv\tens\bar f_{n-1} -\bar f_n\tens f_0\bv -\bar f_{n-1}\tens\oin f_1\bv
\\
=\sum_{i_1+\dots+i_k=n} (\bar f_{i_1}\tens\bar f_{i_2}\tdt\bar f_{i_k})\bar b_k: \bar A[1]^{\tens n} \to \bar B[1].
\end{multline*}
In detail,
\begin{multline*}
\oin^{\tens n}b_n(1-\bv\bfeta)f_1\opr +\oin^{\tens n}\sum_{i_1+i_2=n} \bigl( f_{i_1}\bv\tens f_{i_2}\opr -f_{i_1}\opr\tens f_{i_2}\bv \bigr)
\\
=\oin^{\tens n}\sum_{i_1+\dots+i_k=n} [f_{i_1}(1-\bv\bfeta)\tdt f_{i_k}(1-\bv\bfeta)]b_k\opr.
\end{multline*}
Cancelling the summands without $\bv$ we reduce the equation to the valid identity
\begin{equation*}
\oin^{\tens n}\sum_{i_1+i_2=n} \bigl( f_{i_1}\bv\tens f_{i_2} -f_{i_1}\tens f_{i_2}\bv \bigr)\opr =-\oin^{\tens n}\sum_{i_1+i_2=n} \bigl( f_{i_1}\bv\bfeta\tens f_{i_2} +f_{i_1}\tens f_{i_2}\bv\bfeta \bigr)b_2\opr.
\end{equation*}

Equation~\eqref{eq-XT-XT-Y-1} with inverted signs,
\[ \oin^{\tens n}\check b^A_n\bv +\bar b^A_n\oin f_1\bv +\oin^{\tens n} \Delta(\check{f}\bv\tens\check{f}\bv) =\hat\Delta\cdot (\check{\bar{f}}\oin)\hat{T}\cdot \check{b}^B\bv: \bar A[1]^{\tens n} \to \1,
\]
is written explicitly as
\begin{multline*}
\oin^{\tens n}b_n\bv +\oin^{\tens n}b_n(1-\bv\bfeta)f_1\bv +\oin^{\tens n} \sum_{i_1+i_2=n} f_{i_1}\bv\tens f_{i_2}\bv 
\\
=\oin^{\tens n} \sum_{i_1+\dots+i_k=n} [f_{i_1}(1-\bv\bfeta)\tdt f_{i_k}(1-\bv\bfeta)]b_k\bv: \bar{A}[1]^{\tens n} \to \1.
\end{multline*}
Cancelling the first and the third summands as well as summands that contain $\bv$ only at the end, we obtain the valid equation
\begin{multline*}
\oin^{\tens n} \sum_{i_1+i_2=n} f_{i_1}\bv\tens f_{i_2}\bv
\ifx\chooseClass1
\\
\fi
=-\oin^{\tens n} \sum_{i_1+i_2=n} \bigl[ (f_{i_1}\bv\bfeta\tens f_{i_2} +f_{i_1}\tens f_{i_2}\bv\bfeta)b_2\bv +(f_{i_1}\bv\tens f_{i_2}\bv)\bfeta\bv \bigr].
\end{multline*}

The identity morphism $f=(\id,0)$ is mapped to the identity morphism $\sfBar f=(\id,0)$.
Let us verify that $\Bbar$ agrees with the composition.
If $h=fg$ in $\ucAlg$, \(h_1=f_1g_1\), \(h_0=g_0+f_0g_1\), then $\bar h=\bar f\bar g$.
In fact, the equation
\[ \sum_{i_1+\dots+i_k=n} (\bar f_{i_1}\tens\bar f_{i_2}\tdt\bar f_{i_k})\bar g_k =\bar h_n
\]
follows from
\begin{gather*}
\bar f_1\bar g_1 =\oin f_1(1-\bv\bfeta)g_1\opr =\oin f_1g_1\opr =\bar h_1,
\\
\bar g_0 +\bar f_0\bar g_1 =g_0\opr +f_0(1-\bv\bfeta)g_1\opr =(g_0 +f_0g_1)\opr =\bar h_0.
\end{gather*}

One has to prove also that
\[ \sfBar_0f +(\sfBar_1f) \cdot \sfBar_0g =\sfBar_0h,
\]
which amounts to equations
\[ \oin^{\tens n}f_n\bv +\bar f(\oin T)\check g\bv =\oin^{\tens n}h_n\bv: \bar{A}[1]^{\tens n} \to \1,
\]
for all $n\ge0$.
In fact, the left hand side is
\[ \oin^{\tens n}f_n\bv +\oin^{\tens n}f_n(1-\bv\bfeta)g_1\bv +\delta_{n,0}g_0\bv =\oin^{\tens n}(f_ng_1 +\delta_{n,0}g_0)\bv =\oin^{\tens n}h_n\bv.
\]
The functor \(\Bbar:\ucAlg\to\chCoalg\) is constructed.
\QED\end{proof}

\section{Twisting cochains}
We approach as close as possible to an adjunction between cobar- and bar-\hspace{0pt}constructions using twisting cochains.
If additional grading is present, we show that \(XT\simeq X\tilde{T}\) for some \(X\in\Ob\cv\) and obtain a true adjunction on full subcategories of categories of counit-complemented curved coalgebras and unit-complemented curved algebras.

\begin{definition}
Let $C$ be a curved coalgebra and $A$ be a curved algebra.
A degree~1 map \(\theta:C\to A\) which satisfies the equation
\begin{equation}
\theta m_1^A +\delta^C_1\theta =\delta^C_0\eta^A +\eps^Cm^A_0 -\delta^C_2(\theta\tens\theta)m^A_2: C \to A
\label{eq-theta-b}
\end{equation}
is called a \emph{twisting cochain}.
The set of twisting cochains is denoted \(\Tw(C,A)\).
\end{definition}

Assume additionally that $C$ is counit-complemented and $A$ is unit-complemented.
The reason to consider twisting cochains is given by the following diagram where the top bijections have to be constructed
\begin{diagram}[LaTeXeqno,bottom]
\label{dia-C-1T-A}
\ucAlg(\Cobar C,A) &\rDashTo &\Tw(C,A) &\lDashTo &\CCoalg(C,\Bbar A)
\\
\dMono &&&&\dMono
\\
\Alg(\bar{C}[-1]T,A)\times A^1 &&&&\hCoalg(C,\bar A[1]\tilde T)\times\und\cv(C,\1)^1
\\
\dTTo<\wr>{i_1} &&\dMono &&\dTTo<{i_3}>\wr
\\
\cv(\bar{C}[-1],A)\times\cv(\1[-1],A) &&&&\cv(C,\bar A[1])\times\cv(C,\1[1])
\\
\dTTo<\wr>{i_2} &&&&\dTTo<{i_4}>\wr
\\
\cv(C[-1],A) &\rTTo_\sim^{\sigma^{-1}\cdot-} &\und\cv(C,A)^1 &\lTTo_\sim^{-\cdot\sigma^{-1}} &\cv(C,A[1])
\\
\dLine &&&&\uTTo
\\
\HmeetV &&\rLine^{[1]} &&\HmeetV
\end{diagram}
The bijection $i_3$ is described in \propref{pro-lCoalg(CXT)-V(CX)}, bijections $i_2$ and $i_4$ are due to direct sum decompositions $C=\bar C\oplus\1$ and $A=\bar A\oplus\1$, and bijection $i_1$ describes homomorphisms from a free algebra.
A pair of morphisms of $\cv$ \((\check\sff_1:\bar{C}[-1]\to A,\sigma\sff_0:\1[-1]\to A)\) is mapped to a degree~1 map
\[ \sigma^{-1}[(\check\sff_1,\sigma\sff_0)i_2] =\theta=
\Bigl(
\begin{matrix}
\sigma^{-1}\check\sff_1
\\
\sff_0
\end{matrix}
\Bigr)
:\bar C\oplus\1 \to A.
\]
A pair of morphisms of $\cv$ \((\check\sfg_1:C\to\bar A[1],\sfg_0\sigma:C\to\1[1])\) is mapped to a degree~1 map
\[ [(\check\sfg_1,\sfg_0\sigma)i_4]\sigma^{-1} =\theta=
\bigl(
\begin{matrix}
\check\sfg_1\sigma^{-1} & \sfg_0
\end{matrix}
\bigr)
: C \to \bar A\oplus\1.
\]

\begin{proposition}\label{pro-unique-top-horizontal-bijections}
There are unique top horizontal bijections in diagram~\eqref{dia-C-1T-A} which make both hexagons commutative.
\end{proposition}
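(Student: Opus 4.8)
The plan is to show that each of the two \emph{outer} vertical composites in \eqref{dia-C-1T-A} is injective with image exactly $\Tw(C,A)$; the dashed top arrows then exist and are unique by a purely formal chase. Reading the left column from the top, the first arrow $\ucAlg(\Cobar C,A)\to\Alg(\bar C[-1]T,A)\times A^1$ is the forgetful mono (a morphism of unit\nobreakdash-complemented curved algebras is a pair $(\sff_1,\sff_0)$ subject to \eqref{eq-f1m1-f1ff1m-m1f1}, and we simply forget these equations), while $i_1$, $i_2$ and the bottom identification $\sigma^{-1}\cdot-$ are bijections; hence the composite $\ucAlg(\Cobar C,A)\to\und\cv(C,A)^1$ is injective. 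The same holds on the right, using \propref{pro-lCoalg(CXT)-V(CX)} for $i_3$, the decomposition $A=\bar A\oplus\1$ for $i_4$, and $-\cdot\sigma^{-1}$ at the bottom. Since the middle vertical arrow is the inclusion $\Tw(C,A)\hookrightarrow\und\cv(C,A)^1$, which is a mono, there is at most one map filling each dashed arrow so that the corresponding hexagon commutes, and such a map exists precisely when the image of the outer composite is contained in, and then equal to, $\Tw(C,A)$. Thus the whole statement reduces to identifying both images with $\Tw(C,A)$.

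For the left hexagon, a morphism $(\sff_1,\sff_0)\colon\Cobar C\to A$ is carried along the left column to the degree~$1$ map $\theta\colon C=\bar C\oplus\1\to A$ that restricts to $\sigma^{-1}\check\sff_1$ on $\bar C$ and to $\sff_0$ on $\1$, where $\check\sff_1=\sff_1|_{\bar C[-1]}$ is the restriction of the algebra homomorphism $\sff_1$ to the generating object (this restriction is exactly $i_1$). The first equation of \eqref{eq-f1m1-f1ff1m-m1f1} is an identity of $\sff_1$\nobreakdash-derivations $\Cobar C\to A$, hence equivalent to its restriction to the generators $\bar C[-1]$, while the second is already an identity of curvature elements $\1\to A$. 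Substituting the explicit structure of $\Cobar C$, namely $m_1^{\Cobar C}=\bar\xi$ and the curvature \eqref{eq-mCobarC0-CT}, and using $\oin\opr=1_{\bar C[-1]}$ together with the identities relating $\bar\xi_n$, $\xi_n$, $\beps$, $\bw$ and the expansion \eqref{eq-xi2-pr-pr}, one checks that these two restricted equations are precisely the $\bar C$- and $\1$-components of the single twisting\nobreakdash-cochain equation \eqref{eq-theta-b} for $\theta$ under the decomposition $C=\bar C\oplus\1$. Hence the left image is exactly $\Tw(C,A)$.

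For the right hexagon, a pair $(\sfg_1,\sfg_0)\in\CCoalg(C,\Bbar A)$ is carried along the right column to the degree~$1$ map $\theta\colon C\to\bar A\oplus\1=A$ with components $\check\sfg_1\sigma^{-1}\colon C\to\bar A$ and $\sfg_0\colon C\to\1$, where by \propref{pro-lCoalg(CXT)-V(CX)} the homotopy coalgebra morphism $\sfg_1$ is encoded by $\check\sfg_1=\sfg_1(1)\pr_1\colon C\to\bar A[1]$. The defining equations \eqref{eq-g1delta1-CXT}, \eqref{eq-g1delta0-C1}, in the equivalent closed form \eqref{eq-CCXX-C-XT}, \eqref{eq-CCX1-C-XT}, are written in terms of $\check\sfg_1$, $\sfg_0$ and the components $\delta_{1;k}=\bar b_k$, $\delta_{0;k}$ of the curved homotopy coalgebra $\Bbar A$, which are built from $b_n$ via the splitting $\oin,\opr,\bv,\bfeta$. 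Expanding these and again using the splitting relations, the two equations combine, once more, into the $\bar C$- and $\1$-components of \eqref{eq-theta-b} for the same $\theta$. Therefore the right image is also $\Tw(C,A)$.

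Consequently both outer composites are bijections onto $\Tw(C,A)$, and the unique maps filling the dashed arrows so that the hexagons commute are the resulting bijections $\ucAlg(\Cobar C,A)\to\Tw(C,A)$ and $\CCoalg(C,\Bbar A)\to\Tw(C,A)$. The main obstacle is the two bookkeeping computations in the preceding paragraphs: verifying that the curved\nobreakdash-algebra\nobreakdash-morphism equations and the bimodule equations each reduce \emph{exactly} to \eqref{eq-theta-b}. The delicate point there is to keep track of the shift signs and of the sign conventions $\delta_0=-m_0^\op$, $\sfg_0=-\sff_0^\op$ and those in \eqref{eq-d2-m0} and \eqref{eq-mCobarC0-CT}, so that the curvature term $\delta^C_0\eta^A+\eps^Cm^A_0$ and the quadratic Maurer--Cartan term $-\delta^C_2(\theta\tens\theta)m^A_2$ of \eqref{eq-theta-b} emerge with the correct signs from both directions.
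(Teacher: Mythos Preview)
Your approach is correct and essentially identical to the paper's: both reduce the curved-algebra-morphism equations to generators of $\bar C[-1]T$ and then identify the resulting pair with the $\bar C$- and $\1$-components of \eqref{eq-theta-b}, and both rewrite \eqref{eq-g1delta1-CXT}--\eqref{eq-g1delta0-C1} in the closed form \eqref{eq-CCXX-C-XT}--\eqref{eq-CCX1-C-XT} and identify this pair with components of \eqref{eq-theta-b}. Your formal ``image equals $\Tw(C,A)$'' framing of the uniqueness is a clean way to say what the paper leaves implicit.

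One slip to fix: in the right-hexagon paragraph you write that the two equations ``combine, once more, into the $\bar C$- and $\1$-components of \eqref{eq-theta-b}''. That is the decomposition used on the \emph{left} side. On the right, $\theta$ is assembled with respect to the \emph{target} splitting $A=\bar A\oplus\1$: the pair \eqref{eq-CCXX-C-XT}--\eqref{eq-CCX1-C-XT} consists of maps $C\to\bar A[1]$ and $C\to\1$, so they are the $\bar A$- and $\1$-components of \eqref{eq-theta-b} (equivalently, the postcompositions of the shifted form \eqref{eq-sigma-theta-sigma-b} with $\opr$ and $\bv$), exactly as the paper does. This does not affect the validity of your argument, but the asymmetry between the two hexagons---source splitting on the left, target splitting on the right---is worth stating correctly.
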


\begin{proof}
Denote by \(\sff_1:\bar{C}[-1]T\to A\in\Alg\) and \(\sfg_1:C\to\bar A[1]\tilde{T}\in\hCoalg\) morphisms coming from $\check\sff_1$ and $\check\sfg_1$, respectively.
We are going to show that systems of equations \eqref{eq-f1m1-f1ff1m-m1f1} on \((\sff_1,\sff_0)\) is equivalent to equation \eqref{eq-theta-b} on
\[ \theta=
\Bigl(
\begin{matrix}
\oin\theta
\\
\sfw\theta
\end{matrix}
\Bigr)=
\Bigl(
\begin{matrix}
\sigma^{-1}\check\sff_1
\\
\sff_0
\end{matrix}
\Bigr).
\]

All three terms of the first equation of \eqref{eq-f1m1-f1ff1m-m1f1} are $\sff_1$\n-derivations $\bar{C}[-1]T\to A$, hence, the equation is equivalent to its precomposition with \(\inj_1:\bar{C}[-1]\to\bar{C}[-1]T\):
\begin{equation}
\begin{split}
\check\sff_1m^A_1 +\check\sff_1(1\tens\sff_0-\sff_0\tens1)m^A_2 &=\check m^{\Cobar C}_1\sff_1: \bar{C}[-1]\to A,
\\
m^A_0 -\sff_0m^A_1 -(\sff_0\tens\sff_0)m^A_2 &=m^{\Cobar C}_0\sff_1: \1 \to A.
\end{split}
\label{eq-fm-mf-m-mf}
\end{equation}
In more detail equations~\eqref{eq-fm-mf-m-mf} read
\begin{subequations}
\begin{align}
\check\sff_1m^A_1 +\check\sff_1(1\tens\sff_0-\sff_0\tens1)m^A_2 &=\bar\xi_0\eta +\bar\xi_1\check\sff_1 +\bar\xi_2(\check\sff_1\tens\check\sff_1)m^A_2: \bar{C}[-1]\to A,
\\
m^A_0 -\sff_0m^A_1 -(\sff_0\tens\sff_0)m^A_2 &=-\bw\xi_0\eta^A -\bw\xi_1\opr\check\sff_1 \notag
\\
&-(\bw\tens\bw+\bw\xi_2)(\opr\check\sff_1\tens\opr\check\sff_1)m^A_2: \1 \to A.
\end{align}
\label{eq-fm-xe-xf-xffm}
\end{subequations}

We have
\begin{equation}
\theta=(\opr\oin+\eps\sfw)\theta=\opr\sigma^{-1}\check\sff_1+\eps\sff_0:C\to A.
\label{eq-theta=(prinepsw)theta}
\end{equation}
Equation~\eqref{eq-theta-b} can be rewritten as
\begin{equation}
\sigma\theta\sigma b_1 +\xi_1\sigma\theta\sigma +\xi_0\bfeta +\beps b_0 +\xi_2(\sigma\theta\sigma\tens\sigma\theta\sigma)b_2 =0: C[-1] \to A[1].
\label{eq-sigma-theta-sigma-b}
\end{equation}
Let us plug in it the expression
\(\sigma\theta\sigma=\opr\check\sff_1\sigma+\beps\sff_0\sigma:C[-1] \to A[1]\) and postcompose with $\sigma^{-1}$.
The obtained equation is
\begin{multline}
-\opr\check\sff_1m_1 -\beps\sff_0m_1 +\xi_1\opr\check\sff_1 +\xi_0\eta +\beps m_0 +\xi_2(\opr\check\sff_1\tens\opr\check\sff_1)m_2
\\
+(\sff_0\tens\opr\check\sff_1)m_2 -(\opr\check\sff_1\tens\sff_0)m_2 -\beps(\sff_0\tens\sff_0)m_2 =0: C[-1] \to A.
\label{eq-dovge-theta-ff}
\end{multline}
This equation is equivalent to the system of two equations obtained by precomposing \eqref{eq-dovge-theta-ff} with \(\oin:\bar C[-1]\rMono C[-1]\) and \(\bw:\1\to C[-1]\).
One verifies immediately that these two equations are nothing else but (\ref{eq-fm-xe-xf-xffm}a) and (\ref{eq-fm-xe-xf-xffm}b).

Let us show now that system of equations \eqref{eq-g1delta1-CXT}--\eqref{eq-g1delta0-C1} on \((\sfg_1,\sfg_0)\) taken in the form \eqref{eq-CCXX-C-XT}--\eqref{eq-CCX1-C-XT},
\begin{equation}
\begin{split}
\delta^C_1\check\sfg_1 +\delta^C_2(\sfg_0\tens1 -1\tens\sfg_0)\check\sfg_1 &=\sfg_1(1)\check\delta^{\Bbar A}_1: C\to \bar A[1],
\\
\delta^C_0 -\delta^C_1\sfg_0 -\delta^C_2(\sfg_0\tens\sfg_0) &=\sfg_1(1)\delta^{\Bbar A}_0: C\to \1,
\end{split}
\label{eq-dg-dg11gg-gd-d-dg}
\end{equation}
is equivalent to equations \eqref{eq-theta-b} and \eqref{eq-sigma-theta-sigma-b} on
\(\theta=\bigl(
\begin{matrix}
\theta\opr & \theta\sfv
\end{matrix}
\bigr)=\bigl(
\begin{matrix}
\check\sfg_1\sigma^{-1} & \sfg_0
\end{matrix}
\bigr)\).
We have
\begin{equation}
\theta=\theta(\opr\oin+\sfv\eta)=\check\sfg_1\sigma^{-1}\oin+\sfg_0\eta:C\to A,
\label{eq-theta=theta(prinveta)}
\end{equation}
hence \(\sigma\theta\sigma=\sigma\check\sfg_1\oin+\sigma\sfg_0\bfeta:C[-1]\to A[1]\).
Plug this expression in equation~\eqref{eq-sigma-theta-sigma-b} and precompose with $\sigma^{-1}$:
\begin{multline*}
\check\sfg_1\oin b_1 -\delta_1\check\sfg_1\oin -\delta_1\sfg_0\bfeta +\delta_0\bfeta +\eps b_0 +\delta_2(\check\sfg_1\oin\tens\check\sfg_1\oin)b_2
\\
+\delta_2(\check\sfg_1\oin\tens\sfg_0) -\delta_2(\sfg_0\tens\check\sfg_1\oin) -\delta_2(\sfg_0\tens\sfg_0)\bfeta =0: C \to A[1].
%\label{eq-dovge-theta-gg}
\end{multline*}
This equation is equivalent to the system of two equations obtained by postcomposing it with \(\opr:A[1]\rEpi \bar A[1]\) and \(\bv:A[1]\to\1\).
And these two equations are nothing else but \eqref{eq-dg-dg11gg-gd-d-dg} taken in the expanded form
\begin{gather*}
\delta^C_1\check\sfg_1 +\delta^C_2(\sfg_0\tens1 -1\tens\sfg_0)\check\sfg_1 =\eps^C\bar b^A_0 +\check\sfg_1\bar b^A_1 +\delta^C_2(\check\sfg_1\tens\check\sfg_1)\bar b^A_2: C\to \bar A[1],
\\
\delta^C_0 -\delta^C_1\sfg_0 -\delta^C_2(\sfg_0\tens\sfg_0) =-\eps^Cb^A_0\bv -\check\sfg_1\oin b^A_1\bv -\delta^C_2(\check\sfg_1\oin\tens\check\sfg_1\oin)b^A_2\bv: C\to \1.
%\label{eq-dg-dg11gg-ebp-pgbp-dpgpgbp}
\end{gather*}
The sets of solutions of these systems coincide, hence the second bijection at the top of diagram~\eqref{dia-C-1T-A}.
\QED\end{proof}

\begin{proposition}
The actions
\begin{alignat*}3
\cCoalg(C,D) \times \Tw(D,A) &\longrightarrow \Tw(C,A), &\quad (\sfg,\theta) &\longmapsto \sfg\rightharpoonup\theta &&=\sfg_1\theta +\sfg_0\eta,
\\
\Tw(C,A) \times \cAlg(A,B) &\longrightarrow \Tw(C,B), &\quad (\theta,\sff) &\longmapsto \theta\leftharpoonup\sff &&=\theta\sff_1 +\eps\sff_0
\end{alignat*}
make $\Tw$ into a $\cCoalg\text-\cAlg$-bimodule, in other words into a functor \(\Tw:\cCoalg^\op\) \(\times\cAlg\to\Set\).
\end{proposition}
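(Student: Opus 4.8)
The plan is to establish the proposition in two stages: first verify that the two displayed assignments actually land in $\Tw$ (so that the actions are defined), and then check the functoriality axioms, which are purely formal manipulations with the composition laws of $\cCoalg$ and $\cAlg$.

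\emph{Well-definedness of the left action.} Fix $\sfg=(\sfg_1,\sfg_0):C\to D\in\cCoalg$ and $\theta\in\Tw(D,A)$, and set $\theta'=\sfg\rightharpoonup\theta=\sfg_1\theta+\sfg_0\eta^A$. First I would substitute $\theta'$ into the left-hand side $\theta'm^A_1+\delta^C_1\theta'$ of \eqref{eq-theta-b} and rewrite it using four inputs: (i) $\eta^A m^A_1=0$, dual to the shifted unit relation $\bfeta b_1=0$; (ii) the two morphism identities \eqref{eq-deltas-gees} for $\sfg$, to replace $\delta^C_1\sfg_1$ and $\delta^C_1\sfg_0$; (iii) the twisting cochain equation \eqref{eq-theta-b} for $\theta$, precomposed with $\sfg_1$; and (iv) the preservation identities $\sfg_1\eps^D=\eps^C$ and $\delta^C_2(\sfg_1\tens\sfg_1)=\sfg_1\delta^D_2$. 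After the two $\sfg_1\delta^D_0\eta^A$ contributions cancel this leaves
\begin{multline*}
\theta'm^A_1+\delta^C_1\theta' =\delta^C_0\eta^A+\eps^C m^A_0 -\sfg_1\delta^D_2(\theta\tens\theta)m^A_2
\\
-\delta^C_2(\sfg_0\tens\sfg_1-\sfg_1\tens\sfg_0)\theta -\delta^C_2(\sfg_0\tens\sfg_0)\eta^A.
\end{multline*}
It then remains to recognise the last three summands as $-\delta^C_2(\theta'\tens\theta')m^A_2$. Expanding $\theta'\tens\theta'$ into its four terms, the two diagonal terms reproduce $\sfg_1\delta^D_2(\theta\tens\theta)m^A_2$ (again by (iv)) and $\delta^C_2(\sfg_0\tens\sfg_0)\eta^A$ (using $(\eta^A\tens\eta^A)m^A_2=\eta^A$), while the two cross terms recombine into $\delta^C_2(\sfg_0\tens\sfg_1-\sfg_1\tens\sfg_0)\theta$, using $(\theta\tens\eta^A)m^A_2=\theta=(\eta^A\tens\theta)m^A_2$ together with the Koszul sign $(-1)^{\deg\sfg_0\cdot\deg\theta}=-1$ appearing when $\sfg_1\theta\tens\sfg_0\eta^A$ is factored as $-(\sfg_1\tens\sfg_0)(\theta\tens\eta^A)$. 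This confirms \eqref{eq-theta-b} for $\theta'$.

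\emph{Well-definedness of the right action} is dual. For $\theta\in\Tw(C,A)$ and $\sff=(\sff_1,\sff_0):A\to B\in\cAlg$ put $\theta''=\theta\leftharpoonup\sff=\theta\sff_1+\eps^C\sff_0$. Substituting into \eqref{eq-theta-b} I would now use $\eta^B m^B_1=0$, the algebra-homomorphism identities $\eta^A\sff_1=\eta^B$ and $m^A_2\sff_1=(\sff_1\tens\sff_1)m^B_2$, the two morphism relations \eqref{eq-f1m1-f1ff1m-m1f1} for $\sff$ (to rewrite $\sff_1 m^B_1$ and $m^A_0\sff_1$), and the twisting cochain equation for $\theta$ postcomposed with $\sff_1$. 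The same bookkeeping, with the analogous sign produced by the two cross terms of $\theta''\tens\theta''$, yields \eqref{eq-theta-b} for $\theta''$.

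\emph{Functoriality.} Here everything is immediate from the composition rules. For the composite $C\rTTo^{\sff} D\rTTo^{\sfg} E$ in $\cCoalg$, where $\sfh_1=\sff_1\sfg_1$ and $\sfh_0=\sff_0+\sff_1\sfg_0$, one has
\[ (\sff\cdot\sfg)\rightharpoonup\theta =\sff_1\sfg_1\theta+(\sff_0+\sff_1\sfg_0)\eta^A =\sff\rightharpoonup(\sfg\rightharpoonup\theta), \]
so the left action is contravariantly functorial. For the composite $A\rTTo^{\sff} B\rTTo^{\sfg} B'$ in $\cAlg$, where $\sfh_1=\sff_1\sfg_1$ and $\sfh_0=\sff_0\sfg_1+\sfg_0$, one gets $\theta\leftharpoonup(\sff\cdot\sfg)=(\theta\leftharpoonup\sff)\leftharpoonup\sfg$. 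The two actions commute, since $\eta^A\sff_1=\eta^B$ and $\sfg_1\eps^D=\eps^C$ give
\[ (\sfg\rightharpoonup\theta)\leftharpoonup\sff =\sfg_1\theta\sff_1+\sfg_0\eta^B+\eps^C\sff_0 =\sfg\rightharpoonup(\theta\leftharpoonup\sff), \]
and the identity morphisms $(\id,0)$ of both categories act as the identity on $\Tw$. Hence $\Tw$ is a functor $\cCoalg^\op\times\cAlg\to\Set$. The only genuine obstacle is the sign bookkeeping in the two well-definedness verifications; the functoriality axioms cost nothing.
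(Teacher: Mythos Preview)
Your argument is correct and is exactly the verification the paper leaves implicit under ``one easily verifies''; you have simply spelled out the computation that the paper omits. One small slip in the dual verification: the identity playing the role of $\eta^A m^A_1=0$ on the coalgebra side is $\delta^C_1\eps^C=0$ (dual to $\xi_1\beps=0$), not $\eta^B m^B_1=0$, since in $\theta''=\theta\sff_1+\eps^C\sff_0$ it is the term $\delta^C_1\eps^C\sff_0$ that must vanish; otherwise your outline goes through unchanged.
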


\begin{proof}
One easily verifies that \(\sfg\rightharpoonup\theta\) and \(\theta\leftharpoonup\sff\) satisfy \eqref{eq-theta-b}, both actions are associative and \((\sfg\rightharpoonup\theta)\leftharpoonup\sff =\sfg\rightharpoonup(\theta\leftharpoonup\sff)\).
\QED\end{proof}

Let us show naturality of bijections constructed in \propref{pro-unique-top-horizontal-bijections}.

\begin{proposition}
The bijection
\begin{equation}
\cAlg(\Cobar C,A) \to \Tw(C,A)
\label{eq-cAlg(CobarCA)-Tw(CA)}
\end{equation}
is an isomorphism of $\ucCoalg$-$\cAlg$-bimodules.
The bijection
\begin{equation}
\Tw(C,A) \leftarrow \CCoalg(C,\Bbar A)
\label{eq-Tw(CA)-CCoalg(CBarA)}
\end{equation}
is an isomorphism of $\cCoalg$-$\ucAlg$-bimodules.
\end{proposition}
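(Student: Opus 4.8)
The plan is to regard each bijection as the operation ``take the associated twisting cochain'' and, for the left and the right action in turn, to check that the twisting cochain of a composite equals the action applied to the twisting cochain. By \propref{pro-unique-top-horizontal-bijections} the map \eqref{eq-cAlg(CobarCA)-Tw(CA)} sends $\sff=(\sff_1,\sff_0):\Cobar C\to A$ to $\theta=\opr\sigma^{-1}\check\sff_1+\eps\sff_0$, see \eqref{eq-theta=(prinepsw)theta}, while \eqref{eq-Tw(CA)-CCoalg(CBarA)} sends $\sfg=(\sfg_1,\sfg_0)\in\CCoalg(C,\Bbar A)$ to $\theta=\check\sfg_1\sigma^{-1}\oin+\sfg_0\eta$, see \eqref{eq-theta=theta(prinveta)}. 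Both formulae are affine in the data $(\check\sff_1,\sff_0)$, resp.\ $(\check\sfg_1,\sfg_0)$; the actions $\sfj\rightharpoonup\theta=\sfj_1\theta+\sfj_0\eta$ and $\theta\leftharpoonup\sfh=\theta\sfh_1+\eps\sfh_0$, the composition laws of $\cAlg$ and $\cCoalg$, and the $\CCoalg$-actions \eqref{eq-CCoalg(DXT)-CCoalg(CXT)}, \eqref{eq-CCoalg(CXT)-CCoalg(CYT)} are affine as well. Hence each of the four naturality squares reduces to an identity between two affine expressions, to be verified by substitution.

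Two squares are immediate, namely those whose action does not pass through a functor. For the right $\cAlg$-action on \eqref{eq-cAlg(CobarCA)-Tw(CA)}, a morphism $\sfh=(\sfh_1,\sfh_0):A\to B$ gives $\sff\cdot\sfh=(\sff_1\sfh_1,\sff_0\sfh_1+\sfh_0)$, with twisting cochain $\opr\sigma^{-1}\check\sff_1\sfh_1+\eps(\sff_0\sfh_1+\sfh_0)=\theta\sfh_1+\eps\sfh_0=\theta\leftharpoonup\sfh$. For the left $\cCoalg$-action on \eqref{eq-Tw(CA)-CCoalg(CBarA)}, precomposition with $\sfj=(\sfj_1,\sfj_0)$ gives, by \eqref{eq-CCoalg(DXT)-CCoalg(CXT)}, the pair $(\sfj_1\sfg_1,\sfj_0+\sfj_1\sfg_0)$; since $\check{(\sfj_1\sfg_1)}=\sfj_1\check\sfg_1$ by \propref{pro-lCoalg(CXT)-V(CX)}, its twisting cochain is $\sfj_1\check\sfg_1\sigma^{-1}\oin+(\sfj_0+\sfj_1\sfg_0)\eta=\sfj_1\theta+\sfj_0\eta=\sfj\rightharpoonup\theta$.

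The two remaining squares, in which the action passes through $\Cobar$ or $\Bbar$, are the substantive ones and I expect them to be the main obstacle, since they force one to unfold these functors on morphisms, including the change-of-connection and curvature data built from the splittings $\bw$ and $\bv$. For the left $\ucCoalg$-action on \eqref{eq-cAlg(CobarCA)-Tw(CA)}, take $\sfj=(\sfj_1,\sfj_0):\tilde C\to C$; then $\Cobar\sfj\cdot\sff=(\bar\sfj\,\sff_1,(\sfCobar_0\sfj)\sff_1+\sff_0)$, and I would compute its twisting cochain by restricting the algebra part $\bar\sfj\,\sff_1$ to the generators $\bar{\tilde C}[-1]$ (all three terms being $\bar\sfj$-derivations), exactly as in the proof that $\Cobar$ respects composition. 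Using $\sfj_1\eps=\eps$, the linear ($n=1$) contribution reproduces the coalgebra part of $\sfj_1\theta$, while the term carrying $\sfCobar_0\sfj$, which is supported on the complementary summand $\1$ via $\bw$, assembles into $\sfj_0\eta$; combined, they give $\sfj\rightharpoonup\theta$. Dually, for the right $\ucAlg$-action on \eqref{eq-Tw(CA)-CCoalg(CBarA)}, take $\sfh=(\sfh_1,\sfh_0):A\to B$; then $\sfg\cdot\Bbar\sfh=(\sfg_1\bar\sfh,\sfg_0+\sfg_1(1)\wt{\sfBar_0\sfh})$ by \eqref{eq-CCoalg(CXT)-CCoalg(CYT)}, and I would pass to $\pr_1$-components via \propref{pro-lCoalg(CXT)-V(CX)} and split off the summand $\1$ of $A=\bar A\oplus\1$: the coalgebra part yields $\theta\sfh_1$ and the $\bv$-part of $\sfBar_0\sfh$ yields $\eps\sfh_0$, so that the twisting cochain of $\sfg\cdot\Bbar\sfh$ equals $\theta\leftharpoonup\sfh$. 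In both squares the real work is bookkeeping the idempotents $\opr,\oin$ and the splittings $\bw,\bv$; once the linear and the complementary contributions are separated, the identities collapse onto \eqref{eq-g1xi1-xiggg} and the corresponding combined relation for curved algebra morphisms.

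Associativity of the two equivariant correspondences is inherited from the bimodule structures already established, so both bijections are isomorphisms of the stated bimodules.
\QED
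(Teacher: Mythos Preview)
Your approach matches the paper's: verify four equivariance squares directly, two of which are immediate and two of which require unfolding $\Cobar$ and $\Bbar$ on morphisms. The two easy squares are handled correctly and essentially as in the paper.

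The two substantive squares, however, are only sketched, and the sketches contain wrong hints. The parenthetical ``all three terms being $\bar\sfj$-derivations'' is out of place: no derivation argument is needed here, one simply computes $\widecheck{(\bar\sfj\,\sff_1)}=\bar j_0\eta+\bar j_1\check\sff_1$ by restricting to $\bar{\tilde C}[-1]$. Likewise the claim that the identities ``collapse onto \eqref{eq-g1xi1-xiggg}'' is incorrect; that equation plays no role. What actually makes both hard squares work is the idempotent splitting alone. For the $\Cobar$ side one uses $\opr\oin=1-\eps\sfw$ on $\tilde C$, so that
\[
\opr\sigma^{-1}\oin j_k + \eps\sfw\sigma^{-1}j_k = \sigma^{-1}j_k = \sfj_k\sigma^{-1},\qquad k=0,1,
\]
and the four expanded terms of $\opr\sigma^{-1}\widecheck{(\Cobar_1j)\sff_1}+\eps[(\sfCobar_0j)\sff_1+\sff_0]$ recombine pairwise into $\sfj_1(\opr\sigma^{-1}\check\sff_1+\eps\sff_0)+\sfj_0\eta$, using $\sfj_1\eps=\eps$. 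Dually, on the $\Bbar$ side one uses $\opr\oin=1-\sfv\eta$ on $B$ and $\eta\sfh_1=\eta$ to recombine $\widecheck{(\sfg_1\bar\sfh)}\sigma^{-1}\oin+(\sfg_0+\sfg_1(1)\wt{\sfBar_0\sfh})\eta$ into $(\check\sfg_1\sigma^{-1}\oin+\sfg_0\eta)\sfh_1+\eps\sfh_0$. These are short computations but they should be written out; the current sketch does not supply them, and the references it gives lead the reader astray.
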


\begin{proof}
First of all, \eqref{eq-cAlg(CobarCA)-Tw(CA)} is a homomorphism with respect to the right action of \(\sfh\in\cAlg(A,B)\).
In fact, for an arbitrary \(\sff\in\cAlg(\Cobar C,A)\)
\[ 
\begin{pmatrix}
\sigma^{-1}\widecheck{(\sff\cdot\sfh)}_1
\\
(\sff\cdot\sfh)_0
\end{pmatrix}
=
\begin{pmatrix}
\sigma^{-1}\check\sff_1\sfh_1
\\
\sff_0\sfh_1 +\sfh_0
\end{pmatrix}
=
\begin{pmatrix}
\sigma^{-1}\check\sff_1
\\
\sff_0
\end{pmatrix}
\leftharpoonup\sfh: \bar C\oplus\1 \to A.
\]

Also \eqref{eq-cAlg(CobarCA)-Tw(CA)} is a homomorphism with respect to the left action of \(\sfj\in\ucCoalg(D,C)\), that is, with the use of \eqref{eq-theta=(prinepsw)theta} for an arbitrary \(\sff\in\cAlg(\Cobar C,A)\)
\begin{equation}
\opr\sigma^{-1}\widecheck{[(\Cobar j)\sff]}_1+\eps[(\Cobar j)\sff]_0 =\sfj\rightharpoonup(\opr\sigma^{-1}\check\sff_1+\eps\sff_0): D\to A.
\label{eq-pr(Cobarj)f1-eps(Cobarj)f0}
\end{equation}
In fact, the left hand side is
\begin{align*}
&\opr\sigma^{-1}\widecheck{(\Cobar_1j)}\sff_1 +\eps[(\Cobar_0j)\sff_1+\sff_0]
\\
&=\opr\sigma^{-1}\oin j_0\eta +\opr\sigma^{-1}\oin j_1\opr\check\sff_1 +\eps[\bw j_0\eta +\bw j_1\opr\check\sff_1 +\sff_0]
\\
&=(1-\eps\sfw)\sigma^{-1}j_0\eta +(1-\eps\sfw)\sigma^{-1}j_1\opr\check\sff_1 +\eps\sfw\sigma^{-1}j_0\eta +\eps\sfw\sigma^{-1}j_1\opr\check\sff_1 +\eps\sff_0
\\
&=\sigma^{-1}j_0\eta +\sigma^{-1}j_1\opr\check\sff_1 +\sfj_1\eps\sff_0 =\sfj_1\opr\sigma^{-1}\check\sff_1 +\sfj_1\eps\sff_0 +\sfj_0\eta,
\end{align*}
which is the right hand side of \eqref{eq-pr(Cobarj)f1-eps(Cobarj)f0}.

Thirdly, \eqref{eq-Tw(CA)-CCoalg(CBarA)} is a homomorphism with respect to the left action of \(\sfj\in\cCoalg(D,C)\).
In fact, due to \eqref{eq-CCoalg(DXT)-CCoalg(CXT)} for an arbitrary \((\sfg_1,\sfg_0)\in\CCoalg(C,\Bbar A)\) the element \(\sfj\cdot(\sfg_1,\sfg_0)=(\sfj_1\sfg_1,\sfj_0+\sfj_1\sfg_0)\) is mapped to
\[ 
\begin{pmatrix}
\sfj_1\check\sfg_1\sigma^{-1} & \sfj_0+\sfj_1\sfg_0
\end{pmatrix}
=\sfj\rightharpoonup
\begin{pmatrix}
\check\sfg_1\sigma^{-1} & \sfg_0
\end{pmatrix}
:C\to \bar A\oplus\1.
\]

Finally, \eqref{eq-Tw(CA)-CCoalg(CBarA)} is a homomorphism with respect to the right action of \(\sfh\in\ucAlg(A,B)\), that is, with the use of \eqref{eq-theta=theta(prinveta)} for an arbitrary \(\sfg=(\sfg_1,\sfg_0)\in\CCoalg(C,\Bbar A)\)
\begin{equation}
\widecheck{(\sfg\cdot\Bbar\sfh)}_1\sigma^{-1}\oin+(\sfg\cdot\Bbar\sfh)_0\eta
=(\check\sfg_1\sigma^{-1}\oin+\sfg_0\eta)\leftharpoonup\sfh.
\label{eq-gBarh-(gin)h}
\end{equation}
In fact, due to \eqref{eq-CCoalg(CXT)-CCoalg(CYT)} the left hand side is
\begin{align*}
&\sfg_1\widecheck{(\Bbar_1\sfh)}\sigma^{-1}\oin +(\sfg_0+\sfg_1(1)\wt{\Bbar_0\sfh})\eta
\\
&=\check\sfg_1\oin h_1\opr\sigma^{-1}\oin +\eps h_0\opr\sigma^{-1}\oin +\sfg_0\eta +\check\sfg_1\oin h_1\bv\eta +\eps h_0\bv\eta
\\
&=\check\sfg_1\oin h_1\sigma^{-1}(1-\sfv\eta) +\eps h_0\sigma^{-1}(1-\sfv\eta) +\sfg_0\eta +\check\sfg_1\oin h_1\sigma^{-1}\sfv\eta +\eps h_0\sigma^{-1}\sfv\eta
\\
&=\check\sfg_1\oin h_1\sigma^{-1} +\eps h_0\sigma^{-1} +\sfg_0\eta\sfh_1 =\check\sfg_1\sigma^{-1}\oin\sfh_1 +\sfg_0\eta\sfh_1 +\eps\sfh_0,
\end{align*}
which is the right hand side of \eqref{eq-gBarh-(gin)h}.
\QED\end{proof}

\subsection{$\ZZ\times\NN$-graded $\kk$-modules}
Let $\cw$ be the monoidal category of $\ZZ\times\NN$-graded abelian groups.
Its object $X$ is a collection \((X^p_i)^{p\in\ZZ}_{i\in\NN}\), \(X^p_i\in\Ob\Ab\).
The tensor product is given by
\[ (\sS{_1}X\tdt\sS{_n}X)^p_i =\bigsqcup^{p_1+\dots+p_n=p}_{i_1+\dots+i_n=i} \sS{_1}X^{p_1}_{i_1}\tdt\sS{_n}X^{p_n}_{i_n}.
\]
The monoidal category $\cw$ is equipped with the symmetry
\begin{equation}
c: X^p_i\tens Y^q_j \ni x\tens y \mapsto (-1)^{pq}y\tens x \in Y^q_j\tens X^p_i.
\label{eq-c:XpiOYqj}
\end{equation}
Let $\kk$ be a $\ZZ\times\NN$-graded commutative ring, that is, a commutative algebra in $\cw$.
Denote by $\cv=\kk\modul$ the category of $\ZZ\times\NN$-graded $\kk$\n-modules, that is, commutative $\kk$\n-bimodules in $\cw$.
It has the tensor product $\tens_\kk$ and the symmetry inherited from $\cw$ as in \eqref{eq-c:XpiOYqj}.
There are epimorphisms \((\sS{_1}X\tdt\sS{_n}X)^p_i\rEpi (\sS{_1}X\tens_\kk\dots\tens_\kk\sS{_n}X)^p_i\).

\begin{lemma}\label{lem-e:XT-isomorphism}
Let $X\in\Ob\cv$, $X^p_0=0$ for all $p\in\ZZ$.
Then the homotopy coalgebra morphism \(e:XT\to X\tilde T\) of \exaref{exa-e:XT-tildeT} is an isomorphism.
\end{lemma}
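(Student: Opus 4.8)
The plan is to check that each component $e(I)\colon XT^{\tens I}\to X\tilde T(I)$ is an isomorphism in $\cv$; since $e$ is already a morphism of homotopy coalgebras, its componentwise inverses automatically assemble into a morphism of homotopy coalgebras, as in \lemref{lem-lax-coalgebra-C-to-G}, so that $e$ becomes an isomorphism in $\hCoalg$.

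First I would unwind $e(I)$ explicitly. Since $\tens$ preserves countable coproducts, $XT^{\tens I}=\coprod_{m\in\NN^I}X^{\tens m_1}\tdt X^{\tens m_I}$, while $X\tilde T(I)=\prod_{n\in\NN^I}X^{\tens\|n\|}$ by \exaref{exa-lax-comonoid-tilde-T}. By the defining formula~\eqref{eq-e(I)prn}, on the summand indexed by $m$ the composite $e(I)\pr_n$ equals $\lambda^{\sqcup_{i\in I}n_i\to I}\circ(\tens^{i\in I}\pr_{n_i})$, and $\pr_{n_i}$ kills $X^{\tens m_i}$ unless $m_i=n_i$. Hence $e(I)$ is the ``diagonal'' comparison morphism whose $(m,n)$-component is the reassociation isomorphism $\lambda^{\sqcup_{i\in I}m_i\to I}\colon X^{\tens m_1}\tdt X^{\tens m_I}\to X^{\tens\|m\|}$ when $n=m$ and $0$ otherwise.

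The crux is the connectivity hypothesis. I would first record that $X^p_0=0$ for all $p$ forces $(X^{\tens k})^p_i=0$ whenever $i<k$: every tensor factor lives in internal degree $\ge1$, and the relations defining $\tens_\kk$ respect the internal grading, so a nonzero homogeneous element of $X^{\tens k}$ has internal degree at least $k$. Consequently, for a fixed bidegree $(p,i)$ the factor $(X^{\tens\|n\|})^p_i$ vanishes unless $\|n\|\le i$, and the set $\{n\in\NN^I:\|n\|\le i\}$ is finite. Thus in bidegree $(p,i)$ both the coproduct $XT^{\tens I}$ and the product $X\tilde T(I)$ reduce to the same finite (co)product indexed by $\{m:\|m\|\le i\}$, over which coproduct and product agree. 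In that bidegree $e(I)$ becomes the diagonal endomorphism of a finite direct sum whose diagonal entries are the invertible maps $\lambda$, hence an isomorphism.

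Since isomorphisms in $\cv=\kk\modul$ are detected bidegreewise, $e(I)$ is an isomorphism for every $I$, and the reduction above finishes the argument. The only genuine obstacle is the passage from coproduct to product: a priori the canonical map $\coprod_n\to\prod_n$ is not invertible over an infinite index set, and it is precisely the finiteness of $\{n:\|n\|\le i\}$ supplied by $X^p_0=0$ that collapses the product to a finite sum in each bidegree and thereby makes $e(I)$ bijective.
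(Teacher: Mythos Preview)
Your proof is correct and follows essentially the same approach as the paper: both arguments reduce to showing that in each bidegree $(p,i)$ only the finitely many summands with $\|n\|\le i$ survive, so that the coproduct and product coincide and $e(I)$ is an isomorphism. Your exposition is slightly more detailed (explicitly describing the diagonal structure of $e(I)$ and invoking \lemref{lem-lax-coalgebra-C-to-G} to assemble the inverses), but the substance is the same.
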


\begin{proof}
The morphisms \(e(I):V=XT^{\tens_\kk I}\to\prod_{n\in\NN^I}X^{\tens_\kk\|n\|}=W\) given by \eqref{eq-e(I)prn} embed $V$ into $W$.
Here
\[ V^p_j =\coprod_{n\in\NN^I} (X^{\tens_\kk\|n\|})^p_j, \qquad W^p_j =\prod_{n\in\NN^I} (X^{\tens_\kk\|n\|})^p_j
\]
for all $p\in\ZZ$, $j\in\NN$.
Notice that \((X^{\tens\|n\|})^p_j=0\) if $\|n\|>j$, hence, \((X^{\tens_\kk\|n\|})^p_j=0\) for such $n$.
Therefore,
\[ V^p_j =\bigoplus^{\|n\|\le j}_{n\in\NN^I} (X^{\tens_\kk\|n\|})^p_j =W^p_j
\]
and the morphisms $e(I)$ are isomorphisms.
\QED\end{proof}

Denote by $\chCoalg_+$ the full subcategory of $\chCoalg$ formed by curved homotopy coalgebras \((X\tilde T,\delta_1,\delta_0)\) with $X_0^\bullet=0$.
Denote by $\ucCoalg_+$ the full subcategory of $\ucCoalg$ formed by counit-complemented curved coalgebras $C$ with $\bar C_0^\bullet=0$.

\begin{proposition}
There is a full embedding \(\chCoalg_+\rMono \ucCoalg_+\).
\end{proposition}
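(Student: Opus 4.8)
The plan is to build the embedding out of the isomorphism $e\colon XT\to X\tilde T$ supplied by \lemref{lem-e:XT-isomorphism}, together with the full faithfulness of $\Coalg\to\hCoalg$ (\propref{pro-functor-Coalg-lCoalg}). On objects, recall that an object of $\chCoalg$ is by definition a pair $(\delta_1,\delta_0)$ making $(XT,\delta_2,\delta_1,\delta_0,\pr_0,\inj_0)$ a counit-complemented curved coalgebra; I would send it to precisely this object of $\ucCoalg$. The hypothesis $X_0^\bullet=0$ forces the kernel $\bar C=\Ker\pr_0=\coprod_{n\ge1}X^{\tens n}$ of $C=XT$ to satisfy $\bar C_0^\bullet=0$, so the image lies in $\ucCoalg_+$; since $X$ is recovered from $XT$, the assignment is injective on objects.

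Next, for a $\chCoalg$-morphism $(\sfg_1,\sfg_0)\colon(X,\delta_1,\delta_0)\to(Y,\delta_1,\delta_0)$ I would set $\mathsf G_0=\sfg_0$ and define $\mathsf G_1$ by transporting $\sfg_1$ through the isomorphisms $e_X,e_Y$, which are available because $X_0^\bullet=Y_0^\bullet=0$: the composite $e_X\sfg_1 e_Y^{-1}\colon XT\to YT$ is a morphism of $\hCoalg$ between the images of the ordinary coalgebras $XT$ and $YT$, hence by \propref{pro-functor-Coalg-lCoalg} it is the image of a unique (counital) coalgebra morphism $\mathsf G_1\colon XT\to YT$. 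It then remains to verify the curved-morphism equations \eqref{eq-deltas-gees} for $(\mathsf G_1,\mathsf G_0)$. Here I would use that $e$ intertwines the structure, namely $e_X\tilde\delta_1=\delta_1 e_X$ (the square of \exaref{exa-(fg)-coderivations-exi-xie} with $f=g=\id$) and $e_X(1)\tilde\delta_0=\delta_0$ (immediate from $e_X(1)\pr_k=\pr_k$), so that conjugating by $e$ the defining identities \eqref{eq-del1g1}--\eqref{eq-delta0-delta1gee0}, equivalently their forms \eqref{eq-del1e(1)g1(1)}--\eqref{eq-delta0-delta1gee0-e(1)g1(1)}, yields exactly \eqref{eq-deltas-gees}; the identity $e_X(1)\sfg_1(1)=\mathsf G_1 e_Y(1)$ matches the two right-hand sides. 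Since $\ucCoalg$-morphisms ignore the splittings $\pr_0,\inj_0$, this makes $(\mathsf G_1,\mathsf G_0)$ a morphism of $\ucCoalg_+$. Functoriality is then formal: the $\chCoalg$ composition law $\sfh_0=\sff_0+e(1)\sff_1(1)\tilde\sfg_0$ transports under $e$ to the $\cCoalg$ law $\mathsf H_0=\mathsf F_0+\mathsf F_1\mathsf G_0$, and identities go to identities.

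Faithfulness is clear, since $\sfg_1\mapsto\mathsf G_1=e_X\sfg_1 e_Y^{-1}$ is a bijection onto coalgebra morphisms and $\mathsf G_0=\sfg_0$. For fullness I would run the construction backwards, sending $(\mathsf G_1,\mathsf G_0)\in\ucCoalg_+(XT,YT)$ to $\sfg_1=e_X^{-1}\mathsf G_1 e_Y$ and $\sfg_0=\mathsf G_0$; the equivalence of equations established above shows these satisfy the $\chCoalg$-conditions.

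The hard part will be the last step. A $\chCoalg$-morphism is required to arise from a map $\check\sfg_1\colon XT\to Y$ of \emph{finite} support (\propref{pro-f-XT-Y-defines-f}), whereas an arbitrary curved coalgebra morphism $\mathsf G_1\colon XT\to YT$ imposes no such global bound; so the crux of fullness is to see that $\sfg_1=e_X^{-1}\mathsf G_1 e_Y$ genuinely lands in $\fhCoalg$. This is exactly where the $\ZZ\times\NN$-grading is used: as in the proof of \lemref{lem-e:XT-isomorphism}, $X_0^\bullet=0$ makes $(X^{\tens\|n\|})_j^p$ vanish whenever $\|n\|>j$, so that $X\tilde T(I)$ coincides with $XT^{\tens_\kk I}$ in each bidegree and every sum in \eqref{eq-XT(I)-f(I)-YT(I)-pr-Y} is finite in each bidegree regardless of global support. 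Thus on the full subcategories $\chCoalg_+,\ucCoalg_+$ the finiteness hypotheses become automatic, the constructions of \propref{pro-f-XT-Y-defines-f} and \propref{pro-fgxi-XT-Y} are defined for every $\check\sfg_1$, and $\fhCoalg$ reduces to the image of $\Coalg$. I would therefore devote the bulk of the write-up to this degreewise-finiteness verification, the remaining equation-matching being routine given the intertwining relations above.
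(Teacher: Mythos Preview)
Your approach coincides with the paper's: send $(X\tilde T,\delta_1,\delta_0)$ to the counit-complemented curved coalgebra $(XT,\delta_2,\delta_1,\delta_0,\pr_0,\inj_0)$, transport morphisms through the isomorphisms $e_X,e_Y$ together with the full faithfulness of $\Coalg\to\hCoalg$, and match the defining equations \eqref{eq-del1g1}--\eqref{eq-delta0-delta1gee0} with \eqref{eq-deltas-gees} via the intertwining relations for $e$. The paper's proof is terser but follows exactly this path.

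You go beyond the paper in isolating the finite-support requirement as the crux of fullness; the paper's proof does not address this point explicitly. Your observation that the sums in \eqref{eq-XT(I)-f(I)-YT(I)-pr-Y} are degreewise finite when $X_0^\bullet=0$ is correct and shows that the \emph{formula} for $\hat f$ makes sense for an arbitrary $f\colon XT\to Y$. However, this does not literally place the resulting morphism in $\fhCoalg$ as defined (that category demands a single global bound $K$ on the support of $f$), and the same issue applies to $\sfg_0$. What your argument really establishes is that, on objects with $X_0^\bullet=Y_0^\bullet=0$, the finite-support hypotheses in the definitions of $\fhCoalg$ and $\chCoalg$ may be dropped without changing anything---every relevant sum is already finite in each bidegree and $e$ is an isomorphism---so that $\chCoalg_+$ effectively has all $\hCoalg$ morphisms between its objects. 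With that (tacit) relaxation the fullness follows; the paper appears to take this for granted, and your write-up would benefit from stating it plainly rather than saying the finiteness ``becomes automatic''.
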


\begin{proof}
The functor to be constructed takes a curved homotopy coalgebra \((X\tilde T,\delta_1,\delta_0)\) to the counit-complemented curved coalgebra \((XT,\delta_2,\delta_1,\delta_0,\pr_0,\inj_0)\).
A morphism of curved homotopy coalgebras \((\tilde\sfg_1,\sfg_0):X\tilde T\to Y\tilde T\) with $Y_0^\bullet=0$ is taken to \((\sfg_1,\sfg_0):XT\to YT\), where \(\sfg_1:XT\to YT\in\Coalg\) is the only morphism equal to
\[ \bigl( XT \rTTo^{e_X} X\tilde T \rTTo^{\tilde\sfg_1} Y\tilde T \rTTo^{e_X^{-1}} YT \bigr) \in \hCoalg,
\]
see \lemref{lem-e:XT-isomorphism} and \propref{pro-functor-Coalg-lCoalg}.
In fact, postcomposing the first equation of \eqref{eq-deltas-gees} with $e_Y$ and precomposing \eqref{eq-del1g1} with $e_X$ we get equivalent equations.
Precomposing \eqref{eq-delta0-delta1gee0} with $e_X$ we get the second equation of \eqref{eq-deltas-gees}.
Therefore, the assignment on morphisms is bijective.
Identity morphisms go to identity morphisms.
Clearly the composition is preserved.
\QED\end{proof}

Denote by $\ucAlg_+$ the full subcategory of $\ucAlg$ formed by unit-complemented curved algebras $A$ with $\bar A_0^\bullet=0$.
For such $A$ the bar-construction $\Bbar A$ is in $\chCoalg_+$, so we may compose:
\[ \Bbar_+ \overset{\text{def}}= \bigl( \ucAlg_+ \rTTo^\Bbar \chCoalg_+ \rMono \ucCoalg_+ \bigr).
\]
Restricting $\Cobar$ we get \(\Cobar_+=\Cobar\big|:\ucCoalg_+\to\ucAlg_+\).

\begin{theorem}
The functors \(\Cobar_+:\ucCoalg_+\leftrightarrows\ucAlg_+:\Bbar_+\) are adjoint to each other.
\end{theorem}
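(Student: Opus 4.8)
The plan is to assemble the bijections already available into a single natural isomorphism
\[ \ucAlg_+(\Cobar_+C,A) \cong \ucCoalg_+(C,\Bbar_+A), \qquad C\in\ucCoalg_+,\ A\in\ucAlg_+, \]
which is exactly the assertion that $\Cobar_+$ is left adjoint to $\Bbar_+$. Since morphisms in $\ucAlg_+$ and $\ucCoalg_+$ are those of $\cAlg$, $\cCoalg$ (the splittings being ignored) and $\Cobar_+C=\Cobar C$, the first two links are supplied by restricting the bimodule isomorphisms \eqref{eq-cAlg(CobarCA)-Tw(CA)} and \eqref{eq-Tw(CA)-CCoalg(CBarA)} to the full subcategories:
\[ \ucAlg_+(\Cobar_+C,A)=\cAlg(\Cobar C,A)\cong\Tw(C,A)\cong\CCoalg(C,\Bbar A). \]
It then remains to identify $\CCoalg(C,\Bbar A)$ with $\ucCoalg_+(C,\Bbar_+A)$ and to verify compatibility with the left $\ucCoalg_+$- and right $\ucAlg_+$-actions all along the chain.

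For this final link I would use that $A\in\ucAlg_+$ makes $\bar A[1]$ satisfy the hypothesis of \lemref{lem-e:XT-isomorphism} (its degree-zero $\NN$-component vanishes), so that $e=e_{\bar A[1]}\colon\bar A[1]T\to\bar A[1]\tilde T$ is an isomorphism of homotopy coalgebras, and $\Bbar_+A$ is precisely the counit-complemented curved coalgebra $(\bar A[1]T,\delta_2,\delta_1,\delta_0,\pr_0,\inj_0)$ carried by $e$ to $\Bbar A=\bar A[1]\tilde T$. On first components, postcomposition with $e$ together with the full-faithfulness of $\Coalg\hookrightarrow\hCoalg$ (\propref{pro-functor-Coalg-lCoalg}) gives a bijection between coalgebra morphisms $\sfg_1'\colon C\to\bar A[1]T$ and homotopy coalgebra morphisms $\sfg_1\colon C\to\bar A[1]\tilde T$, with $\sfg_1=\sfg_1'e$ and $\sfg_1(1)=\sfg_1'e(1)$. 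Keeping the same functional $\sfg_0\colon C\to\1$, I would check that the defining equations \eqref{eq-g1delta1-CXT}, \eqref{eq-g1delta0-C1} of $\CCoalg(C,\Bbar A)$ are equivalent to the curved coalgebra morphism equations \eqref{eq-deltas-gees} for $(\sfg_1',\sfg_0)$. The decisive identities are $\delta_1e=e\tilde\delta_1$ (both $\delta_1$ and $\tilde\delta_1$ come from $\check\delta_1$, so $e$ intertwines them by the commuting square of \exaref{exa-(fg)-coderivations-exi-xie}) and $e(1)\tilde\delta_0=\delta_0$ (since $\tilde\delta_0$ extends $\delta_0$ by zero along $e(1)$); postcomposing \eqref{eq-deltas-gees} with the isomorphism $e$ turns it into \eqref{eq-g1delta1-CXT}, \eqref{eq-g1delta0-C1}, and conversely. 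One also records that $\delta^C_2(\sfg_0\tens1-1\tens\sfg_0)\sfg_1=\delta^C_2(\sfg_0\tens\sfg_1-\sfg_1\tens\sfg_0)$ (no sign, $\sfg_1$ being of degree $0$), so the two shapes of the first equation agree.

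For naturality I would verify that this last bijection is a homomorphism for both actions: the left $\ucCoalg_+$-action is precomposition in $C$ on both sides and is manifestly preserved, while the right $\ucAlg_+$-action on $\CCoalg(C,\Bbar-)$ is transported along $\Bbar$ and on $\ucCoalg_+(C,\Bbar_+-)$ along $\Bbar_+=(\chCoalg_+\hookrightarrow\ucCoalg_+)\circ\Bbar$; since the bijection is realized by the fixed isomorphism $e$ and the full embedding is a functor, these coincide. Combined with the fact that \eqref{eq-cAlg(CobarCA)-Tw(CA)} and \eqref{eq-Tw(CA)-CCoalg(CBarA)} are already bimodule isomorphisms, the whole composite is natural in $C$ and $A$, giving the adjunction. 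The main obstacle is exactly this last link: establishing $\CCoalg(C,\Bbar A)\cong\ucCoalg_+(C,\Bbar_+A)$ for an \emph{arbitrary} curved coalgebra $C\in\ucCoalg_+$, not merely for $C$ in the image of the embedding. This is close to, but not literally contained in, the proof that $\chCoalg_+\hookrightarrow\ucCoalg_+$ is a full embedding (there the source is also of the form $XT$); the computation is however the same in spirit and reduces to the two intertwining identities displayed above.
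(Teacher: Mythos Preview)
Your proposal is correct and follows essentially the same route as the paper. Both arguments compose the established bimodule isomorphisms $\cAlg(\Cobar C,A)\cong\Tw(C,A)\cong\CCoalg(C,\Bbar A)$ with a further identification $\CCoalg(C,\Bbar A)\cong\cCoalg(C,\Bbar_+A)$ obtained via the isomorphism $e_{\bar A[1]}$. The paper packages this last step slightly differently: it observes that $(e_X,0)\in\CCoalg(XT,X\tilde T)$ and defines the map $p$ as the left $\cCoalg$-action by this element, which makes naturality in $C$ automatic and reduces the verification to the same two intertwining identities $\delta_1e=e\tilde\delta_1$ and $e(1)\tilde\delta_0=\delta_0$ that you check directly. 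Your concern about the ``obstacle'' is unwarranted: nothing in those identities depends on $C$ being of the form $XT$, so the argument works for arbitrary $C\in\ucCoalg_+$.
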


\begin{proof}
For any \(X\tilde T\in\chCoalg_+\) there is an element \((e_X,0)\in\CCoalg(XT,X\tilde T)\).
Map~\eqref{eq-CCoalg(DXT)-CCoalg(CXT)} in the form \(\cCoalg(C,XT)\times\CCoalg(XT,X\tilde{T})\to\CCoalg(C,X\tilde{T})\) restricted to $(e_X,0)$ leads to
\begin{align*}
p: \cCoalg(C,XT) &\longrightarrow \CCoalg(C,X\tilde{T}),
\\
(\sff_1,\sff_0) &\longmapsto (\sff_1,\sff_0) \cdot (e_X,0) =(\sff_1e_X,\sff_0).
\end{align*}
By \lemref{lem-e:XT-isomorphism} this map admits an inverse
\begin{align*}
q: \CCoalg(C,X\tilde{T}) &\longrightarrow \cCoalg(C,XT),
\\
(\tilde\sfg_1,\sfg_0) &\longmapsto (\sfg_1=\tilde\sfg_1 e_X^{-1},\sfg_0).
\end{align*}
Therefore, $p$ is a bijection.

By construction $p$ is natural in $C\in\cCoalg$.
It is natural also in \(X\tilde T\in\chCoalg_+\) by an easy computation based on the relation \(\sfh_1e_X=e_X\tilde\sfh_1:XT\to Y\tilde T\in\hCoalg\) for an arbitrary \(\tilde\sfh_1:X\tilde T\to Y\tilde T\in\hCoalg\), \(X_0^\bullet=Y_0^\bullet=0\).
(Note that \(\sfh_1:XT\to YT\) may have non-trivial component \(\sfh_{1;0}:\1\to Y\).)
As a corollary we get a bijection \(\cCoalg(C,\Bbar_+A)\to\CCoalg(C,\Bbar A)\) natural in $C\in\cCoalg$, \(A\in\ucAlg_+\).
Combining it with the top row of diagram~\eqref{dia-C-1T-A} we obtain a bijection \(\ucAlg(\Cobar C,A)\rto\sim \ucCoalg(C,\Bbar_+A)\) natural in $C\in\ucCoalg$, \(A\in\ucAlg_+\).
Assuming additionally that \(C\in\ucCoalg_+\) we deduce the adjunction stated in the theorem.
\QED\end{proof}

\ifx\chooseClass1
\bibliographystyle{spmpsci}
\bibliography{yuri}
	\else
\bibliographystyle{amsalpha}
\providecommand{\bysame}{\leavevmode\hbox to3em{\hrulefill}\thinspace}
\providecommand{\MR}{\relax\ifhmode\unskip\space\fi MR }
% \MRhref is called by the amsart/book/proc definition of \MR.
\providecommand{\MRhref}[2]{%
  \href{http://www.ams.org/mathscinet-getitem?mr=#1}{#2}
}
\providecommand{\href}[2]{#2}

\fi

\tableofcontents

\end{document}